\theoremstyle{plain}
\newtheorem{theorem}{Theorem}[section]
\newtheorem{lemma}[theorem]{Lemma}
\newtheorem{corollary}[theorem]{Corollary}
\theoremstyle{definition}
\theoremstyle{remark}
\newtheorem*{remark*}{Remark}
\numberwithin{equation}{section}
\newcommand\CC{{\mathbb C}}
\newcommand\RR{{\mathbb R}}
\newcommand\ZZ{{\mathbb Z}}
\newcommand\NN{{\mathbb N}}
\newcommand\PP{{\mathbb P}}
\newcommand\xx{{\mathbf x}}
\newcommand\zz{{\mathbf z}}
\newcommand\yy{{\mathbf y}}
\newcommand\dgr{\operatorname{deg}}
\newcommand\sign{\operatorname{sign}}
\newcommand*\pFqskip{8mu}
\newcommand*\pFq{\begingroup
        \catcode`\,\active
        \def ,{\mskip\pFqskip\relax}%
        \dopFq
}
\def\dopFq#1#2#3#4#5{%
        {}_{#1}F_{#2}\biggl(\genfrac..{0pt}{}{#3}{#4};#5\biggr)%
        \endgroup
}
\title[Wronskian type determinants of orthogonal polynomials]{Wronskian type determinants of orthogonal polynomials, Selberg type formulas and constant
term identities}
\author{Antonio J. Dur\'an}
\address{A. J. Dur\'an \\
Departamento de An\'{a}lisis Matem\'{a}tico \\
Universidad de Sevilla \\
Apdo (P. O. BOX) 1160\\
41080 Sevilla. Spain.}
\email{duran@us.es }
\thanks{Partially supported by MTM2012-36732-C03-03 (Ministerio de Economía y Competitividad),
FQM-262, FQM-4643, FQM-7276 (Junta de Andalucía) and Feder Funds (European
Union).}
\subjclass{42C05, 33C45}
\keywords{Orthogonal polynomials. Classical polynomials. Discrete classical polynomials. Casorati determinants. Selberg formulas. Constant
term identities.}
   \date{}
\begin{document}

\begin{abstract}
Let $(p_n)_n$ be a sequence of orthogonal polynomials with respect to the measure $\mu$. Let $T$ be a linear operator acting in the linear space
of polynomials $\PP$ and satisfying that $\dgr(T(p))=\dgr(p)-1$, for all polynomial $p$. We then construct a sequence of polynomials $(s_n)_n$,
depending on $T$ but not on $\mu$, such that the Wronskian type $n\times n$ determinant
$\det \left( T^{i-1}(p_{m+j-1}(x))\right)_{i,j=1}^n$ is equal to the $m\times m$ determinant
$\det \left( q^{j-1}_{n+i-1}(x)\right)_{i,j=1}^m$ , up to multiplicative constants, where the polynomials $q_n^i$, $n,i\ge 0$, are defined by
$q_n^i(x)=\sum_{j=0}^n\mu _j^is_{n-j}(x)$, and $\mu_j^i$ are certain generalized moments of the measure $\mu$.
For $T=d/dx$ we recover a Theorem by Leclerc which extends the well-known Karlin and Szeg\H o identities for Hankel determinants whose entries are
ultraspherical, Laguerre and Hermite polynomials. For $T=\Delta$, the first order difference operator, we get some very elegant symmetries for
Casorati determinants of classical discrete orthogonal polynomials. We also show that for certain operators $T$, the second determinant above can
be rewritten in terms of Selberg type integrals, and that for certain operators $T$ and certain families of orthogonal polynomials $(p_n)_n$, one
(or both) of these determinants can also be rewritten as the constant term of certain multivariate Laurent expansions.
\end{abstract}

  \maketitle

\section{Introduction and results}
Determinants whose entries are orthogonal polynomials is a long studied subject. One can mention Turán inequality for Legendre polynomials \cite{Tu} and its generalizations, specially that of Karlin and Szeg\H o on Hankel determinants whose entries are ultraspherical, Laguerre, Hermite, Charlier, Meixner, Krawtchouk and other families of orthogonal polynomials (\cite{KS}). Karlin and Szeg\H o's strategy was to express these Hankel determinants in terms of the Wronskian of certain orthogonal polynomials of another class. However the proofs of Karlin and Szeg\H o did not clearly show what in their formulas resulted from a general
algebraic transformation, and what in contrast was due to some particular
properties of the orthogonal polynomials under consideration. Leclerc (\cite{Le2}) clarified this by
stating that a Wronskian of orthogonal polynomials is proportional to a Hankel determinant whose elements
form a new sequence of polynomials (not necessarily orthogonal). More precisely, let $(a_n)_n$ be a sequence of numbers. With this sequence Leclerc associated two classes of polynomials, $p_n$, $q_n$, $n\ge 0$, defined by
\begin{align}\label{defpnl}
p_n(x)&=\left|\begin{matrix}a_0&a_1&\cdots &a_{n-1}& 1\\
a_1&a_2&\cdots &a_{n}& x\\
\vdots&\vdots& &\vdots&\vdots\\
a_{n-1}&a_n&\cdots &a_{2n-2}&x^{n-1}\\
a_n&a_{n+1}&\cdots &a_{2n-1}& x^n\\
\end{matrix} \right|, \\\label{defqnl}
q_n(x)&=\sum_{j=0}^na_j\binom{n}{j}(-x)^{n-j}.
\end{align}
It is well-known that if $a_n$, $n\ge 0$, are the moments of a non-degenerate positive measure $\mu$, $a_n=\int t^nd\mu$, the polynomials $(p_n)_n$ are then orthogonal with respect to $\mu$.
\begin{theorem}\label{tli} (Theorem 1 of \cite{Le2})
The following identity holds for all integer values of $n$ and $m$, $n,m\ge 1$,
\begin{equation}\label{leci}
\det \left( p^{(i-1)}_{m+j-1}(x)\right)_{i,j=1}^n=C_{n,m}\det \left( q_{n+i+j-2}(x)\right)_{i,j=1}^m ,
\end{equation}
where $C_{n,m}$ is independent of $x$:
$$
C_{n,m}=(-1)^{nm}\prod_{k=1}^{n-1}k!
\det \left( a_{i+j-2}\right)_{i,j=1}^{k+m}.
$$
\end{theorem}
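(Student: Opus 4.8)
The plan is to reduce both determinants in \eqref{leci} to one and the same multiple integral. First I would observe that the entries of both determinants are polynomials in $x$ and in $a_0,a_1,\dots$, hence so are both sides of \eqref{leci}; since the positive-definite moment sequences form a set with nonempty interior, it is enough to prove \eqref{leci} when $a_n=\int t^n\,d\mu$ for a positive measure $\mu$ of infinite support, the general case following by polynomial continuation. Then $(p_n)_n$ is an orthogonal polynomial sequence for $\mu$; writing $D_k=\det(a_{i+j-2})_{i,j=1}^k$ and $V(u_1,\dots,u_N)=\prod_{1\le i<j\le N}(u_i-u_j)$, Heine's formula reads
\[
p_N(x)=\frac1{N!}\int_{\RR^N}V(u)^2\prod_{k=1}^N(x-u_k)\,d\mu(u_1)\cdots d\mu(u_N),
\]
while \eqref{defqnl} says precisely $q_N(x)=\int(t-x)^N\,d\mu(t)$.

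Next I would dispose of the right-hand side of \eqref{leci}. Writing $(t-x)^{n+i+j-2}=(t-x)^n(t-x)^{i-1}(t-x)^{j-1}$ and applying the Andr\'{e}ief--Heine identity $\det\big(\int f_i(t)g_j(t)\,d\nu(t)\big)_{i,j=1}^m=\frac1{m!}\int_{\RR^m}\det(f_i(t_k))\det(g_j(t_k))\,d\nu(t_1)\cdots d\nu(t_m)$ with $d\nu(t)=(t-x)^n\,d\mu(t)$ and $f_i=g_i=(t-x)^{i-1}$, the two inner determinants become Vandermonde determinants in the shifted variables $t_k-x$, and we get
\[
\det\big(q_{n+i+j-2}(x)\big)_{i,j=1}^m=\frac{(-1)^{nm}}{m!}\int_{\RR^m}V(t)^2\prod_{k=1}^m(x-t_k)^n\,d\mu(t_1)\cdots d\mu(t_m).
\]
(The case $m=1$, applied also to $p_n$, already gives \eqref{leci} for $n=1$: $p_m(x)=(-1)^m\det(q_{i+j-1}(x))_{i,j=1}^m$.)

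The heart of the matter is the companion formula for the Wronskian,
\begin{equation}\label{crux}
\det\big(p^{(i-1)}_{m+j-1}(x)\big)_{i,j=1}^n=\Big(\prod_{k=1}^{n-1}k!\Big)\Big(\prod_{k=1}^{n-1}D_{m+k}\Big)\frac1{m!}\int_{\RR^m}V(t)^2\prod_{k=1}^m(x-t_k)^n\,d\mu(t_1)\cdots d\mu(t_m);
\end{equation}
granting this, division by the preceding display yields \eqref{leci} with $C_{n,m}=(-1)^{nm}\prod_{k=1}^{n-1}k!\,D_{m+k}$, as claimed. To prove \eqref{crux} I would expand its left-hand side by the Cauchy--Binet formula in the monomial basis: writing $p_{m+j-1}(x)=\sum_k c_{m+j-1,k}x^k$, where (by expanding \eqref{defpnl} along its last column) $c_{N,k}$ is, up to sign, a minor of the Hankel matrix $(a_{i+j-2})$, one obtains the left-hand side of \eqref{crux} as a sum over $n$-element subsets $\{k_1<\dots<k_n\}$ of $\{0,\dots,m+n-1\}$ of a monomial Wronskian, equal to $V(k_1,\dots,k_n)\,x^{k_1+\cdots+k_n-\binom{n}{2}}$, times an $n\times n$ minor of the array $(c_{N,k})$; its triangular shape forces $k_l\le m+l-1$, and Jacobi's complementary-minor theorem turns each surviving minor of $(c_{N,k})$ back into a minor of $(a_{i+j-2})$. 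On the other side, expanding $\prod_{k=1}^m(x-t_k)^n$ in powers of $x$, decomposing the symmetric-function coefficients into Schur functions $s_\lambda(t)$, and using the classical evaluation of the $V(t)^2$-average of $s_\lambda$ as a minor of the Hankel matrix $(a_{i+j-2})$ (which reduces to $D_m$ when $\lambda=\varnothing$), one writes the right-hand side of \eqref{crux} as a sum of the very same Hankel minors. The step I expect to be the main obstacle is the purely algebraic symmetric-function identity that matches these two expansions term by term --- a computation of dual-Cauchy / Jacobi--Trudi type. The value of the constant, finally, is pinned down by comparing the coefficients of $x^{nm}$ on the two sides of \eqref{crux}: since every nonvanishing product in the Wronskian contributes in degree exactly $nm$, that coefficient equals the determinant of leading coefficients $\det\big(D_{m+j-1}\,(m+j-1)!/(m+j-i)!\big)_{i,j=1}^n$ (with $1/\ell!:=0$ for $\ell<0$), which works out to $\big(\prod_{k=1}^{n-1}k!\big)\prod_{j=1}^nD_{m+j-1}$, whereas on the right it equals $\big(\prod_{k=1}^{n-1}k!\big)\big(\prod_{k=1}^{n-1}D_{m+k}\big)D_m$; the two agree.
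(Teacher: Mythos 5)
Your overall strategy is sound in outline and genuinely different from the paper's: the paper does not prove Theorem \ref{tli} by integrals at all, but deduces it (and its generalization, Theorem \ref{mti}) from Turnbull/Plücker-type tableau identities, and only afterwards derives the integral representation you want to use as the engine --- your companion formula for the Wronskian is precisely Theorem \ref{tise} (equivalently the first identity of the Corollary in Section 6) specialized to $T=d/dx$, which in this paper is a \emph{consequence} of Theorem \ref{mti}, so it cannot be quoted here without circularity. Your preliminary steps are fine: the reduction to positive-definite moment sequences by polynomial continuation, the identity $q_N(x)=\int(t-x)^N\,d\mu(t)$, and the Andr\'eief computation giving $\det(q_{n+i+j-2}(x))_{i,j=1}^m=\frac{(-1)^{nm}}{m!}\int\Lambda^2(\mathbf{t})\prod_k(x-t_k)^n\,d\mu(\mathbf{t})$ are all correct, and the constants do match the claimed $C_{n,m}$ once the Wronskian--integral formula is granted.

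The genuine gap is that this Wronskian--integral formula carries the entire content of the theorem, and your argument for it is not a proof. Two specific points fail or are left open. First, after the Cauchy--Binet expansion the quantities $\det\left(c_{m+j-1,k_l}\right)_{j,l=1}^n$ have entries which are cofactors (maximal minors) of Hankel arrays of \emph{different orders} $m+j-1$, one order per row; Jacobi's complementary-minor theorem applies to minors of the adjugate of a single fixed square matrix and does not convert such a mixed-order determinant into one minor of $(a_{i+j-2})$. Handling exactly this kind of bookkeeping --- products and alternating sums of maximal minors taken from staggered Hankel data --- is what the Turnbull identities of Section 3 (Leclerc's method, reproduced in the proof of Theorem \ref{mth}) are for, so the step you compress into one sentence is where the real work lives. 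Second, you yourself flag the term-by-term matching of the Cauchy--Binet expansion with the Schur-function expansion of $\frac1{m!}\int\Lambda^2(\mathbf{t})\prod_k(x-t_k)^n\,d\mu(\mathbf{t})$ as ``the main obstacle''; after your easy Andr\'eief step this matching is logically equivalent to \eqref{leci} itself, so leaving it open leaves the theorem unproved. Note also that the leading-coefficient comparison at $x^{nm}$ only pins down the constant once proportionality of the two sides is already known; it cannot replace the matching. To complete your route you would need either to carry out the symmetric-function identity in full (a dual-Cauchy/Jacobi--Trudi computation together with the evaluation $\frac1{m!}\int\Lambda^2 s_\lambda\,d\mu^{\otimes m}$ as a generalized Hankel minor), or to prove the Wronskian--integral formula independently, e.g.\ by a confluent Vandermonde/Andr\'eief argument or an iterated Christoffel-type formula, rather than by the Jacobi-minor shortcut as stated.
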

As usual, we take $\prod_{j=n}^mx_j=1$ if $m<n$ and $\det(C)=1$ if the size of $C$ reduces to $0\times 0$.

In \cite{du}, the author has conjectured some identities for Casorati determinants of classical discrete orthogonal polynomials. These conjectures
can be considered as a discrete version of the Karlin and Szeg\H o's identities
for Hankel and Wronskian determinants of ultraspherical, Laguerre and Hermite polynomials.

Here it is one of these conjectures. Let $(\hat c_n^a)_n$, $a\not =0$, be the monic Charlier polynomials (see (\ref{chpol}) below), then for $n,k,m\ge 0$ and $a\in\RR\setminus\{0\}$, we have
\begin{equation}\label{c1.1}
\frac{\det\big(\hat c_{k+j-1}^a(m+i-1)\big)_{i,j=1}^n}{(-a)^{nk}\left( \prod_{j=2}^{n-1}j!\right)}=\frac{\det\big(\hat c_{n+j-1}^{-a}(-k+i-1)\big)_{i,j=1}^m}{a^{nm}\left( \prod_{j=2}^{m-1}j!\right)}.
\end{equation}
In fact, Karlin and Szeg\H o considered in their paper \cite{KS} the determinant in the left hand side of the previous formula (and the corresponding for Meixner, Krawtchouk and Chebyshev), but they were more interested in studying the sign of this determinant that in computing it explicitly.

The purpose of this paper is to show that Leclerc identities for Wronskian of orthogonal polynomials
and author conjectures for Casorati determinants of classical discrete orthogonal polynomials are particular cases of a quite more general result.

Indeed, consider a linear operator $T$ acting in the linear space of polynomials $\PP$ and satisfying that $\dgr(T(p))=\dgr(p)-1$, for all polynomial
$p$. We associate two sequences of polynomials $(r_n)_n$ and $(s_n)_n$ to this operator $T$.
\begin{enumerate}
\item The sequence $(r_n)_n$ satisfies that
\begin{enumerate}
\item $r_0=1$ and the degree of $r_n$ is $n$, $n\ge 0$,
\item $T(r_n)=r_{n-1}$, $n\ge 1$.
\end{enumerate}
It is easy to check that this sequence of polynomials always exists and it is unique if we fix the values of $r_n$, $n\ge 1$, at a given number $x_0$
 (see Section 4 for more details).
\item The sequence $(s_n)_n$ is now defined recursively by $s_0=1$ and
\begin{equation}\label{defsni}
\sum_{j=0}^ns_j(x)r_{n-j}(x)=0.
\end{equation}
\end{enumerate}

We are now ready to establish the main result of this paper.

\begin{theorem}\label{mti}
Consider a linear operator $T$ acting in the linear space of polynomials $\PP$ and satisfying that $\dgr(T(p))=\dgr(p)-1$, for all polynomial
$p$. We associate it the two sequences of polynomials $(r_n)_n$ and $(s_n)_n$ as above.
Let $\mu$ be a measure and consider a sequence $(p_n)_n$ of orthogonal polynomials with respect to $\mu$ (see Section 2 for more details). For a given sequence of polynomials $(\psi_i)_i$, $\psi _i$ of degree $i$, we write $\mu_j^i$, $i,j\ge 0$, for the numbers $\mu_j^i=\int r_j \bar \psi_id\mu$.
We now consider the polynomials $q_n^i$, $i,n\ge 0$,
defined by
\begin{equation}\label{defqny}
q_n^i(x)=\sum_{j=0}^n\mu _j^is_{n-j}(x).
\end{equation}
We then have
\begin{equation}\label{mrby}
\Omega_{m-1}\det \left( T^{i-1}(p_{m+j-1}(x))\right)_{i,j=1}^n=C_{n,m}\det \left( q^{j-1}_{n+i-1}(x)\right)_{i,j=1}^m ,
\end{equation}
where $\Omega_{m-1}$ and $C_{n,m}$ are independent of $x$:
$$
\Omega_{m-1}=\det(\mu_{m-i}^{j-1})_{i,j=1}^{m},\quad C_{n,m}=(-1)^{mn+\binom{m}{2}}\prod_{j=0}^{n-1}\frac{\xi_{m+j}}{\sigma_{m+j}}
$$
and $\xi_n$ and $\sigma_n$ are the coefficients of $x^n$ in the power expansion of $p_n$ and  $r_n$, respectively.
\end{theorem}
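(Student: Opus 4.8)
\emph{Proof plan.} The plan is to write each side of (\ref{mrby}) as the determinant of a product of two rectangular matrices, expand both by the Cauchy--Binet formula, and match the two sums term by term through a pair of orthogonality relations. Expanding $p_l$ in the basis $(r_n)_n$, say $p_l=\sum_{k=0}^{l}a_{l,k}r_k$ (so that $a_{l,l}=\xi_l/\sigma_l$), and using $T^{i-1}(r_k)=r_{k-i+1}$ with the convention $r_k=0$ for $k<0$, one obtains $T^{i-1}(p_{m+j-1})(x)=\sum_{\kappa\ge 0}r_{\kappa-i+1}(x)\,a_{m+j-1,\kappa}$, a sum in which only the indices $\kappa\in\{0,\dots,m+n-1\}$ occur because $a_{l,\kappa}=0$ for $\kappa>l$. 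Hence the left-hand determinant of (\ref{mrby}) equals $\det(\mathcal{R}(x)\mathcal{A})$, where $\mathcal{R}(x)=\bigl(r_{\kappa-i+1}(x)\bigr)_{1\le i\le n,\ 0\le\kappa\le m+n-1}$ and $\mathcal{A}=\bigl(a_{m+j-1,\kappa}\bigr)_{0\le\kappa\le m+n-1,\ 1\le j\le n}$; likewise, from (\ref{defqny}) and $s_k=0$ for $k<0$, the right-hand determinant equals $\det(\mathcal{S}(x)\mathcal{N})$, where $\mathcal{S}(x)=\bigl(s_{n+i-1-k}(x)\bigr)_{1\le i\le m,\ 0\le k\le m+n-1}$ and $\mathcal{N}=\bigl(\mu_k^{j-1}\bigr)_{0\le k\le m+n-1,\ 1\le j\le m}$. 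Denoting by $B[\,\cdot\,,S]$ the submatrix of $B$ on the columns $S$ and by $B[S,\,\cdot\,]$ the submatrix on the rows $S$, Cauchy--Binet yields $\det(\mathcal{R}(x)\mathcal{A})=\sum_{|K|=n}\det\mathcal{R}(x)[\,\cdot\,,K]\,\det\mathcal{A}[K,\,\cdot\,]$ and $\det(\mathcal{S}(x)\mathcal{N})=\sum_{|L|=m}\det\mathcal{S}(x)[\,\cdot\,,L]\,\det\mathcal{N}[L,\,\cdot\,]$, the sums running over subsets of $\{0,\dots,m+n-1\}$.

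The heart of the matter is two exact annihilations. First, $\mathcal{A}^{T}\mathcal{N}=0$: indeed $(\mathcal{A}^{T}\mathcal{N})_{j,i}=\sum_{\kappa}a_{m+j-1,\kappa}\mu_\kappa^{i-1}=\int p_{m+j-1}\,\bar\psi_{i-1}\,d\mu$, which vanishes since $\dgr\psi_{i-1}=i-1<m\le m+j-1$ and $(p_n)_n$ is orthogonal with respect to $\mu$. Second, $\mathcal{R}(x)\mathcal{S}(x)^{T}=0$: its $(i,i')$ entry equals $\sum_{l=0}^{N}r_l(x)s_{N-l}(x)$ with $N=n+i'-i\ge 1$ (because $1\le i\le n$ and $1\le i'\le m$), and this is $0$ by (\ref{defsni}). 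Moreover $\mathcal{A}$ has rank $n$ (its rows indexed by $\{m,\dots,m+n-1\}$ form a triangular block with nonzero diagonal $\xi_{m+j-1}/\sigma_{m+j-1}$), $\mathcal{N}$ has rank $m$ (its rows indexed by $\{0,\dots,m-1\}$ form the Gram-type matrix $(\mu_{i-1}^{j-1})_{i,j=1}^{m}$, nonsingular since $\mu$ admits the orthogonal polynomials $p_0,\dots,p_{m+n-1}$), and $\mathcal{R}(x),\mathcal{S}(x)$ are in row echelon form (pivots $r_0=1$, resp.\ $s_0=1$) of ranks $n$ and $m$. So in $\RR^{m+n}$ the column space of $\mathcal{A}$ is the orthogonal complement of that of $\mathcal{N}$, and the row space of $\mathcal{R}(x)$ is the orthogonal complement of that of $\mathcal{S}(x)$. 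By the (self-contained) lemma that two full-rank matrices $U$ of size $a\times N$ and $V$ of size $b\times N$ with $a+b=N$ and $UV^{T}=0$ have proportional complementary maximal minors --- that is, $\det U[\,\cdot\,,S]=c\,\varepsilon(S)\det V[\,\cdot\,,S^{c}]$ for all $a$-subsets $S$, with $c$ independent of $S$ and $\varepsilon(S)$ the sign of the permutation listing $S$ and then $S^{c}$ in increasing order --- there are constants $\lambda,\nu$, obtained by applying this lemma to $\mathcal{A}^{T},\mathcal{N}^{T}$ and to $\mathcal{R}(x),\mathcal{S}(x)$ respectively, with $\det\mathcal{A}[K,\,\cdot\,]=\lambda\,\varepsilon(K)\det\mathcal{N}[K^{c},\,\cdot\,]$ and $\det\mathcal{R}(x)[\,\cdot\,,K]=\nu\,\varepsilon(K)\det\mathcal{S}(x)[\,\cdot\,,K^{c}]$ for all $n$-subsets $K$ of $\{0,\dots,m+n-1\}$. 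Substituting both into the Cauchy--Binet expansion of the left side, using $\varepsilon(K)^{2}=1$ and reindexing $L=K^{c}$, collapses it to $\lambda\nu\sum_{|L|=m}\det\mathcal{S}(x)[\,\cdot\,,L]\,\det\mathcal{N}[L,\,\cdot\,]=\lambda\nu\det\bigl(q^{j-1}_{n+i-1}(x)\bigr)_{i,j=1}^{m}$.

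Finally one pins down $\lambda$ and $\nu$ by evaluating the two proportionalities at one convenient $K$. With $K=\{0,\dots,n-1\}$ both $\det\mathcal{R}(x)[\,\cdot\,,K]$ and $\det\mathcal{S}(x)[\,\cdot\,,K^{c}]$ reduce to triangular determinants with unit diagonal, hence equal $1$, and $\varepsilon(K)=1$; so $\nu=1$ (in particular $\nu$ does not depend on $x$). With $K=\{m,\dots,m+n-1\}$ the block $\mathcal{A}[K,\,\cdot\,]$ is triangular with diagonal $\xi_{m+j-1}/\sigma_{m+j-1}$, so $\det\mathcal{A}[K,\,\cdot\,]=\prod_{j=0}^{n-1}\xi_{m+j}/\sigma_{m+j}$; meanwhile $\det\mathcal{N}[K^{c},\,\cdot\,]=\det(\mu_{i-1}^{j-1})_{i,j=1}^{m}=(-1)^{\binom{m}{2}}\Omega_{m-1}$ after reversing the $m$ rows, and the permutation listing $\{m,\dots,m+n-1\}$ and then $\{0,\dots,m-1\}$ has exactly $nm$ inversions, so $\varepsilon(K)=(-1)^{nm}$. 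Hence $\lambda=(-1)^{nm+\binom{m}{2}}\,\Omega_{m-1}^{-1}\prod_{j=0}^{n-1}\xi_{m+j}/\sigma_{m+j}=C_{n,m}/\Omega_{m-1}$, and multiplying the identity obtained above by $\Omega_{m-1}$ gives precisely (\ref{mrby}). The least routine ingredient is the complementary-minors lemma, together with the bookkeeping of signs --- the shuffle sign $\varepsilon(K)$, the row-reversal sign, and the inversion count --- that delivers exactly the constant $C_{n,m}$; everything else is Cauchy--Binet and the two evident annihilations, plus the remark that the nondegeneracy of $\mathcal{N}$ used above holds as soon as $p_0,\dots,p_{m+n-1}$ are defined, so that the identity, being polynomial in the data, holds in general by continuity.
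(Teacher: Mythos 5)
Your proposal is correct, and it takes a genuinely different route from the paper's. The paper proves first the bilinear-form version (Theorem \ref{mth}): it applies Leclerc's tableau form of Turnbull's exchange identity to a large auxiliary matrix to factor the Wronskian-type determinant as $\Omega_m\Omega_{m+1}\cdots\Omega_{m+n-2}\,\Theta(x)$, then kills the $r$-columns of $\Theta(x)$ by row operations based on the convolution identity (\ref{defsni}), and finally passes to Theorem \ref{mti} by renormalizing through the determinantal formula (\ref{fdpop}). You instead realize both sides of (\ref{mrby}) as maximal minors of products of rectangular matrices, expand by Cauchy--Binet, and match the two expansions minor by minor via the classical duality of complementary maximal minors for full-rank matrices $U,V$ with $UV^{T}=0$ (dual Pl\"ucker coordinates); your two annihilation identities are exactly the orthogonality $\int p_{m+j-1}\bar\psi_{i-1}\,d\mu=0$ and the relation (\ref{defsni}), and the constants come from two triangular minors together with the shuffle and row-reversal signs --- I checked the bookkeeping (it reproduces the stated $C_{n,m}$, e.g.\ in the case $n=m=1$), including the point that $\nu=1$ is independent of $x$. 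What each approach buys: yours works directly with an arbitrarily normalized orthogonal sequence, avoids Turnbull's machinery, and delivers $C_{n,m}$ and $\Omega_{m-1}$ in one stroke; its crux is the complementary-minor lemma, which you state but do not prove --- it is classical, but in a written version you should prove it or give a reference, since everything else is routine. The paper's route, by contrast, yields the more general statement for bilinear forms with the explicit factor $\prod_{j=0}^{n-2}\Omega_{m+j}$, and that template is reused later (the modified Theorem \ref{mth2} for $\Delta$ composed with a quadratic). Two small remarks: your final appeal to continuity is unnecessary, since the hypothesis that $\mu$ admits a full orthogonal sequence already forces $\Omega_{m-1}\neq0$, hence $\mathcal{N}$ has rank $m$; and note that your argument, since it only uses the numbers $\mu_j^i$ and pseudo-orthogonality, would in fact also give the bilinear-form version whenever $\Omega_{m-1}\neq 0$.
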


Let us note that we have some degrees of freedom in the polynomials $q_n^i$, $n,i\ge 0$. Indeed, for a given number $x_0$, they depend on the numbers $r_n(x_0)$, $n\ge 1$, and on the sequence of polynomials $(\psi_i)_i$.

The content of this paper is as follows. In Section 4 we prove Theorem \ref{mti} (actually a further extension for orthogonal polynomials with respect to a bilinear form defined in $\PP \times \PP $). The proof is similar to that of Leclerc for Theorem \ref{tli} and it is based on some Turnbull determinantal identities (see Section 3). Section 5 is devoted to specialize Theorem \ref{mti} to some relevant operators (such as the derivative, the first order difference operator $\Delta $ and others). We show that for these operators, the polynomials $q_n^i$ and the determinant in the right hand side of (\ref{mrby}) have a richer estructure and more properties. In particular, the specialization for the first order difference operator $\Delta$ will allow us to prove our conjectures in \cite{du} for symmetries of Casorati determinants of Charlier, Meixner, Krawtchouk and Hahn orthogonal polynomials. We also prove some related symmetries for determinants of dual Hahn, Racah and Wilson polynomials. For a convenient normalization of these classical discrete families of orthogonal polynomials the symmetries look very elegant. Here it is the particular case of Hahn polynomials.

For $\alpha+c-N\not =-1, -2, \cdots$ we write $(h_{n}^{\alpha ,c,N})_n$ for the sequence of Hahn polynomials defined by
$$
h_{n}^{\alpha ,c,N}(x)=\frac{1}{n!}\sum_{j=0}^n\frac{(1-N+j)_{n-j}(c+j)_{n-j}}{(n+\alpha +c-N+j)_{n-j}}\binom{n}{j}\binom{x}{j}j!,
$$
and $H_{n,m,x}^{\alpha ,c,N}$, $n,m\ge 0$, for the Casorati Hahn determinant
$$
H_{n,m,x}^{\alpha ,c,N}=\det\big(h_{m+j-1}^{\alpha ,c,N}(x+i-1)\big)_{i,j=1}^n
$$
Then, for $n,m\ge 0$, we have
$$
H_{n,m,x}^{\alpha ,c-n-m,N+n+m}=(-1)^{nm}H_{m,n,-x}^{-\alpha ,2-c,-N}.
$$

In Section 6 we show that for certain operators $T$, the determinant in the right hand side of the previous theorem can be rewritten in terms of Selberg type integrals. Indeed,
write $\Lambda (\xx )$ for the Vandermonde determinant
\begin{equation}\label{defVd}
\Lambda (\xx )=\prod_{1\le i<j\le m}(x_i-x_j),
\end{equation}
where $\xx$ stands for the multivariable $\xx =(x_1,\cdots ,x_m)$. Given a measure $\mu$, consider the multiple integral
\begin{equation}\label{stii}
\int \cdots \int s(\xx )\Lambda^{2\gamma}(\xx )d\mu(\xx ),
\end{equation}
where $d\mu(\xx )=d\mu(x_1)\cdots d\mu(x_m)$ and $s(\xx )$ is a symmetric polynomial in the variables $x_1,\cdots , x_m$. It seems to be rather difficult to get explicit fomulae for the integral (\ref{stii}) even for simple measures $\mu$.

When $\mu$ is the Jacobi  measure $d\mu =x^{\alpha -1}(1-x)^{\beta -1} dx$, $ \alpha , \beta >0$, and $s(\xx )=1$, the integral (\ref{stii}) is the celebrated Selberg integral (\cite{Se}, see also \cite{FW}):
$$
\int _0^1 \cdots \int _0^1 \Lambda^{2\gamma}(\xx ) \prod_{j=1}^mx_j^{\alpha -1}(1-x_j)^{\beta -1}d\xx=\prod_{j=0}^{m-1}\frac{\Gamma(\alpha+j\gamma)\Gamma(\beta+j\gamma)
\Gamma(1+(j+1)\gamma)}{\Gamma(\alpha+\beta +(m+j-1)\gamma)\Gamma(1+\gamma)}.
$$
Other choices of $s(\xx )$ give well-known extensions of the Selberg integral. For instance, $s(\xx )=\prod_{j=1}^m(x_j-u)$ is Aomoto's integral \cite{Ao}, and $s(\xx )=J_\lambda(x_1,\cdots, x_m,1/\gamma)$, where $\gamma \in \NN $ and
$J_\lambda$ is a Jack polynomial, is Kadell's integral \cite{Ka1}.

For $s(\xx )=1$ and $\mu$ the Hermite measure $d\mu =e^{-x^2/2}dx$, the integral (\ref{stii}) is the also celebrated Mehta integral.

Given a number $u$ and a linear operator $T$ as in Theorem \ref{mti}, we associate it the two sequences of polynomials $(r_n)_n$ and $(s_n)_n$ as above.
In this case we take $r_n(u)=0$, $n\ge 1$ (these conditions define $r_n$, $n\ge 0$, uniquely). We write $r_n=r_{n,u}$ to stress the dependence of $r_n$ on $u$. We then have.

\begin{theorem}\label{tise} Assume that there are numbers $a_j$, $j\ge 1$, with $a_1=u$ and  $r_{n,u}(x)=\sigma_n\prod_{j=1}^n(x-a_j)$, $n\ge 1$. Let $(\hat p_n)_n$ be the sequence of monic orthogonal polynomials with respect to the measure $\mu$, then
\begin{equation}\label{hfpi}
\int \cdots \int \Lambda ^2(\xx)\prod_{j=1}^{m}r_{n,u}(x_j)
d\mu(\xx )=C_{T,n,m}\det\left( T^{i-1}(\hat p_{m+j-1}(u))\right)_{i,j=1}^n,
\end{equation}
where
$$
C_{T,n,m}=(-1)^{mn}m!\sigma_n^m\prod_{j=0}^{m-1}\Vert \hat p_j\Vert^2\prod_{j=0}^{n-1}\sigma_{j}.
$$
\end{theorem}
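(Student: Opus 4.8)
The plan is to derive Theorem~\ref{tise} from Theorem~\ref{mti} (available by hypothesis) together with the Heine--Andr\'eief identity, which rewrites the Selberg-type integral on the left of (\ref{hfpi}) as an $m\times m$ determinant built from the generalized moments $\mu_k^l$ appearing in Theorem~\ref{mti}.

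First I would invoke the Andr\'eief (Heine) identity. Applied to the signed measure $r_{n,u}\,d\mu$ (the identity is purely algebraic, so the sign changes of $r_{n,u}$ are harmless) and writing the two Vandermonde factors in $\Lambda^2(\xx)=\big(\prod_{1\le i<j\le m}(x_j-x_i)\big)^2$ by means of two \emph{monic} polynomial systems of degrees $0,1,\dots,m-1$ --- the orthogonal polynomials $\hat p_0,\dots,\hat p_{m-1}$ and the polynomials $\phi_{i-1}(x):=\prod_{l=n+1}^{n+i-1}(x-a_l)$ (recall that for any monic polynomials of degrees $0,\dots,m-1$ the associated generalized Vandermonde determinant equals $\prod_{1\le i<j\le m}(x_j-x_i)$) --- it yields
\[\int\cdots\int\Lambda^2(\xx)\prod_{j=1}^m r_{n,u}(x_j)\,d\mu(\xx)=m!\det\Big(\int\phi_{i-1}\,\hat p_{j-1}\,r_{n,u}\,d\mu\Big)_{i,j=1}^m .\]
By the hypothesis $r_{k,u}=\sigma_k\prod_{l=1}^k(x-a_l)$ we have $\phi_{i-1}\,r_{n,u}=(\sigma_n/\sigma_{n+i-1})\,r_{n+i-1,u}$, hence $\int\phi_{i-1}\hat p_{j-1}r_{n,u}\,d\mu=(\sigma_n/\sigma_{n+i-1})\,\mu_{n+i-1}^{j-1}$, where $\mu_k^l=\int r_{k,u}\hat p_l\,d\mu$ are exactly the numbers of Theorem~\ref{mti} for the choice $\psi_i=\hat p_i$. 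Taking the row factors out of the determinant,
\[\int\cdots\int\Lambda^2(\xx)\prod_{j=1}^m r_{n,u}(x_j)\,d\mu(\xx)=m!\,\sigma_n^m\Big(\prod_{k=n}^{n+m-1}\sigma_k\Big)^{-1}\det\big(\mu_{n+i-1}^{j-1}\big)_{i,j=1}^m .\]

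Next I would specialize Theorem~\ref{mti} to $p_n=\hat p_n$ (so $\xi_n=1$), $\psi_i=\hat p_i$ and $r_n=r_{n,u}$, and evaluate (\ref{mrby}) at $x=u$. Since $r_l(u)=0$ for $l\ge1$ and $r_0=1$, the recursion (\ref{defsni}) at $x=u$ forces $s_l(u)=0$ for all $l\ge1$, so by (\ref{defqny}) $q_n^i(u)=\mu_n^i$; therefore
\[\Omega_{m-1}\det\big(T^{i-1}(\hat p_{m+j-1}(u))\big)_{i,j=1}^n=C_{n,m}\det\big(\mu_{n+i-1}^{j-1}\big)_{i,j=1}^m ,\]
with $C_{n,m}=(-1)^{mn+\binom m2}\big(\prod_{k=m}^{m+n-1}\sigma_k\big)^{-1}$ and $\Omega_{m-1}=\det(\mu_{m-i}^{j-1})_{i,j=1}^m$. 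The factor $\Omega_{m-1}$ follows from orthogonality: since $\dgr r_{k,u}=k$ and $\hat p_l$ annihilates every polynomial of degree $<l$, we get $\mu_k^l=0$ for $k<l$ and $\mu_k^k=\sigma_k\Vert\hat p_k\Vert^2$, so $(\mu_{m-i}^{j-1})_{i,j}$ is anti-triangular with anti-diagonal entries $\sigma_{m-i}\Vert\hat p_{m-i}\Vert^2$, whence $\Omega_{m-1}=(-1)^{\binom m2}\prod_{k=0}^{m-1}\sigma_k\Vert\hat p_k\Vert^2$. Substituting this and $C_{n,m}$ into the previous display and then into the integral formula, the signs collapse to $(-1)^{mn}$ and the leading coefficients telescope via $\big(\prod_{k=0}^{m-1}\sigma_k\big)\big(\prod_{k=m}^{m+n-1}\sigma_k\big)\big(\prod_{k=n}^{n+m-1}\sigma_k\big)^{-1}=\prod_{k=0}^{n-1}\sigma_k$, producing exactly the constant $C_{T,n,m}$ of (\ref{hfpi}).

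The computations are routine bookkeeping; the one delicate point is choosing the auxiliary data in Theorem~\ref{mti} so that the two sides line up: using the specific monic factors $\phi_{i-1}=\prod_{l=n+1}^{n+i-1}(x-a_l)$ in the Andr\'eief step, so that $\phi_{i-1}r_{n,u}$ is again one of the polynomials $r_{\bullet,u}$, and taking $\psi_i=\hat p_i$, so that orthogonality both makes $\Omega_{m-1}$ anti-triangular and identifies the Andr\'eief determinant with $\det(\mu_{n+i-1}^{j-1})$. One should also note that the final identity is independent of these choices (neither side of (\ref{hfpi}) involves them), and that, if one does not wish to assume $\mu$ real and positive, the conjugations $\bar\psi_i$ of Theorem~\ref{mti} are carried through unchanged and become vacuous.
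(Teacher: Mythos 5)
Your proof is correct and follows essentially the same route as the paper: both rest on evaluating (\ref{mrby}) at $x=u$ (where $s_l(u)=0$ for $l\ge 1$ gives $q_n^i(u)=\mu_n^i$), on the factorization $\phi_{i-1}\,r_{n,u}=(\sigma_n/\sigma_{n+i-1})\,r_{n+i-1,u}$ (your $\phi_{i-1}$ are exactly the paper's auxiliary polynomials $h_{i-1}$), and on a Heine--Andr\'eief symmetrization (the paper's Lemma \ref{llhi}). The only differences are cosmetic: you apply the full Andr\'eief identity to the integral at the outset rather than expanding $\det\big(q^{j-1}_{n+i-1}(u)\big)$ into a multiple integral and symmetrizing afterwards, and you fix $\psi_i=\hat p_i$ so that $\Omega_{m-1}$ is computed by anti-triangularity instead of invoking the paper's general formula (\ref{oitnm}).
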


For $n=1$, the identity (\ref{hfpi}) is known (see \cite{Sz}, (2.2.10)).

Theorem \ref{tise} applies to the  relevant operators $T=d/dx$ and $T=\Delta$   since in each case we can choose polynomials $r_n$, $n\ge 0$, with $T(r_n)=r_{n-1}$ and satisfying the hypothesis in Theorem \ref{tise}.
We can then deduce some Selberg type integrals and sums corresponding to $\gamma =1$ in (\ref{stii}). For instance, for $T=d/dx$, $n=0$, $u=0$, and $\mu$ the Jacobi weight, we recover the Selberg integral for $\gamma=1$. The case $n=1$ and again $\gamma=1$ is Aomoto's result for (\ref{stii}) when $s(\xx )=\prod_{j=1}^m(x_j-u)$ (\cite{Ao}). For $T=\Delta$, $n=0$, $u=0$ and $\mu$ the Meixner, Krawtchouk or Hahn weights
Theorem \ref{tise} gives a deduction for Askey conjectures 5, 6 and 7, respectively, in \cite{Ask} when $\gamma=1$. The case $n=1$ and again $\gamma=1$ can be considered a discrete version of Aomoto's integral.

\bigskip
In Section 7, we show that for certain operators $T$ and certain families of orthogonal polynomials $(p_n)_n$, one (or both) of the determinants in (\ref{mrby}) can be rewritten as the constant term of certain multivariate Laurent expansions. This is not surprising, because certain  identities
for constant term of Laurent polynomials are closely related to Selberg integrals. The first of these constant term identities was conjectured by Dyson \cite{dy} in 1962 in his study of the statistical theory of complex systems:
$$
\mbox{C.T.$\vert _{\zz =0}$} \prod_{1\le i<j\le n}\left(1-\frac{z_i}{z_j}\right)^k\left(1-\frac{z_j}{z_i}\right)^k=\frac{(kn)!}{(k!)^n},
$$
where we denote by $\mbox{C.T.$\vert _{\zz =0}$}F(\zz )$ the constant term of the Laurent expansion at $\zz =(z_1,\cdots,z_n)=(0,\cdots ,0)$ of $F(\zz )$.

Macdonald \cite{Mac} in 1982 related Dyson identity with the root system $A_{n-1}$ and posed some other conjectures which generalize Dyson constant term identity to all finite root systems. Morris \cite{Mo} also proposed some constant term identities related to root systems and Selberg integrals. The following one can be considered a contour version of Selberg integral:
\begin{equation}\label{mocti}
\mbox{C.T.$\vert _{\zz =0}$}\Lambda ^{2k}(\zz )
\prod_{j=1}^n\frac{\left(1-\frac{z_j}{a}\right)^x}{z_j^{m+k(n-1)}}=(-1)^{k\binom{n}{2}+mn}a^{-mn}
\prod_{j=0}^{n-1}\frac{(x+kj)!(k(j+1))!}{(x-m+kj)!(m+kj)!k!},
\end{equation}
where $x,m,k,n$ are nonnegative integers, $m\le x$ and $a\not =0$.

For $T=\Delta$ and $\mu$ the Meixner measure
$$
\mu=\sum_{x=0}^\infty \frac{a^x\Gamma (x+c)}{x!}\delta _x,
$$
Theorem \ref{mti} gives a generalization of Morris identity (\ref{mocti}) for $k=1$. Indeed, using the integral representation for the Meixner polynomials $(m_n^{a,c})_n$ provided by the generating function
$$
\left( 1-\frac{z}{a}\right) ^x(1-z)^{-x-c}=\sum_{n=0}^\infty m_n^{a,c}(x)z^n,
$$
we prove the following constant term identity:
\begin{align*}
\mbox{C.T.$\vert _{\zz =0}$} \prod_{j=1}^n&\frac{\left( 1-\frac{z_j}{a}\right)^{x}}{(1-z_j)^{x+c+n-1}z_j^{m+n-1}}\prod_{1\le i<j\le n}(z_i-z_j)^2\\ &=(-1)^{\binom{m}{2}+\binom{n}{2}+nm}n!a^{\binom{m}{2}} \det\left(m_{n+i-1}^{a,-c-n-i-j+3}(-x+j-1) \right)_{i,j=1}^m.
\end{align*}
For $c=-x-n+1$, the determinant in the right hand side of this identity can be easily computed, and we recover Morris identity (\ref{mocti}) for $k=1$.
For $m=1$, we get
\begin{align*}
\mbox{C.T.$\vert _{\zz =0}$} \prod_{j=1}^n\frac{\left( 1-\frac{z_j}{a}\right)^{x}}{(1-z_j)^{x+c+n-1}z_j^{n}}\prod_{1\le i<j\le n}(z_i-z_j)^2=(-1)^{\binom{n+1}{2}}n!m_{n}^{a,-c-n+1}(-x).
\end{align*}
which it can be considered a contour integral version of Aomoto's integral \cite{Ao}.

We find other constant term identities using ultraspherical and Charlier polynomials.

\section{Preliminaries}
Let $\langle ,\rangle$ be a bilinear form acting in the linear space of polynomials: $\langle ,   \rangle :\PP \times \PP \to \CC $. Let $K$ be a positive integer or infinity. We say that the polynomials $p_n$, $0\le n\le K$, are pseudo-orthogonal with respect to $\langle , \rangle$ if
$\langle p_n,p_j\rangle =0$, $0\le j\le n-1$ and $1\le n\le K$. If, in addition, $p_n$ has degree $n$, $0\le n\le K$, we say that
$(p_n)_{n=0}^K$ is a sequence of orthogonal polynomials with respect to $\langle ,\rangle$.
It is clear that the $n$-th orthogonal polynomial
with respect to a bilinear form, if there exists, is unique up to multiplicative constants.

Let $(r_n)_{n=0}^K$ and $(\psi _n)_{n=0}^K$ be two sequences of polynomials satisfying that $\dgr (r_n)=\dgr (\psi_n)=n$.
Write $\sigma _n$ for the leading coefficient of $r_n$. Write also
$\mu_j^i=\langle r_j,\psi_i\rangle$, $0\le i,j\le K$. We can then construct pseudo orthogonal polynomials $p_n$, $n=0,\cdots ,K$, with respect to $\langle ,\rangle$ using the formula
\begin{equation}\label{fdpo1}
p_n(x)=\left| \begin{matrix} \mu _n^0 & \mu _n^1&\cdots & \mu _n^{n-1}&r_n(x)\\
\mu _{n-1}^0 &\mu_{n-1}^1& \cdots &\mu_{n-1}^{n-1}&r_{n-1}(x)\\ \vdots &\vdots & \ddots & \vdots& \vdots
\\\mu_0^0 & \mu_0^1&\cdots &\mu_0^{n-1}&1 \end{matrix} \right|,\quad 1\le n\le K, \quad p_0=1.
\end{equation}
Indeed, from the definition, we straightforwardly have that $\langle p_n,\psi_j\rangle=0$, $j=0,\cdots , n-1$. Since $\dgr(r_j)=j$, we have that $\dgr(p_n)\le n$, and since $\dgr(\psi_j)=j$ as well, we can write each $p_j$ as a linear combination of $\psi_i$, $i=0,\cdots, j$. As a consequence we get
$\langle p_n,p_j\rangle=0$, $j=0,\cdots, n-1$.

Consider now the determinants $\Omega_n$, $n\ge 0$, defined by
\begin{equation}\label{defome}
\Omega _n=\left| \begin{matrix} \mu _n^0 & \cdots & \mu _n^n\\ \vdots & \ddots & \vdots \\ \mu _0^0 & \cdots &\mu_{0}^n \end{matrix} \right|.
\end{equation}
It is clear from (\ref{fdpo1}) that the coefficient of $x^n$ in the power expansion of $p_n$ is $(-1)^{n}\Omega_{n-1}\sigma_n$.

Hence, the bilinear form $\langle ,\rangle$ has a sequence $p_n$, $n=0,\cdots ,K$, of orthogonal polynomials if and only if $\Omega _n\not =0$, $0\le n\le K$. We can then construct a sequence of monic orthogonal polynomials $(\hat p_n)_{n=0}^K$, $\hat p_n$ of
degree $n$, using the formula
\begin{equation}\label{fdpop}
\hat p_n(x)=\frac{(-1)^{n}}{\Omega _{n-1}\sigma_n}\left| \begin{matrix} \mu _n^0 & \mu _n^1&\cdots & \mu _n^{n-1}&r_n(x)\\
\mu _{n-1}^0 &\mu_{n-1}^1& \cdots &\mu_{n-1}^{n-1}&r_{n-1}(x)\\ \vdots &\vdots & \ddots & \vdots& \vdots
\\\mu_0^0 & \mu_0^1&\cdots &\mu_0^{n-1}&1 \end{matrix} \right|,\quad 1\le n\le K, \quad p_0=1.
\end{equation}

The determinant $\Omega_n$ can be computed in terms of the norm of the monic polynomials. Indeed, write $\Vert \hat p_n\Vert ^2=\langle \hat p_n,\hat p_n\rangle $.
If we write $\upsilon_n$ for the leading coefficient of $\psi_n$, we have $\psi_n=\upsilon _n \hat p_n+\sum_{j=0}^{n-1} \alpha_{n,j}\hat p_j$. And hence $\langle \hat p_n,\psi_n\rangle=\bar \upsilon _n \Vert \hat p_n\Vert ^2$. Using (\ref{fdpop}), we also get $\langle \hat p_n,\psi_n\rangle=(-1)^n\Omega_n/(\Omega_{n-1}\sigma_n)$. From where we get
\begin{equation}\label{oitnm}
\Omega_n=(-1)^{n(n+1)/2}\prod_{j=0}^n\sigma_j\bar\upsilon_j\Vert \hat p_j\Vert ^2.
\end{equation}
In this paper, we are mainly interested in bilinear form defined by a measure $\mu$ supported in the real line: $\langle p,q\rangle =\int p\overline{q}d\mu$.

A bilinear form can be represented by a measure supported in the real line if and only if it is Hermitian ($\langle p,q\rangle =\overline
{\langle q,p\rangle}$, $p,q\in \PP$) and the operator of multiplication by $x$ is symmetric with respect
to the bilinear form: $\langle xp,q\rangle =\langle p,xq\rangle$, $p,q\in \PP$.

\bigskip
We need some combinatorial identities which we include in the following lemma.

\begin{lemma}
For $x, u\in\RR $ and $n,i,j,g\in \NN$  we have
\begin{equation}\label{abc2}
\binom{x}{i}=\sum_{l=0}^i\binom{j}{i-l}\binom{x-j}{l}.
\end{equation}
For $i\le g\le n+i$, we have
\begin{equation}\label{abc1}
\binom{g}{i}\binom{-x+i}{n+i-g}=\sum_{j=0}^{\min(n,g)}\binom{j}{i+j-g}\binom{g}{g-j}\binom{-x}{n-j}.
\end{equation}
And for $0\le g\le n$, we have
\begin{align}\label{abc3}
\binom{g+i}{i}&\binom{x+u+n+i-1}{n-g}(x-i)_{n-g}\\\nonumber &
=\sum_{j=0}^{n}(-1)^{j-g}(u+g+i)_{j-g}(x)_{n-j} \binom{j}{j-g}\binom{g+i}{j}\binom{x+u+n-1}{r-j}.
\end{align}
\end{lemma}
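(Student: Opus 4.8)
The plan is to prove the three identities in turn, deriving the last (and hardest) from the Pfaff--Saalsch\"utz summation theorem. Identity (\ref{abc2}) is Vandermonde's convolution with an indeterminate upper entry: it is the coefficient of $t^i$ on both sides of the polynomial identity $(1+t)^j(1+t)^{x-j}=(1+t)^x$, so it holds for all $x\in\RR$. For (\ref{abc1}) I would deduce it from (\ref{abc2}): apply (\ref{abc2}) to $\binom{-x+i}{n+i-g}$ with the splitting $-x+i=(-x)+i$, which gives $\binom{-x+i}{n+i-g}=\sum_{l}\binom{i}{n+i-g-l}\binom{-x}{l}$; multiply by $\binom{g}{i}$ and re-index by $j=n-l$. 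The summand becomes $\binom{g}{i}\binom{i}{i+j-g}\binom{-x}{n-j}$, and the complementation $\binom{i}{i+j-g}=\binom{i}{g-j}$ together with the subset-of-a-subset identity $\binom{g}{i}\binom{i}{g-j}=\binom{g}{g-j}\binom{j}{i+j-g}$ converts it into the summand of (\ref{abc1}). The Vandermonde sum naturally produces the range $g-i\le j\le\min(n,g)$; extending it down to $j=0$ is harmless because $\binom{j}{i+j-g}=0$ for $j<g-i$.

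Identity (\ref{abc3}) is where the work lies, and the plan is to turn it into a classical hypergeometric evaluation. First, the terms with $j<g$ drop out because $\binom{j}{j-g}=0$, so with $j=g+k$ the sum runs over $k=0,\dots,n-g$; the elementary identity $\binom{g+k}{k}\binom{g+i}{g+k}=\binom{g+i}{i}\binom{i}{k}$ cancels the factor $\binom{g+i}{i}$ from both sides, and setting $N=n-g$, $w=u+g$ reduces the claim to
\begin{equation*}
\binom{x+w+N+i-1}{N}(x-i)_N=\sum_{k=0}^{N}(-1)^k(w+i)_k\,(x)_{N-k}\,\binom{i}{k}\,\binom{x+w+N-1}{N-k}.
\end{equation*}
Now I would rewrite every binomial through Pochhammer symbols and factorials ($(-1)^k\binom{i}{k}=(-i)_k/k!$, $\binom{x+w+N-1}{N-k}=(x+w+k)_{N-k}/(N-k)!$, $\binom{x+w+N+i-1}{N}=(x+w+i)_N/N!$), clear $N!$, and divide by $(x)_N(x+w)_N$ using $(x)_{N-k}/(x)_N=1/(x+N-k)_k$, $(x+w+k)_{N-k}/(x+w)_N=1/(x+w)_k$ and $(x+N-k)_k=(-1)^k(1-N-x)_k$. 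The right-hand side becomes a terminating ${}_3F_2$ at $1$ with numerator parameters $-N,-i,w+i$ and denominator parameters $1-N-x,\,x+w$; the equality $1+(-N)+(-i)+(w+i)=(1-N-x)+(x+w)$ shows the series is balanced, so the Pfaff--Saalsch\"utz theorem evaluates it, and the reductions $(1-N-x)_N=(-1)^N(x)_N$, $(1-N-x-w)_N=(-1)^N(x+w)_N$, $(1-N-x+i)_N=(-1)^N(x-i)_N$, $(1-N-x-w-i)_N=(-1)^N(x+w+i)_N$ collapse the closed form back to $(x+w+i)_N(x-i)_N$; undoing the substitutions $N=n-g$, $w=u+g$ yields (\ref{abc3}).

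The one genuinely delicate step is this reduction to a balanced ${}_3F_2$: the binomial-to-Pochhammer conversion and the normalization must be arranged so that the Saalsch\"utzian condition becomes transparent, after which everything is a direct appeal to Pfaff--Saalsch\"utz. (The displayed identity can also be established by induction on $N$, peeling the factor $x+w+N+i-1$ off the left side and reorganizing the right-hand sum, but the hypergeometric route is shorter and cleaner.)
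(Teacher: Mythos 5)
Your proof is correct, but it takes a genuinely different route from the paper's. The paper dispatches all three identities in one line, asserting that they follow from Pascal's rule $\binom{u}{v}=\binom{u-1}{v}+\binom{u-1}{v-1}$ by induction on $x$ (to be understood as induction over integer values combined with the observation that both sides are polynomials in $x$ and $u$). You instead prove (\ref{abc2}) as Vandermonde's convolution, deduce (\ref{abc1}) from it via the re-indexing $j=n-l$ and the subset-of-a-subset identity $\binom{g}{i}\binom{i}{g-j}=\binom{g}{g-j}\binom{j}{i+j-g}$, and reduce (\ref{abc3}) --- after the shift $j=g+k$, the cancellation $\binom{g+k}{k}\binom{g+i}{g+k}=\binom{g+i}{i}\binom{i}{k}$ and the substitution $N=n-g$, $w=u+g$ --- to a terminating balanced ${}_3F_2$ with parameters $-N,-i,w+i$ over $1-N-x,\,x+w$, which Pfaff--Saalsch\"utz evaluates; I checked the parameter bookkeeping and the final simplification $(1-N-x+i)_N(1-N-x-w-i)_N/\bigl[(1-N-x)_N(1-N-x-w)_N\bigr]=(x-i)_N(x+w+i)_N/\bigl[(x)_N(x+w)_N\bigr]$, and it is right; you also correctly read the paper's misprint $\binom{x+u+n-1}{r-j}$ as $\binom{x+u+n-1}{n-j}$, which is the reading consistent with how the lemma is used later. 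Your route buys an explicit, checkable mechanism resting on two classical summation theorems, at the price of invoking Pfaff--Saalsch\"utz; the paper's induction is more elementary but is only sketched. Two small points worth making explicit in a final writeup: the division by $(x)_N(x+w)_N$ is valid for generic $x$, and since both sides of (\ref{abc3}) are polynomials in $x$ and $u$ the identity then extends to all real values; and in (\ref{abc1}) the Vandermonde sum actually runs over $g-i\le j\le n$, the terms with $j>g$ vanishing because $\binom{i}{i+j-g}=0$ (equivalently $\binom{g}{g-j}=0$), before the harmless extension down to $j=0$.
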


\begin{proof}

The identities can be   proved from the basic identity $\binom{u}{v}=\binom{u-1}{v}+\binom{u-1}{v-1}$
using induction on $x$.

\end{proof}

\section{Leclerc version of Turnbull determinantal identities}
In this section, we recall briefly Leclerc notation of \cite{Le1} for minor identities (which is a variant of Turnbull's dot notation, \cite{Tur}, p. 27). Let $M$ be a $n\times p$ matrix with $p>n$ and $a,b,c,\cdots, d$ be $n$ column vectors of $M$. The maximal minor of $M$ taken on these $n$ columns is denoted by either a bracket or a one line tableau:
$$
[ab\cdots c]=\framebox{$\begin{matrix} a& b& \cdots &c\end{matrix}$}.
$$
A product of $k$ minors of $M$ is designated by a $k\times n$ tableau. For instance, for $k=2$:
$$
[ab\cdots c]\cdot [de\cdots f]=\framebox{$\begin{matrix}a& b& \cdots &c\\ d& e& \cdots &f\end{matrix}$}.
$$
To denote alternating sums of products of minors, tableaux with boxes enclosing certain vectors are used. Let $T$ be a $k\times n$ tableau and $A$ a subset of elements of $T$. Given a permutation $\sigma$ of the elements of $A$, denote by $\sigma (T)$ the tableau in which the elements of $A$ are permuted by $\sigma$. Now, boxing in $T$ the elements of $A$, we get a new tableau $\tau$ which will represent the alternating sum of all tableaux $\sigma(T)$, taking into account the fact that a permutation of elements of the same row gives a trivial action. More precisely $\tau$ is defined by
$$
\tau=\sum_{\sigma}\sign(\sigma)\sigma(T),
$$
where $\sigma $ runs trough the cosets of the symmetric group $\Sigma(A)$ modulo the subgroup of the permutations which leave unchanged the rows of $T$. For example,
$$
\framebox{$\begin{matrix} \framebox{$a$}& \framebox{$b$} & c\\d&e&\framebox{$f$}\end{matrix}$}=\framebox{$\begin{matrix} a& b & c\\d&e&f\end{matrix}$}-\framebox{$\begin{matrix} f& b & c\\d&e&a\end{matrix}$}-\framebox{$\begin{matrix} a& f & c\\d&e&b\end{matrix}$}.
$$

We can now state Turnbull's identity (see \cite{Tur}, p. 48 and \cite{Le1}).

\begin{theorem}\label{lvti}
Let $\tau$ be a $p\times n$ tableau with set of enclosed elements $A$ of cardinality $\le n$. Let $R$ be one of the rows of $T$ and denote by $B$ the set of elements of $R$ which are not enclosed. Form a new tableau $\nu$ by (i) exchanging the elements of $A$ which do not belong to $R$ with elements of $B$; (ii) removing the boxes of the elements of $A$; (iii) boxing the elements of $B$; then $\tau=\nu$.
\end{theorem}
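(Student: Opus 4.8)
\emph{Setup as multilinear algebra.} The plan is to translate the whole tableau calculus into linear algebra over $V=\CC^n$. I regard the columns of $M$ as vectors $w_a,w_b,\dots\in V$ and a one-line minor as the bracket $[x_1\cdots x_n]=\det(w_{x_1},\dots,w_{x_n})$, so that a tableau is the product of the brackets of its rows. The first step is a lemma identifying the boxing with an antisymmetrizer: for an enclosed set $S$, the sum $\sum_\sigma\sign(\sigma)\sigma(T)$ is exactly the alternating sum obtained by redistributing the labels $(w_x)_{x\in S}$ over the positions they occupy, the passage to cosets modulo the row-stabilizer (permutations of $S$ preserving each row) being precisely what discards the redundant placements that permute two boxed entries inside one row — these reproduce the same term since each row-bracket is already alternating. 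One checks this reading directly on the worked example of the definition. Having done this, I fix the distinguished row $R$, write $G=A\cap R$ and $B$ for its boxed and unboxed parts and $H=A\setminus R$ for the boxed entries outside $R$ (so $s:=|H|=|A|-|G|\le|B|$, because $|A|\le n$), and record that each remaining row contributes, through its boxed slots, a multilinear alternating functional on $V$ obtained by fixing its unboxed columns. Thus the tableau equals $\det(\text{row }R)$ times a product of such functionals, multilinear and alternating in the arguments of row $R$.

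\emph{The engine and the case $s=1$.} The engine is the vanishing $\bigwedge^{n+1}V=0$, i.e.\ the functorial dependence of any $n+1$ vectors in $\CC^n$,
\[
\sum_{i=0}^{n}(-1)^i\det(v_0,\dots,\widehat{v_i},\dots,v_n)\,v_i=0 .
\]
Contracting this relation against the functional supplied by the rows other than $R$ yields a Grassmann--Pl\"ucker relation, and this already settles the case $s=1$. Indeed, when a single boxed slot lies outside $R$ (one linear functional $\phi$), the full antisymmetrization of the $n+1$ labels $R\cup H$ over the $n$ slots of $R$ together with the outside slot vanishes, being an alternating function of $n+1$ vectors in $\CC^n$; grouping its terms by the label occupying the outside slot separates those lying in $A$, which reassemble into $\tau$, from those lying in $B$, which reassemble into $\nu$, so the vanishing reads exactly $\tau-\nu=0$. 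One verifies on the $2\times2$ and $2\times3$ cases that this is literally a Pl\"ucker, resp.\ four-term Grassmann--Pl\"ucker, identity.

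\emph{General case and the obstacle.} For $s\ge2$ the same single antisymmetrization no longer suffices: expanding $\bigwedge^{n+s}V=0$ and sorting by the $s$-set $L$ of labels in the outside slots produces, besides the wanted $\tau$-block ($L\subseteq A$) and $\nu$-block ($L\subseteq B$), a family of mixed blocks ($L$ meeting both $A$ and $B$) which, as a numerical check on an $s=2$ example shows, do \emph{not} cancel among themselves. The main obstacle is therefore exactly this sign bookkeeping. I would resolve it in one of two equivalent ways. The clean conceptual route is to pass to the bracket algebra and straighten both $\tau$ and $\nu$ into standard bitableaux by the Grassmann--Pl\"ucker relations; since the straightening normal form is unique, it suffices to check that $\tau$ and $\nu$ reduce to the same standard form, which also removes any dependence on the chosen exchange. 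The hands-on route, following Leclerc \cite{Le1}, is to expand $\tau$ and $\nu$ into products of minors and to exhibit a sign-preserving bijection between the two sets of monomials, built by iterating the $s=1$ transfer (moving one element of $H$ into $R$ at a time while re-boxing) so that the mixed contributions telescope. In either route the only real work is combinatorial: tracking the signs $\sign(\sigma)$ and confirming that the outcome is independent of the order in which the elements of $H$ and $B$ are exchanged — a robustness already guaranteed, as one sees on small cases, because the final boxing of $B$ re-antisymmetrizes those choices away.
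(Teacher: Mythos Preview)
The paper does not prove this theorem: it is quoted from Turnbull \cite{Tur} (in Leclerc's formulation \cite{Le1}) and then used as a black box in the proof of Theorem~\ref{mth}. There is therefore no proof in the paper to compare against.

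On your proposal itself: the reading of a boxed set as an antisymmetrizer and the $s=1$ case via the Grassmann--Pl\"ucker relation are both correct and standard. Your diagnosis of the $s\ge 2$ obstacle --- that the mixed terms with $L$ meeting both $A$ and $B$ do not cancel pairwise --- is also accurate. But from that point on the proposal is a plan rather than a proof. The straightening route is valid in principle, since $\tau$ and $\nu$ live in the bracket ring and standard monomial theory guarantees a unique normal form; however, invoking it abstractly does not discharge the obligation, because verifying that $\tau$ and $\nu$ straighten to the same standard expression is precisely the content of the theorem, and you would also need to check that the straightening relations you rely on are only the basic quadratic Pl\"ucker relations and not already Turnbull-type exchanges. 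The iterated-transfer route is closer to what Turnbull and Leclerc actually do, and it does work, but it is not simply ``apply the $s=1$ case repeatedly and hope the mixed terms telescope'': one has to set up the induction so that after a single transfer the resulting tableau is again of the shape covered by the hypothesis (same row $R$, boxed set updated correctly, $|H|$ decreased by one), and then carry out the sign bookkeeping you rightly flag as the crux. As it stands, you have a sound outline that stops just short of the actual combinatorial work.
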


Take for instance
$$
\tau=\framebox{$\begin{matrix} \framebox{$\mathbf{a}$}& \alpha &\beta &\gamma &\delta &\epsilon \\
\framebox{$\mathbf{b}$} & \framebox{$\mathbf{c}$}&f&g&h&i\\\framebox{$\mathbf{d}$}&j&k&l&m&o\end{matrix}$}
$$
and choosing for $R$ the first row, we have $A=\{\mathbf{a}, \mathbf{b}, \mathbf{c}, \mathbf{d}\}$, $B=\{\alpha, \beta, \gamma, \delta ,\epsilon\}$ and we obtain
$$
\tau=\framebox{$\begin{matrix} \framebox{$\mathbf{a}$}& \alpha &\beta &\gamma &\delta &\epsilon \\
\framebox{$\mathbf{b}$} & \framebox{$\mathbf{c}$}&f&g&h&i\\\framebox{$\mathbf{d}$}&j&k&l&m&o\end{matrix}$}=
\framebox{$\begin{matrix} \mathbf{a}& \mathbf{b} &\mathbf{c} &\mathbf{d} &\framebox{$\delta$} &\framebox{$\epsilon$} \\
\framebox{$\alpha$} & \framebox{$\beta$}&f&g&h&i\\\framebox{$\gamma$}&j&k&l&m&o\end{matrix}$}=\nu
$$
Note that here $\tau$ represents a sum of $12$ products of minors, whereas $\nu$ stands
for a sum of 30 products.

\section{Wronskian type determinants of orthogonal polynomials}

We are now ready to establish and prove the main result of this paper.

Consider a linear operator $T$ acting in the linear space of polynomials $\PP$ and satisfying that $\dgr(T(p))=\dgr(p)-1$, for all polynomial
$p$. We associate two sequences of polynomials $(r_n)_n$ and $(s_n)_n$ to this operator $T$. The sequence $(r_n)_n$ satisfies that
\begin{equation}\label{defrn}
\begin{cases} &\mbox{$r_0=1$ and the degree of $r_n$ is $n$, $n\ge 0$},\\
 &\mbox{$T(r_n)=r_{n-1}$, $n\ge 0$}. \end{cases}
\end{equation}
This sequence of polynomials always exists and it is unique if we fix the values of $r_n$, $n\ge 1$, at a given number $x_0$. Indeed, fix a sequence of numbers $(\xi_n)_{n\ge 1}$ and assume that we have constructed the polynomials $r_j$, $j=0,\cdots, n$, satisfying $r_j(x_0)=\xi_j$ and (\ref{defrn}). Since $T((x-x_0)^{n+1})$ is a polynomial of degree $n$, we can write $T((x-x_0)^{n+1})=\sum_{j=0}^n\alpha_jr_j$, with $\alpha_n\not =0$. We
then define
$$
r_{n+1}(x)=\frac{(x-x_0)^{n+1}-\sum_{j=0}^{n-1}\alpha_j(r_{j+1}(x)-\xi_{j+1})}{\alpha_n}+\xi_{n+1}.
$$
From where we easily get that $r_{n+1}$ has degree $n+1$, $T(r_{n+1})=r_n$ and $r_{n+1}(x_0)=\xi_{n+1}$.

The sequence $(s_n)_n$ are now defined recursively by $s_0=1$ and
\begin{equation}\label{defsn}
\sum_{j=0}^ns_j(x)r_{n-j}(x)=0.
\end{equation}
It is easy to see that if we write $\Psi_r(x,t)$, $\Psi_s(x,t)$ for the (formal) generating functions of the sequences $(r_n)_n$, $(s_n)_n$, respectively,
\begin{equation}\label{gfdi}
\Psi_r(x,t)=\sum_{n=0}^\infty r_n(x)t^n,\quad \Psi_s(x,t)=\sum_{n=0}^\infty s_n(x)t^n,
\end{equation}
we have $\Psi_r(x,t)\Psi_s(x,t)=1$.

\begin{theorem}\label{mth} Let $T$ be a linear operator acting in the linear space of polynomials satisfying $\dgr(T(p))=\dgr(p)-1$, for all polynomial
$p$. We associate it the two sequences of polynomials $(r_n)_n$ and $(s_n)_n$ defined as above (see (\ref{defrn}) and (\ref{defsn})). Let $\langle ,\rangle$ be a bilinear form acting in the linear space of polynomials.
For a given sequence of polynomials $(\psi_i)_i$, $\psi _i$ of degree $i$, we write $\mu_j^i=\langle r_j,\psi_i\rangle$, $0\le i,j$. With the operator $T$ and the bilinear form $\langle ,\rangle$ we associate the
polynomials $p_n, q_n^i$, $n,i\ge0$. The polynomials $p_n$, $n\ge0$, are the pseudo orthogonal polynomials with respect to
$\langle ,\rangle$ defined by $p_0=1$ and for $n\ge 1$
\begin{equation}\label{fdpo2}
p_n(x)=\left| \begin{matrix} \mu _n^0 & \mu _n^1&\cdots & \mu _n^{n-1}&r_n\\
\mu _{n-1}^0 &\mu_{n-1}^1& \cdots &\mu_{n-1}^{n-1}&r_{n-1}\\ \vdots &\vdots & \ddots & \vdots& \vdots
\\\mu_0^0 & \mu_0^1&\cdots &\mu_0^{n-1}&1 \end{matrix}\right| .
\end{equation}
The polynomials $q_n^i$, $n,i\ge0$, are defined by
\begin{equation}\label{defqn}
q_n^i(x)=\sum_{j=0}^n\mu _j^is_{n-j}(x).
\end{equation}
We then have
\begin{equation}\label{mrb}
\det\left(T^{i-1}(p_{m+j-1})\right)_{i,j=1}^n=C_{n,m}\det\left( q^{j-1}_{n+m-i}\right)_{i,j=1}^m,
\end{equation}
where $C_{n,m}=(-1)^{\binom{n}{2}}\prod_{j=0}^{n-2}\Omega_{m+j}$ is independent of $x$ (and $\Omega_n$ is defined by (\ref{defome})).
\end{theorem}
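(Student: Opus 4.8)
The plan is to follow Leclerc's strategy for Theorem~\ref{tli}, reducing (\ref{mrb}) to a single minor identity of Turnbull type which is then settled by Theorem~\ref{lvti}. I would begin with elementary bookkeeping. Since $\dgr(T(p))=\dgr(p)-1$ forces $T(1)=0$, (\ref{defrn}) gives $T^{k}(r_{l})=r_{l-k}$ for $l\ge k$ and $T^{k}(r_{l})=0$ otherwise; expanding the determinant (\ref{fdpo2}) along its last column yields $p_{n}=\sum_{l=0}^{n}(-1)^{l}M_{n,l}r_{l}$, where $M_{n,l}$ is the minor of $(\mu_{b}^{a})$ keeping the columns $a=0,\dots,n-1$ and the rows $b\in\{0,\dots,n\}\setminus\{l\}$ in decreasing order (so $M_{n,n}=\Omega_{n-1}$, and $M_{n,l}=0$ if $l\notin\{0,\dots,n\}$). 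Hence $T^{i-1}(p_{n})=\sum_{l}(-1)^{l}M_{n,l}r_{l-i+1}$. I would also record that $\Psi_{r}\Psi_{s}=1$ says exactly that the lower triangular unit Toeplitz matrices $(r_{p-q})_{p,q}$ and $(s_{p-q})_{p,q}$ are mutually inverse.

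Next I would apply Cauchy--Binet to both sides. On the left, the $(i,j)$ entry of the determinant in (\ref{mrb}) is $\sum_{l}(-1)^{l}M_{m+j-1,l}\,r_{l-i+1}$, i.e. the $(i,j)$ entry of the product of the $n\times(m+n)$ matrices $(r_{l-i+1})_{i,l}$ and $((-1)^{l}M_{m+j-1,l})_{j,l}$ (the second transposed); Cauchy--Binet gives
\begin{equation*}
\det\left(T^{i-1}(p_{m+j-1})\right)_{i,j=1}^{n}=\sum_{L}(-1)^{\sum_{l\in L}l}\,\Delta_{r}(L;x)\,\det\left(M_{m+j-1,l_{a}}\right)_{j,a=1}^{n},
\end{equation*}
the sum over $n$-element subsets $L=\{l_{1}<\cdots<l_{n}\}$ of $\{0,\dots,m+n-1\}$, with $\Delta_{r}(L;x)=\det(r_{l_{a}-i+1})_{i,a=1}^{n}$. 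On the right, $q_{n+m-i}^{j-1}=\sum_{k}s_{n+m-i-k}\mu_{k}^{j-1}$ exhibits $\det(q_{n+m-i}^{j-1})_{i,j=1}^{m}$ as a minor of the product of an $s$-Toeplitz matrix with the moment matrix $(\mu_{k}^{j-1})$; Cauchy--Binet together with Jacobi's complementary-minor identity (which, using that $(r_{p-q})$ and $(s_{p-q})$ are inverse, converts the $s$-Toeplitz minor into an $r$-Toeplitz minor), after reindexing the sum by complementary subsets, rewrites it as
\begin{equation*}
\det\left(q_{n+m-i}^{j-1}\right)_{i,j=1}^{m}=\sum_{L}(-1)^{\,mn+\binom{m+n}{2}+\sum_{l\in L}l}\,\Delta_{r}(L;x)\,N(L^{c}),
\end{equation*}
where $L^{c}=\{k_{1}<\cdots<k_{m}\}$ is the complement of $L$ in $\{0,\dots,m+n-1\}$ and $N(L^{c})=\det(\mu_{k_{i}}^{b-1})_{i,b=1}^{m}$. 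Since both expansions run over the same index set and carry the common factor $(-1)^{\sum_{l\in L}l}\Delta_{r}(L;x)$, all of (\ref{mrb}) follows once we establish, for every such $L$, the purely algebraic ``master'' identity
\begin{equation*}
\det\left(M_{m+j-1,l_{a}}\right)_{j,a=1}^{n}=(-1)^{\binom{n}{2}+mn+\binom{m+n}{2}}\Bigl(\prod_{j=0}^{n-2}\Omega_{m+j}\Bigr)N(L^{c}).
\end{equation*}

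It remains to prove the master identity. It is a homogeneous polynomial identity in the entries $\mu_{b}^{a}$ of exactly the shape Turnbull's identities are designed for: a determinant whose entries are themselves minors on one side, a product of minors on the other. The plan is to embed the minors $M_{m+j-1,l}$ (whose sizes depend on $j$) into one rectangular matrix of moments, bordering the smaller ones so that all become maximal minors, to write the left-hand side as a boxed tableau on that matrix, and then to choose the distinguished row and enclosed set in Theorem~\ref{lvti} so that a single Turnbull move freezes and factors off the minors $\Omega_{m},\Omega_{m+1},\dots,\Omega_{m+n-2}$ and leaves $N(L^{c})$; the residual sign is then collected. As a check, when $L=\{m,m+1,\dots,m+n-1\}$ the matrix $(M_{m+j-1,l_{a}})_{j,a}$ is lower triangular with diagonal $\Omega_{m-1},\Omega_{m},\dots,\Omega_{m+n-2}$, and there $N(L^{c})=(-1)^{\binom{m}{2}}\Omega_{m-1}$, so both sides agree because $\binom{m}{2}+\binom{n}{2}+mn=\binom{m+n}{2}$.

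I expect the master identity to be the main obstacle: pinning down the correct ambient matrix and boxed configuration, checking that exactly one application of Turnbull's identity strips off the product $\Omega_{m}\cdots\Omega_{m+n-2}$, and keeping precise track of the signs introduced by the bordering, the decreasing-versus-increasing row orders in $M_{n,l}$, $\Omega_{n}$ and $N(L^{c})$, and the Jacobi step. The case $n=1$, where (\ref{mrb}) is just Cauchy--Binet plus Jacobi and $C_{1,m}=1$, and the case $m=0$, where it reduces to $\det(T^{i-1}(p_{j-1}))_{i,j=1}^{n}=(-1)^{\binom{n}{2}}\prod_{j=0}^{n-2}\Omega_{j}$, are convenient normalization checks. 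Throughout, the bilinear form enters only through $\mu_{j}^{i}=\langle r_{j},\psi_{i}\rangle$ and the determinantal formula (\ref{fdpo2}), so the argument remains valid in the full generality of Theorem~\ref{mth}.
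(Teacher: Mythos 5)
Your reduction is sound as far as it goes: expanding $p_{m+j-1}$ along the last column of (\ref{fdpo2}), applying Cauchy--Binet to the left-hand side, and on the right-hand side using that the unit lower triangular Toeplitz matrices $(r_{p-q})$ and $(s_{p-q})$ are mutually inverse (this is exactly $\Psi_r\Psi_s=1$) together with Jacobi's complementary-minor theorem does produce the two expansions you state, and I can confirm your sign $(-1)^{mn+\binom{m+n}{2}+\sum_{l\in L}l}$ as well as the consistency checks ($n=1$, and $L=\{m,\dots,m+n-1\}$). The difficulty is that everything then hinges on the ``master'' identity
$\det\bigl(M_{m+j-1,l_a}\bigr)_{j,a=1}^{n}=(-1)^{\binom{n}{2}+mn+\binom{m+n}{2}}\bigl(\prod_{j=0}^{n-2}\Omega_{m+j}\bigr)N(L^{c})$,
and this you do not prove: you only describe a plan (border the minors of unequal sizes into one rectangular moment matrix, box a suitable set, apply Theorem \ref{lvti} once, collect signs) and you yourself flag that the ambient matrix, the boxed configuration and the signs are not pinned down. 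That identity is precisely where the combinatorial content of Theorem \ref{mth} sits --- it is the analogue, in your organization, of the paper's single Turnbull move --- so leaving it as a sketch leaves the proof incomplete. (For what it is worth, the identity is true; it is a Bazin/Sylvester-type compound-determinant statement and your small cases check out, but verifying it in general is not a routine bookkeeping step, particularly because the minors $M_{m+j-1,l}$ have different sizes and different row ranges, so the bordering itself has to be designed and its effect on $N(L^c)$ and on the sign tracked.)

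It is also worth comparing with the paper's route, which sidesteps the term-by-term granularity you impose. There, one forms the bordered $(m+n)\times(m+3n-2)$ matrix whose columns are $\mathbf{1},\dots,\mathbf{n-1},\mu^0,\dots,\mu^{m+n-2},\mathbf{r},T(\mathbf{r}),\dots,T^{n-1}(\mathbf{r})$, writes the whole left-hand determinant of (\ref{mrb}) as one tableau, and a single application of Theorem \ref{lvti} (with $R$ the last row) factors off $\Omega_{m+n-2}\cdots\Omega_{m}$ at once, leaving one $(m+n)\times(m+n)$ determinant $\Theta(x)$; the relation $\sum_j s_j r_{k-j}=0$ is then exploited not through Jacobi's theorem but through $m$ rounds of row operations that turn the top rows into rows of $q$'s with trailing zeros, whence the factor $(-1)^{\binom{n}{2}}$ and the $q$-determinant. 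If you want to complete your argument, the most economical fix is probably to prove your master identity by exactly this kind of bordered-tableau Turnbull move (or by induction on $n$ via Sylvester's determinant identity); but as written, the key step is asserted rather than established.
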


\begin{proof}

The proof is similar to that of Leclerc for Theorem \ref{tli} in the Introduction (see \cite{Le2}).

Let us introduce the $(m+n)\times(m+3n-2)$ matrix
$$
\left(\begin{smallmatrix}
1&0&\cdots &0&\mu^0_{m+n-1}&\mu^1_{m+n-1}&\cdots &\mu^{m+n-2}_{m+n-1}&r_{m+n-1}&T(r_{m+n-1})&\cdots &T^{n-1}(r_{m+n-1})\\
0&1&\cdots &0&\mu^0_{m+n-2}&\mu^1_{m+n-2}&\cdots & \mu^{m+n-2}_{m+n-2}&r_{m+n-2}&T(r_{m+n-2})&\cdots &T^{n-1}(r_{m+n-2}) \\
\vdots&\vdots& &\vdots&\vdots &\vdots && \vdots&\vdots& \vdots & &\vdots\\
0&0&\cdots &1&\mu^0_{m+1}&\mu^1_{m+1}&\cdots & \mu^{m+n-2}_{m+1}&r_{m+1}&T(r_{m+1})&\cdots &T^{n-1}(r_{m+1})
\\
0&0&\cdots &0&\mu^0_{m}&\mu^1_{m}&\cdots & \mu^{m+n-2}_{m}&r_{m}&T(r_{m})&\cdots &T^{n-1}(r_{m})\\
\vdots&\vdots& &\vdots&\vdots &\vdots && \vdots&\vdots& \vdots & &\vdots\\
0&0&\cdots &0&\mu^0_{0}&\mu^1_{0}&\cdots & \mu^{m+n-2}_{0}&1&0&\cdots &0
\end{smallmatrix} \right).
$$
The column vectors of this matrix will be denoted from left to right by:
$$
\mathbf{1},\mathbf{2},\cdots , \mathbf{n-1},\mu^0,\mu^1\cdots, \mu^{m+n-2},\mathbf{r},\mathbf{T(r)},\cdots, \mathbf{T^{n-1}(r)}.
$$
Using the notation of Section 3, the left-hand side of (\ref{mrb}) is written as
$$
\left|\begin{matrix}[\mathbf{1} \mathbf{2}\cdots \mathbf{n-1}\mu^0\cdots \mu^{m-1}\mathbf{r}] &
[\mathbf{1} \mathbf{2}\cdots \mathbf{n-2}\mu^0\cdots \mu^{m}\mathbf{r}]&\cdots &[\mu^0\cdots \mu^{m+n-2}\mathbf{r}] \\
[\mathbf{1} \mathbf{2}\cdots \mathbf{n-1}\mu^0\cdots \mu^{m-1}\mathbf{T(r)}] &
[\mathbf{1} \mathbf{2}\cdots \mathbf{n-2}\mu^0\cdots \mu^{m}\mathbf{T(r)}]&\cdots &[\mu^0\cdots \mu^{m+n-2}\mathbf{T(r)}]\\
\vdots&\vdots& &\vdots \\
[\mathbf{1} \cdots \mathbf{n-1}\mu^0\cdots \mu^{m-1}\mathbf{T^{n-1}(r)}] &
[\mathbf{1} \cdots \mathbf{n-2}\mu^0\cdots \mu^{m}\mathbf{T^{n-1}(r)}]&\cdots &[\mu^0\cdots \mu^{m+n-2}\mathbf{T^{n-1}(r)}]
\end{matrix}\right|
$$
Now using the transformation of Theorem \ref{lvti} with $R$ being the last row, this tableau is equal to
\begin{align*}
&\framebox{$\begin{matrix} \mathbf{1}& \mathbf{2} &\cdots &
\mathbf{n-2} &\mathbf{n-1} &\mu^0 & \mu^1 & \cdots& \mu^{m-1}&\framebox{$\mu^m$} \\
\mathbf{1}& \mathbf{2} &\cdots &
\mathbf{n-2} &\mu^0 & \mu^1 &\mu^2& \cdots& \mu^{m}&\framebox{$\mu^{m+1}$}\\
\vdots &\vdots && & & & && \vdots &\vdots \\
\mathbf{1}& \mu^0 &\cdots &
\cdots &\cdots & \cdots &\cdots& \cdots& \mu^{m+n-3}&\framebox{$\mu^{m+n-2}$}\\
\framebox{$\mu^0$}& \framebox{$\mu^{1}$} &\cdots &
\cdots &\framebox{$\mu^{m-1}$} & \mathbf{r} &\mathbf{T(r)}& \cdots& \mathbf{T^{n-2}(r)}&\mathbf{T^{n-1}(r)}
\end{matrix}$}\\
&\hspace{1cm}=
\framebox{$\begin{matrix} \mathbf{1}& \mathbf{2} &\cdots &
\mathbf{n-2} &\mathbf{n-1} &\mu^0 & \mu^1 & \cdots& \mu^{m-1}&\mu^m \\
\mathbf{1}& \mathbf{2} &\cdots &
\mathbf{n-2} &\mu^0 & \mu^1 &\mu^2& \cdots& \mu^{m}&\mu^{m+1}\\
\vdots &\vdots && & & & && \vdots &\vdots \\
\mathbf{1}& \mu^0 &\cdots &
\cdots &\cdots & \cdots &\cdots& \cdots& \mu^{m+n-3}&\mu^{m+n-2}\\
\mu^0& \mu^{1} &\cdots &
\cdots &\mu^{m-1} & \mathbf{r} &\mathbf{T(r)}& \cdots& \mathbf{T^{n-2}(r)}&\mathbf{T^{n-1}(r)}
\end{matrix}$}
\end{align*}
Note that all boxes have been deleted in the second tableau because all the
other permutations of the letters enclosed give rise to tableaux with two equal
letters on some row, which are therefore equal to zero in view of the skew-symmetry of the determinant. Thus, we have rewritten the Wronskian type determinant of (\ref{mrb}) as a product of $n$ determinants, where only the last one depends on the variable $x$. Explicitly, we have obtained that the left-hand side of (\ref{mrb}) is equal to
$$
\Omega_{m+n-2}\Omega_{m+n-3}\cdots \Omega _m\Theta(x)
$$
where
$$
\Theta(x)=\left|\begin{matrix}
\mu^0_{m+n-1}&\mu^1_{m+n-1}&\cdots &\mu^{m-1}_{m+n-1}&r_{m+n-1}&T(r_{m+n-1})&\cdots &T^{n-1}(r_{m+n-1})\\
\mu^0_{m+n-2}&\mu^1_{m+n-2}&\cdots &\mu^{m-1}_{m+n-2}&r_{m+n-2}&T(r_{m+n-2})&\cdots &T^{n-1}(r_{m+n-2})\\
\vdots &\vdots&& \vdots &\vdots&\vdots& & \vdots \\
\mu^0_{1}&\mu^1_{1}&\cdots &\mu^{m-1}_{1}&r_{1}&1&\cdots &0\\
\mu^0_{0}&\mu^1_{0}&\cdots &\mu^{m-1}_{0}&1&0&\cdots &0
\end{matrix}\right|,
$$
and $\Omega _n$ is defined by (\ref{defome}).
Taking into account that $T(r_n)=r_{n-1}$, we have
$$
\Theta(x)=\left|\begin{matrix}
\mu^0_{m+n-1}&\mu^1_{m+n-1}&\cdots &\mu^{m-1}_{m+n-1}&r_{m+n-1}&r_{m+n-2}&\cdots &r_{m}\\
\mu^0_{m+n-2}&\mu^1_{m+n-2}&\cdots &\mu^{m-1}_{m+n-2}&r_{m+n-2}&r_{m+n-3}&\cdots &r_{m-1}\\
\vdots &\vdots&& \vdots &\vdots&\vdots& & \vdots \\
\mu^0_{1}&\mu^1_{1}&\cdots &\mu^{m-1}_{1}&r_{1}&1&\cdots &0\\
\mu^0_{0}&\mu^1_{0}&\cdots &\mu^{m-1}_{0}&1&0&\cdots &0
\end{matrix}\right|.
$$
We now add to the first row of this determinant the linear combination of the next $m+n-1$ rows
multiplying the $j$-th row by $s_{j-1}(x)$, $j=2,\cdots, m+n$. (\ref{defqn}) and (\ref{defsn}) then give
$$
\Theta(x)=\left|\begin{matrix}
q^0_{m+n-1}&q^1_{m+n-1}&\cdots &q^{m-1}_{m+n-1}&0&0&\cdots &0\\
\mu^0_{m+n-2}&\mu^1_{m+n-2}&\cdots &\mu^{m-1}_{m+n-2}&r_{m+n-2}&r_{m+n-3}&\cdots &r_{m-1}\\
\vdots &\vdots&& \vdots &\vdots&\vdots& & \vdots \\
\mu^0_{1}&\mu^1_{1}&\cdots &\mu^{m-1}_{1}&r_{1}&1&\cdots &0\\
\mu^0_{0}&\mu^1_{0}&\cdots &\mu^{m-1}_{0}&1&0&\cdots &0
\end{matrix}\right|.
$$
In a similar way, we add to the second row of this determinant the linear combination of the next $m+n-2$ rows
multiplying the $j$-th row by $s_{j-2}(x)$, $j=3,\cdots, m+n$. (\ref{defqn}) and (\ref{defsn}) then give
$$
\Theta(x)=\left|\begin{matrix}
q^0_{m+n-1}&q^1_{m+n-1}&\cdots &q^{m-1}_{m+n-1}&0&0&\cdots &0\\
q^0_{m+n-2}&q^1_{m+n-2}&\cdots &q^{m-1}_{m+n-2}&0&0&\cdots &0\\
\mu^0_{m+n-3}&\mu^1_{m+n-3}&\cdots &\mu^{m-1}_{m+n-3}&r_{m+n-3}&r_{m+n-4}&\cdots &r_{m-2}\\
\vdots &\vdots&& \vdots &\vdots&\vdots& & \vdots \\
\mu^0_{1}&\mu^1_{1}&\cdots &\mu^{m-1}_{1}&r_{1}&1&\cdots &0\\
\mu^0_{0}&\mu^1_{0}&\cdots &\mu^{m-1}_{0}&1&0&\cdots &0
\end{matrix}\right|
$$
Proceeding in the same way $m$ times we have
$$
\Theta(x)=\left|\begin{matrix}
q^0_{m+n-1}&q^1_{m+n-1}&\cdots &q^{m-1}_{m+n-1}&0&0&\cdots &0\\
\vdots &\vdots&& \vdots &\vdots&\vdots& & \vdots \\
q^0_{n}&q^1_{n}&\cdots &q^{m-1}_{n}&0&0&\cdots &0\\
\mu^0_{n-1}&\mu^1_{n-1}&\cdots &\mu^{m-1}_{n-1}&r_{n-1}&r_{n-2}&\cdots &1\\
\vdots &\vdots&& \vdots &\vdots&\vdots& & \vdots \\
\mu^0_{0}&\mu^1_{0}&\cdots &\mu^{m-1}_{0}&1&0&\cdots &0
\end{matrix}\right| .
$$
And hence
$$
\Theta(x)=(-1)^{\binom{n}{2}}\left|\begin{matrix}
q^0_{m+n-1}&q^1_{m+n-1}&\cdots &q^{m-1}_{m+n-1}\\
\vdots &\vdots&& \vdots  \\
q^0_{n}&q^1_{n}&\cdots &q^{m-1}_{n}
\end{matrix}\right| .
$$
This finishes the proof of the Theorem.
\end{proof}

Theorem \ref{mti} in the Introduction is an easy corollary of Theorem \ref{mth} just taking into account the expression (\ref{fdpop}) for the monic orthogonal polynomials $\hat p_n$ with respect to a measure and that $p_n=\xi _n \hat p_n$.

\section{Specialization to some relevant operators}

For some relevant operators the estructure of the determinant in the right hand side of (\ref{mrby}) is richer. Indeed, as we can see in Leclerc
identity (\ref{leci}), for the derivative the polynomials $\psi_i$, $i\ge 0$, can be taken so that this determinant has a Hankel structure.  For the first order linear operator $\Delta $ there are also choices of the polynomials $\psi_i$, $i\ge 0$, so that this determinant has almost a Hankel structure. There also are other operators for which the polynomials $\psi_i$, $i\ge 0$, can be taken so that the determinant in the right hand side of (\ref{mrby}) has a Toeplitz structure. The operators defined by $T_\mu (p_n)=p_{n-1}$, where $(p_n)_n$ is a sequence of orthogonal polynomials with respect to any given measure $\mu$, are among these operators.

On the other hand, these relevant operators have associated distinguished sequences of orthogonal polynomials for which the polynomials $\psi_i$, $i\ge 0$, can be taken so that the polynomials $q_n^i$, $n,i\ge 0$, are again orthogonal. That is the case of the classical families of  Laguerre and Jacobi for the derivative and the classical discrete families of Charlier, Meixner, Krawtchouk and Hahn for the first order difference operator.
For these examples, the polynomials $q_n^i$ belong, up to a change of variable, to the same family (but with different parameters). This will allow us to prove our conjectures in \cite{du}. For certain measures $\mu$ (for instance, Laguerre, ultraspherical, Charlier and Meixner measures) and the corresponding operator $T_\mu$ (defined as above), the polynomials $\psi_i$, $i\ge 0$, can also be taken so that the polynomials $q_n^i$, $n,i\ge 0$, belong, up to a change of variable, to the same family (but with different parameters).

\subsection{The derivative}
In the case of the derivative, $T=d/dx$, we can take
\begin{align}\label{defrn1}
r_n(x)&=\frac{(x-x_0)^n}{n!},\\\label{defpsi1}
\psi_i(x)&=(x-x_0)^i,
\end{align}
where $x_0$ is a fixed number.
A simple computation then shows that
\begin{equation}\label{defsn1}
s_n(x)=\frac{(-x+x_0)^n}{n!},
\end{equation}
(see (\ref{defrn}) and (\ref{defsn}) for the definition of the polynomials $r_n, s_n$, $n\ge 0$.)
If we write $\mu_n=\int (x-x_0)^nd\mu$ for the generalized moments of the measure $\mu$, we then have
\begin{equation}\label{mijd1}
\mu^i_j=\int \psi_ir_jd\mu=\int \frac{(x-x_0)^{i+j}}{j!}d\mu=\frac{\mu_{i+j}}{j!}.
\end{equation}
From where it easily follows that $q_n^{i+1}-(x-x_0)q_n^i=(n+1)q_{n+1}^i$, and for $x_0=0$ $q_n=n!q^0_n$,  where $q_n$, $q_n^i$, $n,i\ge 0$, are
the polynomials defined by (\ref{defqnl}) and (\ref{defqny}), respectively. This shows that the determinant in the right hand side
of (\ref{mrby}) can be reduced to the Hankel determinant in the right hand side of Leclerc identity (\ref{leci}).

We now check that for the classical families of  Laguerre and Jacobi, the number $x_0$ and the sequence of polynomials  $(\psi_n)_n$ can be chosen such that the polynomials $q_n^i$ belong, up to a change of variable, to the same family (but with different parameters).

\subsubsection{Jacobi polynomials}\label{ss5.1}
For $\alpha,\beta \in \RR , \alpha+\beta \not=-1,-2,\cdots$, we use the standard definition of the Jacobi polynomials $(P_{n}^{\alpha,\beta})_n$ in terms of the hypergeometric function
\begin{equation}\label{defjac}
P_{n}^{\alpha,\beta}(x)=\binom{n+\alpha}{n}\sum _{j=0}^n \frac{2^{-j}(n+\alpha+\beta+1)_j}{(\alpha +1)_j}\binom{n}{j}(x-1)^{j}
\end{equation}
(that and the next formulas can be found in \cite{EMOT}, pp. 169-173).

For $\alpha,\beta, \alpha+\beta \not =-1,-2,\cdots$,
they are orthogonal with respect to a measure $\mu _{\alpha,\beta}=\mu _{\alpha,\beta}(x)dx$, which we
normalize by taking
$$
\int \mu _{\alpha,\beta}(x)dx=2^{\alpha+\beta+1}\frac{\Gamma(\alpha+1)\Gamma(\beta +1)}{\Gamma(\alpha+\beta+2)}.
$$
We write $c_{\alpha,\beta}$ for the constant in the right hand side of this identity.

The measure $\mu _{\alpha,\beta}$ is positive only when
$\alpha ,\beta >-1$, and then
\begin{equation}\label{jacw}
\mu_{\alpha,\beta}(x) =(1-x)^\alpha (1+x)^{\beta}, \quad -1<x<1.
\end{equation}
We now show that by choosing $x_0=1$, the polynomials $q_n^i$, $n,i\ge 0$, are, up to a change of variable, again Jacobi polynomials (but with different parameters). Indeed, we have that
$$
\mu_n^0=\int (x-1)^n\mu _{\alpha,\beta}(x)dx=c_{\alpha,\beta}\frac{(-2)^{n}(\alpha+1)_n}{(\alpha+\beta+2)_n}.
$$
Taking into account the definition of the polynomials $q_n^i$ (\ref{defqny}) and (\ref{mijd1}), we have
\begin{align}\nonumber
q_n^i(x)&=c_{\alpha,\beta}\sum_{j=0}^n\frac{(-2)^{i+j}(\alpha+1)_{i+j}}{(\alpha+\beta+2)_{i+j}j!}\frac{(1-x)^{n-j}}{(n-j)!}\\\label{qnij=j}
&=c_{\alpha,\beta}\frac{(-2)^i(\alpha+1)_i}{n!(\alpha+\beta+2)_i}\sum_{j=0}^n\frac{(-2)^{j}(\alpha+i+1)_{j}}{(\alpha+\beta+i+2)_{j}}\binom{n}{j}(1-x)^{n-j}.
\end{align}
The definition (\ref{defjac}) for the Jacobi polynomials gives
\begin{align}\nonumber
(1-x)^nP_{n}^{\alpha+\beta+i+1,-n-\beta-1}\left(\frac{x+3}{x-1}\right)&=\binom{n+\alpha+\beta+i+1}{n}\\\label{qnij=j2} &\hspace{1cm}\times\sum _{j=0}^n \frac{(-2)^{j}(\alpha+i+1)_j}{(\alpha + \beta+i+2)_j}\binom{n}{j}(1-x)^{n-j}.
\end{align}
Comparing (\ref{qnij=j}) and (\ref{qnij=j2}), we deduce that
$$
q_n^i(x)=\frac{(-2)^ic_{\alpha,\beta}(\alpha+1)_i}{(\alpha+\beta+2)_{n+i}}(1-x)^nP_{n}^{\alpha+\beta+i+1,-n-\beta-1}\left(\frac{x+3}{x-1}\right).
$$

\subsubsection{Laguerre polynomials}
For $\alpha\in \RR $, we use the standard definition of the Laguerre polynomials $(L_{n}^{\alpha})_n$
\begin{equation}\label{deflap}
L_{n}^{\alpha}(x)=\sum _{j=0}^n\binom{n+\alpha}{n-j}\frac{(-x)^{j}}{j!}
\end{equation}
(that and the next formulas can be found in \cite{EMOT}, pp. 188-192).

For $\alpha \not =-1,-2,\cdots $, they are  orthogonal with respect to a measure $\mu_\alpha=\mu_\alpha(x)dx$,
which we normalize by taking
$$
\int \mu _{\alpha}(x)dx=\Gamma(\alpha+1).
$$
The measure $\mu _{\alpha}$ is positive only when $\alpha >-1$,
and then
\begin{equation}\label{lagw}
\mu_\alpha (x) =x^\alpha e^{-x}, \quad 0<x.
\end{equation}
One can see that by choosing $x_0=0$, the polynomials $q_n^i$, $n,i\ge 0$, are, up to a change of variable, again Laguerre polynomials (but with different parameters). Indeed, proceeding as  before for the Jacobi polynomials, we deduce that
$$
q_n^i(x)= (-1)^n\Gamma(\alpha+i+1)L_{n}^{-\alpha-i-n-1}(-x).
$$

\subsection{The first order difference operator $\Delta$}
For the first order difference operator, $T=\Delta$,
we take
\begin{equation}\label{defrn2}
r_n(x)=\binom{x}{n},\quad \psi_i(x)=\binom{x}{i}.
\end{equation}
A simple computation then shows that
\begin{equation}\label{defsn2}
s_n(x)=\binom{-x}{n}.
\end{equation}
Using (\ref{abc2}), we get
\begin{align*}
\mu^i_j&=\int \psi_ir_jd\mu=\int \binom{x}{i}\binom{x}{j}d\mu=\sum_{l=0}^i\binom{j}{i-l}\int \binom{x-j}{l}\binom{x}{j}d\mu \\
&=\sum_{l=0}^i\binom{j}{i-l}\int \binom{j+l}{l}\binom{x}{j+l}d\mu
=\sum_{l=0}^i\binom{j}{i-l} \binom{j+l}{l}\mu ^0_{j+l}.
\end{align*}
Using now (\ref{abc1}), we find
\begin{align}\nonumber
q_n^i(x)&=\sum_{j=0}^n\mu^i_j\binom{-x}{n-j}=\sum_{j=0}^n\sum_{l=0}^i\binom{j}{i-l} \binom{j+l}{l}\mu ^0_{j+l}\binom{-x}{n-j}\\\nonumber
&=\sum_{g=i}^{n+i}\mu ^0_{g}\sum_{j=0}^{\min(n,g)}\binom{j}{i+j-g}\binom{g}{g-j}\binom{-x}{n-j}=\sum_{g=i}^{n+i}\mu ^0_{g}\binom{g}{i}\binom{-x+i}{n+i-g}\\\label{qniod}
&=\sum_{g=0}^{n}\mu ^0_{g+i}\binom{g+i}{i}\binom{-x+i}{n-g}.
\end{align}
This formula for $q_n^i$ allows us to show that the determinant in the right hand side of (\ref{mrby})
has almost a Hankel structure. Indeed, a simple computation using (\ref{qniod}) gives that
$$
q_n^{i+1}(x)+\frac{-x+i+1}{i+1}q_n^i(x)=\frac{n+1}{i+1}q_{n+1}^i (x-1).
$$
If we write $q_n=n!q^0_n$, it is easy to see that the determinant in the right hand side of (\ref{mrby}) reduces to
$$
\frac{1}{n!^m(1+i)^{m(m-1)/2}}\det\left(q_{n+i+j-2}(x-j+1)\right)_{i,j=1}^m.
$$

For the classical discrete families of Charlier, Meixner, Krawtchouk and Hahn, the polynomials
$q_n^i$, $n,i\ge 0$, are actually polynomials of the same family (but with different parameters) in the variable $-x+i$.
This will allow us to prove our conjectures in \cite{du} on Casorati determinants for Charlier, Meixner and Krawtchouk orthogonal polynomials.
Although we will consider real parameters, the identities (\ref{c1.n}), (\ref{me.n}), (\ref{kr.n}), (\ref{ha.n}), (\ref{dha.n}), (\ref{rac.n}) and (\ref{wil.n})
hold also for complex parameters (since both size of these identities are analytic functions of the parameters).

\subsubsection{Charlier polynomials} \label{ss5.2.1}
The monic Charlier polynomials are defined by
\begin{equation}\label{chpol}
\hat c_{n}^{a}(x)=\sum _{j=0}^n (-a)^{n-j}\binom{n}{j}\binom{x}{j}j!,\quad a\not =0,
\end{equation}
(that and the next formulas can be found in \cite{Ch}, pp. 170-1; see also \cite{KLS}, pp, 247-9 or \cite{NSU}, ch. 2).
The Charlier polynomials $(\hat  c_n^a)_n$ are orthogonal with respect to the discrete measure
\begin{equation}\label{chw}
\rho_a=\sum _{x\in \NN}\frac{a^x}{x!}\delta _x.
\end{equation}
The norm of the monic Charlier polynomials is given by
\begin{equation}\label{nmch}
\Vert \hat c_n^a\Vert ^2=e^an!a^n.
\end{equation}
We now show that the polynomials $q_n^i$, $n,i\ge 0$, are, up to a linear change of variable, again Charlier polynomials (but with parameter $-a$). Indeed, a simple computation gives $\mu^0_j=\int\binom{x}{j}d\rho_a=e^aa^j/j!$. Using (\ref{qniod}) we get
\begin{align}\nonumber
q_n^i(x)&=e^a\sum_{g=0}^{n}\frac{a^{g+i}}{(g+i)!}\binom{g+i}{i}\binom{-x+i}{n-g}\\\nonumber
&=\frac{e^aa^i}{i!n!}\sum_{g=0}^{n}a^g\binom{n}{n-g}\binom{-x+i}{n-g}(n-g)!\\\label{qnich}
&=\frac{e^aa^i}{i!n!}\hat c_n^{-a}(-x+i).
\end{align}
Consider now the following normalization of the Charlier polynomials
\begin{equation}\label{chno}
c_n^a(x)=\frac{\hat c_n^a(x)}{n!}, \quad n\ge 0,
\end{equation}
and the Casorati Charlier determinant
$$
C^a_{n,m,x}=\det\big(c_{m+j-1}^a(x+i-1)\big)_{i,j=1}^n
$$

\begin{corollary}
For $n,m\ge 0$ and $a\not = 0$, we have
\begin{equation}\label{c1.n}
C^a_{n,m,x}=(-1)^{nm}C^{-a}_{m,n,-x}.
\end{equation}
Moreover, conjecture 1.1 in \cite{du} (see (\ref{c1.1})) holds for $n,k,m\ge 0$ and $a\not = 0$.
\end{corollary}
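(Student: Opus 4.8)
The plan is to deduce both identities directly from Theorem~\ref{mti}, specialised to $T=\Delta$ with the data used just above: the sequences $r_n(x)=\binom{x}{n}$, $s_n(x)=\binom{-x}{n}$ (see (\ref{defrn2}), (\ref{defsn2})), the test polynomials $\psi_i(x)=\binom{x}{i}$, the measure $\mu=\rho_a$ of (\ref{chw}), and as the orthogonal family the monic Charlier polynomials $p_n=\hat c_n^a$, so that the leading coefficient $\xi_n$ of $p_n$ equals $1$ while $\sigma_n=1/n!$. Then $\mu_j^i=\int\binom{x}{i}\binom{x}{j}\,d\rho_a$ and, by (\ref{qnich}), $q_n^i(x)=\dfrac{e^aa^i}{i!\,n!}\,\hat c_n^{-a}(-x+i)$. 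Moreover, since $\psi_n$ has real leading coefficient $\upsilon_n=1/n!$, formula (\ref{oitnm}) together with the norm (\ref{nmch}) gives
$$\Omega_{m-1}=(-1)^{\binom{m}{2}}\frac{e^{am}a^{\binom{m}{2}}}{\prod_{j=0}^{m-1}j!}.$$

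Next I would rewrite each side of (\ref{mrby}) as a Casorati determinant of Charlier polynomials. On the left, writing $\Delta^{i-1}$ as a linear combination of the shifts $p\mapsto p(\cdot+l)$, $0\le l\le i-1$, in which $p(\cdot+i-1)$ occurs with coefficient $1$, one sees that $\big(\Delta^{i-1}p_{m+j-1}(x)\big)_{i,j=1}^n$ is obtained from $\big(p_{m+j-1}(x+i-1)\big)_{i,j=1}^n$ by a lower-triangular unipotent row transformation, so the two determinants coincide; replacing $\hat c^a_{m+j-1}$ by $(m+j-1)!\,c^a_{m+j-1}$ then yields $\det\big(\Delta^{i-1}p_{m+j-1}(x)\big)_{i,j=1}^n=\big(\prod_{l=m}^{m+n-1}l!\big)C^a_{n,m,x}$. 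On the right, substituting the formula for $q_n^i$, factoring $\frac{e^aa^{j-1}}{(j-1)!}$ out of column $j$ and $\frac1{(n+i-1)!}$ out of row $i$, transposing, and again passing from $\hat c^{-a}$ to $c^{-a}$, I would obtain $\det\big(q^{j-1}_{n+i-1}(x)\big)_{i,j=1}^m=e^{am}\frac{a^{\binom{m}{2}}}{\prod_{j=0}^{m-1}j!}\,C^{-a}_{m,n,-x}$.

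Because $\xi_n=1$ and $\sigma_n=1/n!$, the constant in Theorem~\ref{mti} is $C_{n,m}=(-1)^{mn+\binom{m}{2}}\prod_{l=m}^{m+n-1}l!$. Feeding the three reductions into (\ref{mrby}) makes the common factor $\prod_{l=m}^{m+n-1}l!$ cancel, leaving
$$C^a_{n,m,x}=\frac{(-1)^{mn+\binom{m}{2}}e^{am}a^{\binom{m}{2}}}{\Omega_{m-1}\prod_{j=0}^{m-1}j!}\,C^{-a}_{m,n,-x},$$
and inserting the value of $\Omega_{m-1}$ found above collapses the prefactor to $(-1)^{mn}$; this is (\ref{c1.n}), an identity of polynomials in $x$. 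I expect this constant bookkeeping to be the only genuinely delicate part of the argument: everything is forced, but one must keep careful track of the factorials, the powers of $e^a$ and of $a$, and the signs introduced by the transposition in the $q$-determinant.

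To obtain the conjectured identity (\ref{c1.1}) I would then use the self-duality of the Charlier polynomials. Since in (\ref{chpol}) the product $\binom{n}{j}\binom{x}{j}j!$ equals $n!\,x!/(j!(n-j)!(x-j)!)$, which is symmetric in $n$ and $x$, one has $\hat c_n^a(x)=(-a)^{n-x}\hat c_x^a(n)$ for all nonnegative integers $n,x$. Applying this to every entry of $\det\big(\hat c^a_{k+j-1}(m+i-1)\big)_{i,j=1}^n$ and pulling the factor $(-a)^{(k+j-1)-(m+i-1)}$ out along rows and columns — the part $(-a)^{j-i}$ contributing $1$ — gives $(-a)^{n(k-m)}\det\big(\hat c^a_{m+j-1}(k+i-1)\big)_{i,j=1}^n=(-a)^{n(k-m)}\big(\prod_{l=m}^{m+n-1}l!\big)C^a_{n,m,k}$. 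Now (\ref{c1.n}) at $x=k$ turns $C^a_{n,m,k}$ into $(-1)^{nm}C^{-a}_{m,n,-k}$; writing $C^{-a}_{m,n,-k}=\big(\prod_{l=n}^{n+m-1}l!\big)^{-1}\det\big(\hat c^{-a}_{n+j-1}(-k+i-1)\big)_{i,j=1}^m$ and simplifying with $a^{nm}=(-1)^{nm}(-a)^{nm}$, $\prod_{j=2}^{N-1}j!=\prod_{j=0}^{N-1}j!$ and $\prod_{l=m}^{m+n-1}l!/\prod_{l=n}^{n+m-1}l!=\prod_{j=0}^{n-1}j!/\prod_{j=0}^{m-1}j!$, the resulting constant is exactly $(-a)^{nk}\prod_{j=2}^{n-1}j!/(a^{nm}\prod_{j=2}^{m-1}j!)$, which is (\ref{c1.1}). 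The cases $n=0$ or $m=0$ are trivial, both determinants being empty.
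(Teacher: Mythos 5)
Your proposal is correct and follows essentially the same route as the paper: specialize (\ref{mrby}) to $T=\Delta$ with the Charlier data, use (\ref{qnich}) and the normalization (\ref{chno}) to get (\ref{c1.n}), and then invoke the Charlier duality (your $\hat c_n^a(x)=(-a)^{n-x}\hat c_x^a(n)$ is exactly the paper's duality formula) to recover (\ref{c1.1}) at integer $x$. The constant bookkeeping you carry out explicitly (the values of $\Omega_{m-1}$, $C_{n,m}$ and the factorial ratios) is exactly the "straightforward" verification the paper omits, and it checks out.
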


\begin{proof}
The identity (\ref{c1.n}) follows straightforwardly from the identity (\ref{mrby}) for the first order difference operator $\Delta$, (\ref{qnich})
and the definition (\ref{chno}) of the polynomials $(c_n^a)_n$.

Using the duality
$$
\frac{(-1)^n}{a^n}\hat c_n^a(m)=\frac{(-1)^m}{a^m}\hat c_m^a(n),\quad n,m\ge 0,
$$
we can easily rewrite Conjecture 1.1 in \cite{du} (see (\ref{c1.1})) as the particular case of identity (\ref{c1.n}) when $x$ is a positive integer. This proves Conjecture 1.1 in \cite{du}.
\end{proof}

\subsubsection{Meixner polynomials}\label{ss5.2.2}
The monic Meixner polynomials are defined by
\begin{equation}\label{mpol}
\hat m_{n}^{a,c}(x)=\frac{(c)_n}{(1-1/a)^n}\sum _{j=0}^n \frac{(1-1/a)^j}{(c)_{j}}\binom{n}{j}\binom{x}{j}j!,\quad a\not =0, 1
\end{equation}
(we have taken a slightly different normalization from the one used in \cite{Ch}, pp. 175-7, from where
the next formulas can be easily derived; see also \cite{KLS}, pp, 234-7 or \cite{NSU}, ch. 2).
For $a\not =0,1$ and $c\not =0,-1,-2,\cdots $, they are always orthogonal with respect to a (possibly signed) measure $\rho_{a,c}$, which we
normalize by taking $\int d\rho_{a,c} =\Gamma(c)$. For $0<\vert a\vert<1$ and $c\not =0,-1,-2,\cdots $, we have
\begin{equation}\label{mew}
\rho_{a,c}=(1-a)^c\sum _{x=0}^\infty \frac{a^{x}\Gamma(x+c)}{x!}\delta _x.
\end{equation}
The moment functional $\rho_{a,c}$ can be represented by a positive measure only when $0<a<1$ and $c>0$.
The norm of the monic Meixner polynomials is given by
\begin{equation}\label{nmme}
\Vert \hat m_n^{a,c}\Vert ^2=\frac{a^n}{(1-a)^{2n}}n!\Gamma(c+n).
\end{equation}

A simple computation gives
$$
\mu^0_j=\frac{a^j\Gamma(c+j)}{(1-a)^{j}j!}.
$$
Proceeding as before for the Charlier polynomials, we can prove
that the polynomials $q_n^i$, $n,i\ge 0$, are, up to a linear change of variable, again Meixner polynomials (but with different parameters):
\begin{equation}\label{qnim}
q_n^i(x)=\frac{a^i\Gamma(c+i)}{i!n!(1-a)^i}\hat m_n^{a,-c-n-i+1}(-x+i).
\end{equation}
Consider now the following normalization of the Meixner polynomials
\begin{equation}\label{meno}
\textsl{\textsf{m}}_n^{a,c}(x)=\frac{\hat m_n^{a,c}(x)}{n!}, \quad n\ge 0,
\end{equation}
and the Casorati Meixner determinant
$$
M_{n,m,x}^{a,c}=\det\big(\textsl{\textsf{m}}_{m+j-1}^{a,c}(x+i-1)\big)_{i,j=1}^n.
$$
\begin{corollary}\label{meconjj}
For $n,m\ge 0$ and $a\not = 0, 1$, we have
\begin{equation}\label{me.n}
M_{n,m,x}^{a,c-n-m}=(-1)^{nm}M_{m,n,-x}^{a,2-c}.
\end{equation}
Moreover, Conjecture (3.7) in \cite{du}  holds for $n,k,m\ge 0$ and $a\not =0,1$.
\end{corollary}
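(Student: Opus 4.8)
The plan is to follow the same template used for the Charlier corollary. First I would derive the formula \eqref{qnim} expressing the polynomials $q_n^i$ in terms of monic Meixner polynomials $\hat m_n^{a,-c-n-i+1}(-x+i)$; this is the analogue of \eqref{qnich} and, as the text says, it follows by proceeding as in Subsection \ref{ss5.2.1}: compute $\mu_j^0=a^j\Gamma(c+j)/((1-a)^jj!)$, plug into \eqref{qniod}, and recognize the resulting hypergeometric sum as a Meixner polynomial via the defining formula \eqref{mpol}. Then, with the normalization \eqref{meno}, I would specialize the main identity \eqref{mrby} for $T=\Delta$ to the Meixner measure. On the left-hand side one has (up to the constants $\Omega_{m-1}$ and $C_{n,m}$) the Casorati determinant $\det(\hat m_{m+j-1}^{a,c}(x+i-1))_{i,j=1}^n$, and on the right-hand side one gets $\det(q_{n+i-1}^{j-1}(x))_{i,j=1}^m$, which by \eqref{qnim} is a determinant of Meixner polynomials $\hat m_{n+i-1}^{a,-c-n-i+2}(-x+j-1)$.

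The key bookkeeping step is to track the parameter shifts and the power-of-$n!$ and Gamma-factor normalizations so that, after replacing $c$ by $c-n-m$ in the statement, both sides collapse to the clean symmetric form \eqref{me.n}. Concretely: writing $\mathsf m_n^{a,c}=\hat m_n^{a,c}/n!$, the left side of \eqref{mrby} applied with parameter $c\mapsto c-n-m$ becomes a constant times $M_{n,m,x}^{a,c-n-m}$; the factor \eqref{qnim} turns the right side into a constant times $M_{m,n,-x}^{a,2-c}$ (the parameter $-c-n-i+1$ evaluated at the relevant indices lines up with $2-c$ after the $c\mapsto c-n-m$ substitution and the index shift $n\leftrightarrow m$). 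Matching the $x$-independent prefactors $\Omega_{m-1}$, $C_{n,m}$, the $\Gamma(c+j)$'s coming from $\mu_j^0$, and the $n!$-powers from the normalization should leave exactly the sign $(-1)^{nm}$, just as in the Charlier case where \eqref{c1.n} came out of the analogous comparison. I would present this as a short computation "proceeding as in the proof of Corollary for Charlier polynomials."

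For the second assertion, that Conjecture (3.7) of \cite{du} holds, I would argue exactly as in the Charlier corollary: use the duality relation for Meixner polynomials (the analogue of the stated Charlier duality $(-1)^na^{-n}\hat c_n^a(m)=(-1)^ma^{-m}\hat c_m^a(n)$, namely the classical $\hat m_n^{a,c}(m)$ self-duality relating values at nonnegative integers with swapped arguments, together with the appropriate $a$- and $c$-dependent factor) to rewrite the conjectured identity as the special case of \eqref{me.n} in which $x$ is a nonnegative integer. Since both sides of \eqref{me.n} are analytic (indeed polynomial/rational) in the parameters and in $x$, establishing it for integer $x$ suffices, or conversely \eqref{me.n} for general $x$ already contains the conjecture.

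The main obstacle I anticipate is purely the parameter-matching arithmetic: getting \eqref{qnim} with the correct parameter $-c-n-i+1$ and the precise constant $a^i\Gamma(c+i)/(i!\,n!\,(1-a)^i)$, and then verifying that when this is fed into the determinant on the right of \eqref{mrby} — together with the index substitutions $i\mapsto$ row, $j\mapsto$ column and the replacement $c\mapsto c-n-m$ — the Meixner parameters in every entry genuinely coincide with those of $M_{m,n,-x}^{a,2-c}$ and all the stray Gamma and factorial factors cancel against $\Omega_{m-1}$ and $C_{n,m}$ up to the asserted sign. This is the same kind of check that makes the Charlier case work, but the extra $c$-parameter and the $(1-a)$ powers make it more delicate; I would organize it by first isolating the $x$-dependence (which must match outright) and only afterwards reconciling the constants.
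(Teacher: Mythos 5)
Your outline follows the paper's route (derive \eqref{qnim}, feed it into \eqref{mrby} for $T=\Delta$, then use the Meixner duality to get Conjecture (3.7)), and the duality step at the end is exactly what the paper does. But there is a genuine gap at the central "bookkeeping" step. You assert that after the substitution $c\mapsto c-n-m$ and transposition the entries of the right-hand determinant "genuinely coincide" with those of $M_{m,n,-x}^{a,2-c}$. They do not: by \eqref{qnim} the $(i,j)$ entry of $\det\bigl(q_{n+i-1}^{j-1}(x)\bigr)_{i,j=1}^m$ involves $\hat m_{n+i-1}^{a,-c-n-i-j+3}(-x+j-1)$, so the Meixner parameter varies with \emph{both} indices; after $c\mapsto c-n-m$ it becomes $-c+m-i-j+3$, which equals $2-c$ only when $i+j=m+1$, i.e.\ on the anti-diagonal. (Compare the identity \eqref{midu} in Section 7.1, where this mixed-parameter determinant appears in exactly that raw form.) This is precisely where the Meixner case differs from the Charlier case you are modelling on: for Charlier the parameter of $\hat c_n^{-a}$ is index-independent, so the determinant matches $C^{-a}_{m,n,-x}$ outright, whereas here one needs a further idea. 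The paper supplies it via the two contiguous relations
\begin{align*}
\textsl{\textsf{m}}_n^{a,c-1}(x)&=\textsl{\textsf{m}}_n^{a,c}(x)-\tfrac{a}{a-1}\textsl{\textsf{m}}_{n-1}^{a,c}(x),\\
\textsl{\textsf{m}}_n^{a,c+1}(x)&=\tfrac{a}{a-1}\textsl{\textsf{m}}_n^{a,c}(x+1)-\tfrac{1}{a-1}\textsl{\textsf{m}}_{n}^{a,c}(x),
\end{align*}
which are used as determinant-preserving column and row operations (shifting $c$ by one at the cost of lowering the degree index, respectively shifting the argument) to bring every entry to the common parameter $2-c$. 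Without these (or an equivalent device) your plan stalls exactly at the step you flagged as "delicate."

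Two smaller points: the main identity \eqref{mrby} requires the polynomials $(\textsl{\textsf{m}}_n^{a,c-n-m})_n$ to be orthogonal, which fails when $c-n-m=0,-1,-2,\dots$; the paper first proves \eqref{me.n} away from these values and then extends by noting both sides are polynomials in $c$ — your analyticity remark is aimed at the conjecture part and should be redirected to cover this degenerate case as well. The duality you invoke should carry the explicit factor $\frac{(a-1)^n}{a^n(1+c)_{n-1}}\hat m_n^{a,c}(m)=\frac{(a-1)^m}{a^m(1+c)_{m-1}}\hat m_m^{a,c}(n)$, as in the paper, to convert Conjecture (3.7) into the integer-$x$ case of \eqref{me.n}.
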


\begin{proof}
Assume first $c-n-m\not =0,-1,-2,\cdots $, so that the polynomials $(\textsl{\textsf{m}}_n^{a,c-n-m})_n$ are orthogonal with respect to a measure.
The identity (\ref{me.n}) then follows from the identity (\ref{mrby}) for the first order difference operator $\Delta$, (\ref{qnim}), the definition (\ref{meno}) of the polynomials $(\textsl{\textsf{m}}_n^{a,c})_n$ and the identities
\begin{align*}
\textsl{\textsf{m}}_n^{a,c-1}(x)&=\textsl{\textsf{m}}_n^{a,c}(x)-\frac{a}{a-1}\textsl{\textsf{m}}_{n-1}^{a,c}(x),\\
\textsl{\textsf{m}}_n^{a,c+1}(x)&=\frac{a}{a-1}\textsl{\textsf{m}}_n^{a,c}(x+1)-\frac{1}{a-1}\textsl{\textsf{m}}_{n}^{a,c}(x).
\end{align*}
Since both sides of the identity (\ref{me.n}) are polynomials in $c$, this identity also holds for $c-n-m=0,-1,-2,\cdots $.

Using the duality
$$
\frac{(a-1)^n}{a^n(1+c)_{n-1}}\hat m_n^{a,c}(m)=\frac{(a-1)^m}{a^m(1+c)_{m-1}}\hat m_m^{a,c}(n),\quad n,m\ge 0,
$$
we can easily rewrite Conjecture (3.7) in \cite{du} as the particular case of identity (\ref{me.n}) when $x$ is a positive integer. This proves Conjecture (3.7) in \cite{du}.
\end{proof}

\subsection{Krawtchouk polynomials}
For $a\not=0,-1$, we write $(\hat k_{n}^{a,N})_n$ for the sequence of monic Krawtchouk polynomials defined by
\begin{equation}\label{krpol}
\hat k_{n}^{a,N}(x)=\frac{(-1)^{n}a^{n}}{(1+a)^{n}}\sum_{j=0}^n(-1)^{j}\frac{(1+a)^{j}(N-n)_{n-j}}{a^{j}}\binom{n}{j}\binom{x}{j}.
\end{equation}
For $a\not=0,-1$ and $N\not=1, 2, \cdots$, they are always orthogonal with respect to a (signed) measure $\rho _{a,N}$ which we normalize by taking $\langle \rho_{a,N},1\rangle=1$.
When $N$ is a positive integer and $a>0$, the first $N$ polynomials are orthogonal with respect to the positive Krawtchouk measure
\begin{equation}\label{krw}
\rho_{a,N}=\frac{\Gamma(N)}{(1+a)^{N-1}}\sum _{x=0}^{N-1} \frac{a^{x}}{\Gamma(N-x)x!}\delta _x.
\end{equation}
A simple computation gives
$$
\mu^0_j=\frac{(N-j)_ja^j}{(1+a)^{j}j!}.
$$
Proceeding as before for the Charlier polynomials, we can prove
that the polynomials $q_n^i$, $n,i\ge 0$, are, up to a linear change of variable, again Krawtchouk polynomials (but with different parameters):
\begin{equation}\label{qnik}
q_n^i(x)=\frac{a^i(N-i)_i}{i!n!(1+a)^i}\hat k_n^{a,-N+i+n+1}(-x+i).
\end{equation}

Consider now the following normalization of the Krawtchouk polynomials
\begin{equation}\label{krno}
k_n^{a,N}(x)=\frac{\hat k_n^{a,N}(x)}{n!}, \quad n\ge 0,
\end{equation}
and the Casorati Krawtchouk determinant
$$
K_{n,m,x}^{a,N}=\det\big(k_{m+j-1}^{a,N}(x+i-1)\big)_{i,j=1}^n.
$$
\begin{corollary}
For $n,m\ge 0$ and $a\not =0, -1$, we have
\begin{equation}\label{kr.n}
K_{n,m,x}^{a,N+n+m}=(-1)^{nm}K_{m,n,-x}^{a,-N}.
\end{equation}
Moreover, Conjecture (5.3) in \cite{du}  holds for $n,k,m\ge 0$ and $a\not =0,-1$.
\end{corollary}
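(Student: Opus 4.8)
The argument will follow the pattern of the proofs of the Charlier and Meixner corollaries, the extra work being the parameter juggling already seen in the proof of Corollary~\ref{meconjj}. First I would apply Theorem~\ref{mti} with $T=\Delta$ and the choices (\ref{defrn2}), taking $\mu$ to be the Krawtchouk measure $\rho_{a,\mathcal N}$ with $\mathcal N=N+n+m$; I would first assume $\mathcal N$ is a large positive integer and $a>0$, so that $\rho_{a,\mathcal N}$ is a genuine positive measure and the first $m+n$ polynomials $\hat k_\ell^{a,\mathcal N}$, being orthogonal with respect to it, are the $p_\ell$ of the theorem up to explicit scalars. Since $\big(\Delta^{i-1}p_{m+j-1}(x)\big)_{i,j}$ and $\big(p_{m+j-1}(x+i-1)\big)_{i,j}$ differ by a unitriangular row transformation and hence have the same determinant, the left-hand side of (\ref{mrby}) is, up to the normalization (\ref{krno}) and explicit leading-coefficient factors, a fixed nonzero multiple of $\Omega_{m-1}K_{n,m,x}^{a,N+n+m}$.

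For the right-hand side I would use (\ref{qnik}): with $\mathcal N=N+n+m$ the entry $q^{\,j-1}_{n+i-1}(x)$ is an explicit constant times $\hat k_{n+i-1}^{a,\,-N-m+i+j-1}(-x+j-1)$, so the second Krawtchouk parameter still drifts with both $i$ and $j$. To remove this drift I would derive from (\ref{krpol}) the two contiguity relations for Krawtchouk polynomials that are the analogues of the Meixner relations displayed in the proof of Corollary~\ref{meconjj} --- one expressing $\hat k_n^{a,N-1}$ through $\hat k_n^{a,N}$ and $\hat k_{n-1}^{a,N}$, and one expressing $\hat k_n^{a,N+1}(y)$ through $\hat k_n^{a,N}(y)$ and $\hat k_n^{a,N}$ at a neighbouring integer --- and apply the corresponding row operations (to absorb the drift in $i$, using the degree drop) and column operations (to absorb the drift in $j$, using the argument shift). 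These collapse $\det\big(q^{\,j-1}_{n+i-1}(x)\big)_{i,j=1}^m$ into an explicit constant times $\det\big(\hat k_{n+i-1}^{a,-N}(-x+j-1)\big)_{i,j=1}^m$, which, after transposing (leaving the determinant unchanged) and passing to the normalization (\ref{krno}), is a constant times $K_{m,n,-x}^{a,-N}$.

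It then remains to multiply out all the scalar factors --- those in $\Omega_{m-1}$ and $C_{n,m}$ of Theorem~\ref{mti}, the constant in (\ref{qnik}), and the constants created by the contiguity operations --- and to check that, exactly as for Charlier and Meixner, they combine precisely into $(-1)^{nm}$; this proves (\ref{kr.n}) for $\mathcal N=N+n+m$ a positive integer and $a>0$. Since both sides of (\ref{kr.n}) are polynomials in $N$, and rational in $a$ after clearing a power of $1+a$, the identity extends to all real $N$ and all $a\ne 0,-1$. Finally, Conjecture~(5.3) of \cite{du} follows by invoking the self-duality of the Krawtchouk polynomials, an identity of the form $\lambda_n^{a,N}\hat k_n^{a,N}(m)=\lambda_m^{a,N}\hat k_m^{a,N}(n)$ for $n,m\ge 0$ with explicit $\lambda$, which lets one interchange the degree and the argument and so recast that conjectured determinant identity as the special case of (\ref{kr.n}) with $x$ a nonnegative integer --- just as Conjecture~1.1 of \cite{du} was deduced from (\ref{c1.n}). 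I expect the main obstacle to be the middle step: pinning down the correct pair of contiguity relations and the exact sequence of row and column operations that eliminates the drifting parameter, and then verifying that the accumulated constants really telescope to the clean sign $(-1)^{nm}$.
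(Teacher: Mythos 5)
Your proposal is correct and takes essentially the same route as the paper, whose proof simply says to proceed as in the Meixner case (Corollary \ref{meconjj}) using exactly the ingredients you describe: Theorem \ref{mti} with $T=\Delta$, the identification (\ref{qnik}) of the $q_k^i$ as Krawtchouk polynomials, the two contiguity relations (degree drop and argument shift) to absorb the drifting parameter by row and column operations, and the degree--argument duality of Krawtchouk polynomials to recast Conjecture (5.3) of \cite{du} as the case of (\ref{kr.n}) with $x$ a nonnegative integer. Your only deviation --- first taking $N+n+m$ a large positive integer with $a>0$ so that the Krawtchouk measure is positive and only the first $m+n$ orthogonal polynomials are used, then extending by polynomiality in $N$ and rationality in $a$ --- is a harmless variant of the genericity/continuation step the paper performs (as in the Meixner proof) with the possibly signed measure and polynomial dependence on the parameter.
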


\begin{proof}
We can proceed as in the proof of Corollary \ref{meconjj} but using now
the identities
\begin{align*}
k_n^{a,N+1}(x)&=k_n^{a,N}(x)-\frac{a}{1+a}k_{n-1}^{a,N}(x),\\
k_n^{a,N-1}(x)&=\frac{a}{1+a}k_n^{a,N}(x+1)+\frac{1}{1+a}k_{n}^{a,N}(x),
\end{align*}
and the duality
$$
\frac{n!(a+1)^n}{a^n(2-N)_{n-1}}k_n^{a,N}(m)=\frac{m!(a+1)^m}{a^m(2-N)_{m-1}}k_m^{a,N}(n),\quad n,m\ge 0.
$$
\end{proof}

\subsubsection{Hahn polynomials}
For $\alpha+c-N\not =-1, -2, \cdots$ we write $(\hat h_{n}^{\alpha ,c,N})_n$ for the sequence of monic Hahn polynomials defined by
\begin{equation}\label{defhap}
\hat h_{n}^{\alpha ,c,N}(x)=\sum_{j=0}^n\frac{(1-N+j)_{n-j}(c+j)_{n-j}}{(n+\alpha +c-N+j)_{n-j}}\binom{n}{j}\binom{x}{j}j!
\end{equation}
(we have taken a slightly different normalization from the one used in \cite{NSU}, pp. 30-53, from where
the next formulas can be easily derived; for more details see \cite{KLS}, pp, 204-211).

Assume that $\alpha+c-N+1, \alpha-N+1, \alpha +c, c\not =0,-1, -2, \cdots $.
If, in addition, $N$ is not a positive integer, then the Hahn polynomials are always orthogonal with respect to
a (possibly signed) measure $\rho_{\alpha,c,N}$, which we normalize by taking
$$
\int d\rho_{\alpha,c,N}=\frac{\Gamma(\alpha+1-N)\Gamma(c)\Gamma(\alpha+c)}{\Gamma(\alpha+c+1-N)}.
$$
When $N$ is a positive integer, the first $N$ Hahn polynomials are orthogonal
with respect to the  Hahn measure
\begin{equation}\label{haw}
\rho_{\alpha,c,N}=\Gamma(N)\sum _{x=0}^{N-1} \frac{\Gamma(\alpha-x)\Gamma(x+c)}{\Gamma(N-x)x!}\delta _x
\end{equation}
(which it is positive when $\alpha>N-1$ and $c>0$).
The norm of the monic Hahn polynomials is given by
\begin{equation}\label{nmh}
\Vert \hat h_n^{\alpha , c,N}\Vert ^2=\frac{n!(N-n)_n\Gamma(\alpha-N+n+1)\Gamma(c+n)\Gamma(c+\alpha+n)}{(\alpha-N+c+2n)\Gamma(\alpha-N+c+n)(\alpha-N+c+n)_n^2}.
\end{equation}

A computation gives
$$
\mu^0_j=\frac{(N-j)_j\Gamma(\alpha-N+1)\Gamma(c+j)\Gamma(c+\alpha )}{\Gamma(c+\alpha-N+j+1)j!}.
$$
Proceeding as before for the Charlier polynomials, we can prove
that the polynomials $q_n^i$, $n,i\ge 0$, are, up to a linear change of variable, again Hahn polynomials (but with different parameters):
$$
q_n^i(x)=\frac{\mu_i^0}{n!}\hat h_n^{i-\alpha,-c-n-i+1,n+1+i-N}(-x+i).
$$
Consider now the following normalization of the Hahn polynomials
\begin{equation}\label{hano}
h_n^{\alpha ,c,N}(x)=\frac{\hat h_n^{\alpha ,c,N}(x)}{n!}, \quad n\ge 0,
\end{equation}
and the Casorati Hahn determinant
$$
H_{n,m,x}^{\alpha ,c,N}=\det\big(h_{m+j-1}^{\alpha ,c,N}(x+i-1)\big)_{i,j=1}^n.
$$
Proceeding in a similar way to the previous Sections, we can prove the following result.
\begin{corollary}
For $n,m\ge 0$ and  $\alpha+c-N\not =-1, -2, \cdots$, we have
\begin{equation}\label{ha.n}
H_{n,m,x}^{\alpha ,c-n-m,N+n+m}=(-1)^{nm}H_{m,n,-x}^{-\alpha ,2-c,-N}.
\end{equation}
\end{corollary}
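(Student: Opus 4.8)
The plan is to deduce (\ref{ha.n}) from the master identity (\ref{mrby}) of Theorem~\ref{mti}, exactly as was done in the Charlier, Meixner and Krawtchouk corollaries, specializing to the difference operator $T=\Delta$ with $r_n(x)=\binom{x}{n}$, $\psi_i(x)=\binom{x}{i}$, $s_n(x)=\binom{-x}{n}$ (see (\ref{defrn2}), (\ref{defsn2})) and to the Hahn measure $\mu=\rho_{\alpha,c-n-m,N+n+m}$, whose monic orthogonal polynomials are $\hat h_\ell^{\alpha,c-n-m,N+n+m}$ (for parameters in a Zariski-dense set where this measure exists) and whose associated polynomials $q_\ell^i$ are obtained by replacing $c$ by $c-n-m$ and $N$ by $N+n+m$ in the formula for $q_n^i$ recorded just above the statement. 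For the left-hand side of (\ref{mrby}) I would use that $\Delta^{i-1}f(x)=\sum_{k=0}^{i-1}(-1)^{i-1-k}\binom{i-1}{k}f(x+k)$, so that the rows $(\Delta^{i-1}p_{m+j-1}(x))_{i=1}^n$ arise from the rows $(p_{m+j-1}(x+i-1))_{i=1}^n$ by a lower-unitriangular change of basis; taking $p_\ell=\hat h_\ell^{\alpha,c-n-m,N+n+m}$ monic and recalling $\hat h_\ell=\ell!\,h_\ell$, the left-hand side of (\ref{mrby}) becomes $\prod_{j=0}^{n-1}(m+j)!\cdot H_{n,m,x}^{\alpha,c-n-m,N+n+m}$, and these factorials cancel the factor $\prod_{j=0}^{n-1}\xi_{m+j}/\sigma_{m+j}=\prod_{j=0}^{n-1}(m+j)!$ inside $C_{n,m}$ (here $\xi_\ell=1$ and $\sigma_\ell=1/\ell!$).

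For the right-hand side $\det(q^{j-1}_{n+i-1}(x))_{i,j=1}^m$ I would substitute the formula for $q_\ell^i$, pull $\mu_{j-1}^0$ out of column $j$ and $1/(n+i-1)!$ out of row $i$, and transpose, obtaining an explicit scalar times $\det\!\big(\hat h_{n+j-1}^{\,i-1-\alpha,\ m-i-j+3-c,\ i+j-1-m-N}(-x+i-1)\big)_{i,j=1}^m$. Here the three Hahn parameters differ from the target triple $(-\alpha,2-c,-N)$ by $i-1$, $m-i-j+1$ and $i+j-1-m$; note that the last two are opposite, and that all of them are coupled through $i$ and $i+j$. I would bring them to the constant triple by successive row and column operations built from contiguous relations for the monic Hahn polynomials --- each expressing $\hat h_\ell^{\alpha\pm1,c,N}$, $\hat h_\ell^{\alpha,c\pm1,N}$ or $\hat h_\ell^{\alpha,c,N\pm1}$ as a combination of Hahn polynomials of equal or lower degree, possibly with a unit argument shift --- in the spirit of the Meixner identities used in Corollary~\ref{meconjj} and the Krawtchouk ones used in its analogue. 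Reinstating $\hat h_\ell=\ell!\,h_\ell$ then produces $\prod_{j=0}^{m-1}(n+j)!$, which cancels the $1/(n+i-1)!$ factors, so the right-hand side of (\ref{mrby}) becomes a constant times $H_{m,n,-x}^{-\alpha,2-c,-N}$.

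It then remains to equate the two sides of (\ref{mrby}) and check that the accumulated constant is exactly $(-1)^{nm}$: using (\ref{oitnm}) to express $\Omega_{m-1}$ through the norms $\Vert\hat h_j^{\alpha,c-n-m,N+n+m}\Vert^2$ (see (\ref{nmh})), together with the explicit value $\mu_i^0=\frac{(N'-i)_i\Gamma(\alpha'-N'+1)\Gamma(c'+i)\Gamma(c'+\alpha')}{\Gamma(c'+\alpha'-N'+i+1)i!}$ with $(\alpha',c',N')=(\alpha,c-n-m,N+n+m)$, all transcendental factors cancel and only the sign $(-1)^{nm}$ survives. This proves (\ref{ha.n}) wherever all the objects involved are defined, and since both sides are analytic functions of $\alpha,c,N$ the identity extends to all $\alpha,c,N$ with $\alpha+c-N\ne-1,-2,\dots$. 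I expect the parameter-standardization step to be the main obstacle: in contrast to the Meixner and Krawtchouk cases, where a single parameter had to be normalized, here three Hahn parameters move at once and are coupled through $i+j$, so one must choose the right sequence of contiguous relations (keeping the unit argument shifts consistent across all rows and columns) and then verify that the scalar so produced collapses, together with $\Omega_{m-1}$, $C_{n,m}$ and $\prod_j\mu_{j-1}^0$, to leave precisely $(-1)^{nm}$.
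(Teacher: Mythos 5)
Your plan is essentially the paper's own argument: the corollary is proved there by specializing Theorem \ref{mti} to $T=\Delta$ and the Hahn measure with parameters $(\alpha,c-n-m,N+n+m)$, using the identity $q_n^i(x)=\frac{\mu_i^0}{n!}\,\hat h_n^{i-\alpha,-c-n-i+1,n+1+i-N}(-x+i)$, the normalization $\hat h_\ell=\ell!\,h_\ell$, and then parameter standardization via contiguous relations and analytic continuation, exactly as in the Charlier/Meixner/Krawtchouk corollaries (the paper only says ``proceeding in a similar way to the previous Sections''). The contiguous-relation bookkeeping you flag as the main obstacle is precisely the step the paper also leaves implicit, and your identification of the parameter deviations $i-1$, $m-i-j+1$, $i+j-1-m$ (the last two opposite) is correct, so your proposal matches the intended proof in structure and level of detail.
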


We stress that this symmetry for the Casorati determinants of Hahn polynomials is essentially different to the ones pose in the Conjectures 5.1 in \cite{du}.

\subsection{A composition of $\Delta$ and a polynomial of second degree}
In order to prove the Conjectures 5.1 in \cite{du}, we need to modify Theorems \ref{mth} and \ref{mti} for a composition of the first order difference operator $\Delta$ and a second degree polynomial.

For a complex number $u$, write
\begin{equation}\label{deflam}
\lambda(x)=x(x+u),
\end{equation}
and consider the polynomials $f_j$, $r_j$ and $s_j$, $j\ge 0$, defined by
\begin{align}
\label{deffj} f_j(x)&=\frac{(-1)^j}{j!}\prod_{i=0}^{j-1}(x-i(u+i)),\\
\label{defrj} r_j(x)&=\frac{\displaystyle (-x)_j(x+u)_j}{j!},\\
\label{defsj} s_j(x)&=(x)_j\binom{x+u+n+m-2}{j}.
\end{align}
Let us notice that $f_j$, $r_j$, $j\ge 0$, are polynomials of degree $j$ and $2j$, respectively, depending on $j$ and $u$, whereas $s_j$ is a polynomial of degree $2j$ depending on $j, u, m$ and $n$. When we need to stress those dependencies we will write $r_j^u$, $s_j^{u,m,n}$ or just $s_j^n$.

The following identities are simple exercises.

\begin{lemma}\label{lem5.4}
For $k,l\ge 0$, $0\le l\le k$,
\begin{align}
\label{5.4.1} r_k(x)&=f_k(\lambda(x)),\\
\label{5.4.2}
\Delta^lr^u_k(x)&=(-1)^l \sum_{j=0}^{[l/2]}\frac{(-1)^j}{j!}(l+1-2j)_{2j}(2x+u+l)_{l-2j}r^{u+l-j}_{k-l+j}(x).
\end{align}
For $0\le l\le n$ and $k\ge 0$
\begin{align}
\label{5.4.3}\nabla s_{k+1}^{n-l+1}(x)=(2x+u+m+n-l-2)s_{k}^{n-l}(x),
\end{align}
where $\nabla$ is the first order difference operator defined by $\nabla f=f(x)-f(x-1)$.
For $l\ge 0$, $k\in \ZZ $ and $m\ge \max\{0,-k+1\}$, we have
\begin{align}
\label{5.4.4}
\sum_{j=0}^{m+k}s^{n+k-l}_j(x)r_{m+k-j}^{u+n-1-l}(x)=0.
\end{align}

\end{lemma}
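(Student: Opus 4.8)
The plan is to verify the four identities in turn, each by reducing to a known generating-function or hypergeometric identity rather than by brute expansion. For \eqref{5.4.1}, I would simply observe that $(-x)_j(x+u)_j=\prod_{i=0}^{j-1}(-x+i)(x+u+i)=\prod_{i=0}^{j-1}(i(u+i)-\lambda(x))=(-1)^j\prod_{i=0}^{j-1}(\lambda(x)-i(u+i))$, which after dividing by $j!$ is exactly $f_k(\lambda(x))$; so \eqref{5.4.1} is a direct substitution $x\mapsto\lambda(x)$ in the definition \eqref{deffj} of $f_j$, with no real content beyond matching the products.

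For \eqref{5.4.2} the natural route is induction on $l$. The base case $l=0$ is trivial. For the inductive step I would need the single-$\Delta$ action on $r_k^u$, namely $\Delta r_k^u(x)=r_{k-1}^u(x)-$ (lower-order correction); concretely, writing $r_k^u$ as a hypergeometric-type product and using $\Delta[(-x)_j(x+u)_j]$ term by term, one gets a two-term relation expressing $\Delta r_k^u$ as a combination of $r_{k-1}^{u+1}$ and $r_k^{u+1}$ (the shift in $u$ reflecting that differencing a product $(-x)_j(x+u)_j$ lowers the ``$-x$'' Pochhammer and shifts the other). Feeding this into the inductive hypothesis and collecting terms, the coefficients $(-1)^j(l+1-2j)_{2j}(2x+u+l)_{l-2j}/j!$ should fall out after a Pascal-type recombination of binomial/Pochhammer coefficients; this bookkeeping is the one genuinely fiddly part, but it is the same ``induction on $x$ via $\binom{u}{v}=\binom{u-1}{v}+\binom{u-1}{v-1}$''-style argument already invoked for the combinatorial lemma in Section 2.

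For \eqref{5.4.3}, note $s_{k+1}^{n-l+1}(x)=(x)_{k+1}\binom{x+u+m+n-l-1}{k+1}$, so $\nabla s_{k+1}^{n-l+1}(x)=(x)_{k+1}\binom{x+u+m+n-l-1}{k+1}-(x-1)_{k+1}\binom{x+u+m+n-l-2}{k+1}$; factoring out $(x-1)_k\binom{x+u+m+n-l-2}{k}$ and simplifying the remaining linear factor gives $(2x+u+m+n-l-2)$ times $(x)_k\binom{x+u+m+n-l-2}{k}=s_k^{n-l}(x)$, so this is a one-line manipulation. Finally, \eqref{5.4.4} is the incarnation, in these variables, of the defining orthogonality relation \eqref{defsn} between $(r_j)$ and $(s_j)$: with $T$ taken to be the composition of $\Delta$ with multiplication-type operators so that $r_j^u$ plays the role of ``$r_j$'' and $s_j^{u,m,n}$ plays the role of ``$s_j$'', the product of generating functions $\Psi_r\Psi_s=1$ (equation just after \eqref{gfdi}) yields $\sum_j s_j r_{N-j}=0$ for $N\ge 1$; I would check that the particular index shifts $n+k-l$, $u+n-1-l$, $m+k-j$ are exactly those compatible with one such normalization, so that \eqref{5.4.4} is $\sum_{j=0}^{N}s_j^{\cdot}(x)r_{N-j}^{\cdot}(x)=0$ with $N=m+k\ge 1$. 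The main obstacle is \eqref{5.4.2}: keeping the $u$-shifts and the double-factorial-like coefficients $(l+1-2j)_{2j}$ straight through the induction requires care, whereas the other three are essentially substitutions or restatements of relations already established.
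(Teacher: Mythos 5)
Your treatments of \eqref{5.4.1}, \eqref{5.4.2} and \eqref{5.4.3} are essentially sound (the paper offers no proof of this lemma, calling it a simple exercise, so only correctness is at issue), but two details need correcting. In \eqref{5.4.3} the common factor of $(x)_{k+1}$ and $(x-1)_{k+1}$ is $(x)_k$, not $(x-1)_k$ (the Pochhammer symbols are rising); the one-line computation then does produce the factor $2x+u+m+n-l-2$. In \eqref{5.4.2} the correct one-step relation is the \emph{single-term} identity $\Delta r_k^u(x)=-(2x+u+1)\,r_{k-1}^{u+1}(x)$; a two-term relation in $r_{k-1}^{u+1}$ and $r_k^{u+1}$ with constant coefficients cannot exist for degree reasons (the left side has odd degree $2k-1$). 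The two-term proliferation in your induction on $l$ comes instead from the product rule $\Delta(fg)=f(x+1)\Delta g+(\Delta f)\,g$ applied to the $x$-dependent coefficients $(2x+u+l)_{l-2j}$; with that correction the induction does reproduce the stated coefficients.

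The genuine gap is your argument for \eqref{5.4.4}. It is \emph{not} an instance of the defining relation \eqref{defsn} or of $\Psi_r\Psi_s=1$, because the polynomials $s_j^{u,m,n}$ of \eqref{defsj} are not the convolution inverse of the sequence $(r_j^{u+n-1-l})_j$: the superscript of $s$ in \eqref{5.4.4} is tied to the truncation point. Writing $v=u+n-1-l$ and $N=m+k$, the sum is $\sum_{j=0}^N (x)_j\binom{x+v+N-1}{j}\,r^v_{N-j}(x)$, so the binomial parameter moves with $N$; if you keep the same $s$-sequence but truncate lower, the sum does not vanish (with top $x+v+1$ and truncation at $1$ one gets $-x(x+v)+x(x+v+1)=x\neq 0$). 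Hence no matching of normalizations can reduce \eqref{5.4.4} to the generating-function identity, and this part needs its own proof. A short one: using $(a)_{N-j}=(a)_N/(a+N-j)_j$ and $\binom{x+v+N-1}{j}=(-1)^j(1-x-v-N)_j/j!$, the sum equals $\frac{(-x)_N(x+v)_N}{N!}\,{}_2F_1(-N,x;x-N+1;1)$, and Chu--Vandermonde gives ${}_2F_1(-N,x;x-N+1;1)=(1-N)_N/(x-N+1)_N=0$ for $N\ge 1$ (as a polynomial identity in $x$); alternatively one can induct on $N$. With this replacement for the last part, the lemma is fully proved.
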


We then have the following modified version of Theorems \ref{mth} and \ref{mti}.

\begin{theorem}\label{mth2} For a complex number $u$, consider the polynomial $\lambda(x)=x(x+u)$ and the first order difference operator $\Delta$.
Let $\langle ,\rangle$ be a bilinear form acting in the linear space of polynomials.
For a given sequence of polynomials $(\psi_i)_i$, $\psi _i$ of degree $i$, we write $\mu_j^i=\langle f_j,\psi_i\rangle$, $0\le i,j$,
where $f_j$ is the polynomial defined by (\ref{deffj}).
Given two nonnegative integers $n,m$, with the operator $\Delta $, the polynomial $\lambda$ and the bilinear form $\langle ,\rangle$ we associate the
polynomials $p_k, q_k^i$, $k,i\ge0$. The polynomials $p_k$, $k\ge0$, are the pseudo orthogonal polynomials with respect to
$\langle ,\rangle$ defined by $p_0=1$ and for $k\ge 1$
\begin{equation}\label{fdpo3}
p_k(x)=\left| \begin{matrix} \mu _k^0 & \mu _k^1&\cdots & \mu _k^{k-1}&f_k(x)\\
\mu _{k-1}^0 &\mu_{k-1}^1& \cdots &\mu_{k-1}^{k-1}&f_{k-1}(x)\\ \vdots &\vdots & \ddots & \vdots& \vdots
\\\mu_0^0 & \mu_0^1&\cdots &\mu_0^{k-1}&1 \end{matrix}\right| .
\end{equation}
The polynomials $q_k^i$, $k,i\ge0$, are defined by
\begin{equation}\label{defqn2}
q_k^i(x)=\sum_{j=0}^k\mu _j^is_{k-j}^{u,m,n}(x),
\end{equation}
where $s_k^{u,m,n}$ are the polynomials defined by (\ref{defsj}).
We then have
\begin{equation}\label{mrb2}
\det\left(\Delta ^{i-1}(p_{m+j-1}(\lambda(x))\right)_{i,j=1}^n=C_{n,m}\det\left( q^{j-1}_{n+m-1}(x-i+1)\right)_{i,j=1}^m,
\end{equation}
where
$$
C_{n,m}=\frac{(-1)^{\binom{m}{2}}\prod_{j=1}^{n-1}(2x+u+j)_j}{\prod_{j=1}^{m-1}(2x+u+j+n-m)_j}\prod_{j=0}^{n-2}\Omega_{m+j}
$$
(and $\Omega_n$ is defined by (\ref{defome})).
Moreover, if $\mu$ is a measure, $\langle p,q\rangle =\int pqd\mu$ and $(p_n)_n$ is a sequence of orthogonal polynomials with respect to $\mu$ (see Section 2 for more details) then
\begin{equation}\label{mrby2}
\Omega_{m-1}\det \left( \Delta ^{i-1}(p_{m+j-1}(\lambda(x)))\right)_{i,j=1}^n=D_{n,m}\det \left( q^{j-1}_{n+m-1}(x-i+1)\right)_{i,j=1}^m ,
\end{equation}
where
$$
D_{n,m}=m!\xi_m\frac{(-1)^{\binom{m}{2}}\prod_{j=1}^{n-1}(m+j)!\xi_{m+j}(2x+u+j)_j}{\prod_{j=1}^{m-1}(2x+u+j+n-m)_j}
$$
and $\xi_n$ is the coefficient of $x^n$ in the power expansion of $p_n$.

\end{theorem}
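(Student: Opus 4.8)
Here is how I would try to prove Theorem \ref{mth2}.

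The plan is to run the proof of Theorem \ref{mth} almost verbatim, with the one relation $T(r_n)=r_{n-1}$ used there replaced by the four identities of Lemma \ref{lem5.4} (I assume $m,n\ge 1$, the degenerate cases being trivial, and I use the convention $r^u_i:=0$ for $i<0$). By (\ref{5.4.1}) we have $f_k(\lambda(x))=r^u_k(x)$, so (\ref{fdpo3}) says that $p_k(\lambda(x))$ is the $(k{+}1)\times(k{+}1)$ determinant whose last column is $(r^u_k(x),\dots,r^u_1(x),1)^t$, and $\Delta^{i-1}$ touches only that last column. I would therefore write down exactly the $(m{+}n)\times(m{+}3n{-}2)$ matrix of the proof of Theorem \ref{mth}, but with its last $n$ columns now carrying, in the row labelled $k$ ($k=m{+}n{-}1,\dots,0$), the entries $r^u_k(x),\Delta r^u_k(x),\dots,\Delta^{n-1}r^u_k(x)$, the moment columns being the $\mu^i_j=\langle f_j,\psi_i\rangle$. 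The left side of (\ref{mrb2}) is then the same one-line-tableau $n\times n$ determinant of maximal minors as in Theorem \ref{mth}, so Turnbull's identity (Theorem \ref{lvti}) applied with $R$ the last row, after deleting the tableaux with a repeated column, gives word for word
\[
\det\!\big(\Delta^{i-1}(p_{m+j-1}(\lambda(x)))\big)_{i,j=1}^n=\Omega_{m+n-2}\cdots\Omega_m\;\Theta(x),
\]
where $\Theta(x)$ is the $(m{+}n)\times(m{+}n)$ determinant having columns $\mu^0,\dots,\mu^{m-1}$ followed by the $n$ columns with $\Delta^c r^u_k(x)$ in the row labelled $k$.

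Next I would simplify $\Theta(x)$ by two elementary operations. For the columns: (\ref{5.4.2}) writes $\Delta^l r^u_k$ as a combination of the $r^{u+a}_{k-a}$ with $\lceil l/2\rceil\le a\le l$, the transition matrix being lower triangular in $(l,a)$ with diagonal $(-1)^l(2x+u+l)_l$ and independent of $k$, so column operations give $\Theta(x)=(-1)^{\binom n2}\big(\prod_{l=1}^{n-1}(2x+u+l)_l\big)\,\Theta_1(x)$, where those $n$ columns now carry $r^{u+a}_{k-a}(x)$ in the row labelled $k$. For the rows: to the row labelled $k$, for $k=m{+}n{-}1,\dots,n$, I add $\sum_{j<k}s^{u,m,k-m+1}_{k-j}(x)$ times the row labelled $j$; by (\ref{5.4.4}) (applied with its integer parameter $k{-}a{-}m$ and its "$l$"-parameter $n{-}1{-}a$, which makes the relevant superscript equal to $k{-}m{+}1$) every $r^{u+a}$-entry of these $m$ rows becomes $0$, while $\mu^i_k$ becomes $\tilde q^i_k(x):=\sum_j\mu^i_j(x)_{k-j}\binom{x+u+k-1}{k-j}$, and $\tilde q^i_{n+m-1}=q^i_{n+m-1}$ by (\ref{defsj}), (\ref{defqn2}). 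The untouched bottom $n$ rows carry the $r^{u+a}_{k-a}$ ($k=n{-}1,\dots,0$), an anti-triangular block with $1$'s on the antidiagonal of determinant $(-1)^{\binom n2}$; expanding along it leaves $\Theta_1(x)=(-1)^{\binom n2}\det\!\big(\tilde q^{j-1}_{m+n-i}(x)\big)_{i,j=1}^m$.

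The heart of the matter is the passage from this "antidiagonal" determinant to the "vertical" one in (\ref{mrb2}). Applying $\nabla$ term by term in the sum defining $\tilde q^i_K$ and feeding in (\ref{5.4.3}) (with $n{-}l=K{-}m$) gives the two-term relation $\tilde q^i_K(x)-\tilde q^i_K(x-1)=(2x+u+K-2)\,\tilde q^i_{K-1}(x)$; iterating it at the shifted arguments $x,x{-}1,\dots$ expresses $\tilde q^i_{m+n-1-t}(x)$ as a lower-triangular combination of $\tilde q^i_{n+m-1}(x),\dots,\tilde q^i_{n+m-1}(x{-}t)$ whose top coefficient is $(-1)^t/(2x+u+m+n-1-2t)_t$, so
\[
\det\!\big(\tilde q^{j-1}_{m+n-i}(x)\big)_{i,j=1}^m=\frac{(-1)^{\binom m2}}{\prod_{t=1}^{m-1}(2x+u+m+n-1-2t)_t}\,\det\!\big(q^{j-1}_{n+m-1}(x-i+1)\big)_{i,j=1}^m .
\]
Collecting the four accumulated factors $\Omega_{m+n-2}\cdots\Omega_m$, $(-1)^{\binom n2}\prod_{l}(2x+u+l)_l$, $(-1)^{\binom n2}$ and $(-1)^{\binom m2}/\prod_t(2x+u+m+n-1-2t)_t$, and using the elementary identity $\prod_{t=1}^{m-1}(2x+u+m+n-1-2t)_t=\prod_{j=1}^{m-1}(2x+u+j+n-m)_j$ (both sides being the product of the same staircase of linear factors), I would recover exactly the constant $C_{n,m}$, i.e. (\ref{mrb2}). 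Then (\ref{mrby2}) follows from (\ref{mrb2}) just as (\ref{mrby}) followed from (\ref{mrb}): the polynomial $p_k$ of (\ref{fdpo3}) has $x^k$-coefficient $\Omega_{k-1}/k!$, so trading it for a general orthogonal $p_k$ with leading coefficient $\xi_k$ multiplies the left determinant by $\prod_{k=m}^{m+n-1}\Omega_{k-1}/(k!\,\xi_k)$; multiplying by $\Omega_{m-1}$ and telescoping the $\Omega$'s against those inside $C_{n,m}$ leaves precisely $D_{n,m}$.

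I expect the main difficulty to lie in that last step: checking that the successive instances of (\ref{5.4.3}) at the various shifted arguments really do assemble into a triangular base change, pinning down the exact leading coefficients and the sign $(-1)^{\binom m2}$, and verifying the Pochhammer collapse to $\prod_{j=1}^{m-1}(2x+u+j+n-m)_j$; the rest should be a faithful transcription of the proof of Theorem \ref{mth}.
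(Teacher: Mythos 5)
Your proposal is correct and follows essentially the same route as the paper: the Turnbull tableau argument of Theorem \ref{mth} applied verbatim to the matrix with columns $\Delta^{c}r^u_k$, then the column operations from (\ref{5.4.2}), the row operations from (\ref{5.4.4}) with exactly the parameter choices you indicate, and finally (\ref{5.4.3}) to pass to the shifted arguments, with (\ref{mrby2}) obtained by the same renormalization via the leading coefficient $\Omega_{k-1}/k!$ of (\ref{fdpo3}). The only difference is that you make explicit the triangular row recombination and the Pochhammer product identity $\prod_{t=1}^{m-1}(2x+u+m+n-1-2t)_t=\prod_{j=1}^{m-1}(2x+u+j+n-m)_j$, which the paper compresses into ``by combining rows of the previous determinant''; these details check out.
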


\begin{proof}
Using (\ref{5.4.1}), we get that
$$
p_k(\lambda(x))=\left| \begin{matrix} \mu _k^0 & \mu _k^1&\cdots & \mu _k^{k-1}&r_k(x)\\
\mu _{k-1}^0 &\mu_{k-1}^1& \cdots &\mu_{k-1}^{k-1}&r_{k-1}(x)\\ \vdots &\vdots & \ddots & \vdots& \vdots
\\\mu_0^0 & \mu_0^1&\cdots &\mu_0^{k-1}&1 \end{matrix}\right| .
$$
Hence, proceeding as in the proof of Theorem \ref{mth}, we get that the determinant in the left hand side of (\ref{mrb2}) is equal to
$$
\Omega_{m+n-2}\Omega_{m+n-3}\cdots \Omega _m\Theta(x)
$$
where
$$
\Theta(x)=\left|\begin{matrix}
\mu^0_{m+n-1}&\mu^1_{m+n-1}&\cdots &\mu^{m-1}_{m+n-1}&r_{m+n-1}&\Delta (r_{m+n-1})&\cdots &\Delta^{n-1}(r_{m+n-1})\\
\mu^0_{m+n-2}&\mu^1_{m+n-2}&\cdots &\mu^{m-1}_{m+n-2}&r_{m+n-2}&\Delta (r_{m+n-2})&\cdots &\Delta^{n-1}(r_{m+n-2})\\
\vdots &\vdots&& \vdots &\vdots&\vdots& & \vdots \\
\mu^0_{1}&\mu^1_{1}&\cdots &\mu^{m-1}_{1}&r_{1}&\Delta(r_{1})&\cdots &0\\
\mu^0_{0}&\mu^1_{0}&\cdots &\mu^{m-1}_{0}&1&0&\cdots &0
\end{matrix}\right|,
$$
and $\Omega _n$ is defined by (\ref{defome}).

Using (\ref{5.4.2}), we get after combining the last $n$ columns of this determinant that
$$
\frac{(-1)^{\binom{n}{2}}\Theta (x)}{\prod_{j=1}^{n-1}(2x+u+j)_j}= \left|\begin{matrix}
\mu^0_{m+n-1}&\mu^1_{m+n-1}&\cdots &\mu^{m-1}_{m+n-1}&r_{m+n-1}^u&r_{m+n-2}^{u+1}&\cdots &r_{m}^{u+n-1}\\
\mu^0_{m+n-2}&\mu^1_{m+n-2}&\cdots &\mu^{m-1}_{m+n-2}&r_{m+n-2}^u&r_{m+n-3}^{u+1}&\cdots &r_{m-1}^{u+n-1}\\
\vdots &\vdots&& \vdots &\vdots&\vdots& & \vdots \\
\mu^0_{1}&\mu^1_{1}&\cdots &\mu^{m-1}_{1}&r_{1}^u&1&\cdots &0\\
\mu^0_{0}&\mu^1_{0}&\cdots &\mu^{m-1}_{0}&1&0&\cdots &0
\end{matrix}\right|.
$$
We now consider the polynomials defined by
$$
q_k^{i,l}(x)=\sum_{j=0}^k\mu_j^is_{k-j}^{u,m,n-l}(x).
$$
Notice that $q_k^{i,0}=q_k^i$, where $q_k^i$ are the polynomials defined by (\ref{defqn2}).

We then add to the first row of the previous determinant
the linear combination of the next $m+n-1$ rows
multiplying the $j$-th row by $s_{j-1}^{u,m,n}(x)$, $j=2,\cdots, m+n$. Using (\ref{5.4.4}) for $k=l$ and $l=0,\cdots ,n-1$ we get
$$
\frac{(-1)^{\binom{n}{2}}\Theta (x)}{\prod_{j=1}^{n-1}(2x+u+j)_j}=\left|\begin{matrix}
q^{0,0}_{m+n-1}&q^{1,0}_{m+n-1}&\cdots &q^{m-1,0}_{m+n-1}&0&0&\cdots &0\\
\mu^0_{m+n-2}&\mu^1_{m+n-2}&\cdots &\mu^{m-1}_{m+n-2}&r_{m+n-2}^u&r_{m+n-3}^{u+1}&\cdots &r_{m-1}^{u+n-1}\\
\vdots &\vdots&& \vdots &\vdots&\vdots& & \vdots \\
\mu^0_{1}&\mu^1_{1}&\cdots &\mu^{m-1}_{1}&r_{1}&1&\cdots &0\\
\mu^0_{0}&\mu^1_{0}&\cdots &\mu^{m-1}_{0}&1&0&\cdots &0
\end{matrix}\right|.
$$
In a similar way, we add to the second row of this determinant the linear combination of the next $m+n-2$ rows
multiplying the $j$-th row by $s_{j-2}^{u,m,n-1}(x)$, $j=3,\cdots, m+n$. Using (\ref{5.4.4}) for $k=l-1$ and $l=0,\cdots ,n-1$ we get
$$
\frac{(-1)^{\binom{n}{2}}\Theta (x)}{\prod_{j=1}^{n-1}(2x+u+j)_j}=\left|\begin{matrix}
q^{0,0}_{m+n-1}&q^{1,0}_{m+n-1}&\cdots &q^{m-1,0}_{m+n-1}&0&0&\cdots &0\\
q^{0,1}_{m+n-2}&q^{1,1}_{m+n-2}&\cdots &q^{m-1,2}_{m+n-2}&0&0&\cdots &0\\
\mu^0_{m+n-3}&\mu^1_{m+n-3}&\cdots &\mu^{m-1}_{m+n-3}&r_{m+n-3}^u&r_{m+n-4}^{u+1}&\cdots &r_{m-2}^{u+n-1}\\
\vdots &\vdots&& \vdots &\vdots&\vdots& & \vdots \\
\mu^0_{1}&\mu^1_{1}&\cdots &\mu^{m-1}_{1}&r_{1}^u&1&\cdots &0\\
\mu^0_{0}&\mu^1_{0}&\cdots &\mu^{m-1}_{0}&1&0&\cdots &0
\end{matrix}\right| .
$$
Proceeding in the same way $m$ times we have
$$
\frac{(-1)^{\binom{n}{2}}\Theta (x)}{\prod_{j=1}^{n-1}(2x+u+j)_j}=\left|\begin{matrix}
q^{0,0}_{m+n-1}&q^{1,0}_{m+n-1}&\cdots &q^{m-1,0}_{m+n-1}&0&0&\cdots &0\\
\vdots &\vdots&& \vdots &\vdots&\vdots& & \vdots \\
q^{0,m-1}_{n}&q^{1,m-1}_{n}&\cdots &q^{m-1,m-1}_{n}&0&0&\cdots &0\\
\mu^0_{n-1}&\mu^1_{n-1}&\cdots &\mu^{m-1}_{n-1}&r_{n-1}&r_{n-2}&\cdots &1\\
\vdots &\vdots&& \vdots &\vdots&\vdots& & \vdots \\
\mu^0_{0}&\mu^1_{0}&\cdots &\mu^{m-1}_{0}&1&0&\cdots &0
\end{matrix}\right| .
$$
And hence
$$
\Theta(x)=\prod_{j=1}^{n-1}(2x+u+j)_j\left|\begin{matrix}
q^{0,0}_{m+n-1}&q^{1,0}_{m+n-1}&\cdots &q^{m-1,0}_{m+n-1}\\
\vdots &\vdots&& \vdots  \\
q^{0,m-1}_{n}&q^{1,m-1}_{n}&\cdots &q^{m-1,m-1}_{n}
\end{matrix}\right| .
$$
Using  (\ref{5.4.3}), we easily have that
$$
q_{k+1}^{j,l-1}(x)-q_{k+1}^{j,l-1}(x-1)=(2x+u+m+n-l-2)q_k^{j,l}.
$$
Taking into account that $q_k^{i,0}=q_k^i$, where $q_k^i$ are the polynomials defined by (\ref{defqn2}), by combining rows of the previous determinant we get
$$
\Theta(x)=E_{n,m}\left|\begin{matrix}
q^{0}_{m+n-1}(x)&q^{1}_{m+n-1}(x)&\cdots &q^{m-1}_{m+n-1}(x)\\
\vdots &\vdots&& \vdots  \\
q^{0}_{m+n-1}(x-m+1)&q^{1}_{m+n-1}(x-m+1)&\cdots &q^{m-1}_{m+n-1}(x-m+1)
\end{matrix}\right| ,
$$
where
$$
E_{n,m}=\frac{(-1)^{\binom{m}{2}}\prod_{j=1}^{n-1}(2x+u+j)_j}{\prod_{j=1}^{m-1}(2x+u+j+n-m)_j}.
$$
This finishes the proof of the identity (\ref{mrb2}).

The identity (\ref{mrby2}) is an easy consequence of (\ref{mrb2}) just taking into account the expression (\ref{fdpop}) for the monic orthogonal polynomials $\hat p_n$ with respect to a measure and that $p_n=\xi _n \hat p_n$.
\end{proof}

Given a measure $\mu$, if we take $\psi_j(x)=f_j(x)$ (see (\ref{deffj})), an expression of the polynomials $q_{n+m-1}^i$ in terms of the generalized moments $\mu_n^0=\int f_n(x)d\mu (x)$ of $\mu $ can be given.

Indeed, consider the polynomials $f_{k,m}(x)$, $k,m\ge 0$, defined by
$$
f_{k,m}(x)=\frac{(-1)^m}{m!}\prod _{i=k}^{k+m-1}(x-i(u+i)).
$$
Notice that $f_j=f_{0,j}$, $j\ge 0$.

The following identities are simple exercises.

\begin{lemma} For $i,j,l\ge 0$, $0\le l\le i$,
\begin{align}\label{fj1}
f_{j,i-l}(x)f_j(x)&=\binom{j+i-l}{j}f_{j+i-l}(x),\\ \label{fj2}
f_i(x)&=\sum_{l=0}^{\min (i,j)}(-1)^l\binom{j}{l}(u+i+j-l)_lf_{j,i-l}(x).
\end{align}
\end{lemma}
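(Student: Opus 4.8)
The plan is to prove (\ref{fj1}) directly from the product definitions and (\ref{fj2}) by induction on $j$. I would first write $\pi_t(x)=x-t(u+t)$, so that $f_k(x)=\frac{(-1)^k}{k!}\prod_{t=0}^{k-1}\pi_t(x)$ and $f_{k,m}(x)=\frac{(-1)^m}{m!}\prod_{t=k}^{k+m-1}\pi_t(x)$. For (\ref{fj1}), since $i-l\ge0$ the index sets $\{0,\dots,j-1\}$ and $\{j,\dots,j+i-l-1\}$ are disjoint with union $\{0,\dots,j+i-l-1\}$, hence
\[
f_{j,i-l}(x)\,f_j(x)=\frac{(-1)^{j+i-l}}{j!\,(i-l)!}\prod_{t=0}^{j+i-l-1}\pi_t(x)=\binom{j+i-l}{j}f_{j+i-l}(x).
\]

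For (\ref{fj2}) the main tool would be the three-term relation
\[
f_{j,m}(x)=f_{j+1,m}(x)-(u+2j+m)\,f_{j+1,m-1}(x),\qquad j,m\ge0,
\]
with the convention $f_{j+1,-1}:=0$. I would prove it for $m\ge1$ by factoring $\pi_j(x)$ out of $f_{j,m}(x)$, which gives $f_{j,m}(x)=-\tfrac1m\,\pi_j(x)\,f_{j+1,m-1}(x)$; by the dual identity $\pi_{j+m}(x)\,f_{j+1,m-1}(x)=-m\,f_{j+1,m}(x)$ (obtained by appending the factor $\pi_{j+m}$); and by the elementary fact $\pi_s(x)-\pi_t(x)=(t-s)(u+s+t)$, used in the form $\pi_j(x)=\pi_{j+m}(x)+m(u+2j+m)$. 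The case $m=0$ is trivial.

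Granting the three-term relation, I would prove (\ref{fj2}) by induction on $j$. The base case $j=0$ is immediate since the right side collapses to $(u+i)_0\,f_{0,i}(x)=f_i(x)$. For the step $j\to j+1$ I would substitute the relation into each summand $f_{j,i-l}$ on the right of (\ref{fj2}), reindex the shifted part of the sum by $l\mapsto l-1$, and collect the coefficient of $f_{j+1,i-l}(x)$. Writing $b:=u+i$, matching this with the formula (\ref{fj2}) at level $j+1$ reduces to the identity
\[
\binom{j}{l}(b+j-l)_l+\binom{j}{l-1}(b+j-l+1)_{l-1}(b+2j-l+1)=\binom{j+1}{l}(b+j+1-l)_l;
\]
for $l\ge1$, factoring out the block $(b+j-l+1)_{l-1}=(b+j+1-l)_{l-1}$ common to both sides and using $\binom{j+1}{l}=\binom{j}{l}+\binom{j}{l-1}$, this becomes $-l\binom{j}{l}+(j-l+1)\binom{j}{l-1}=0$, which holds because $l\binom{j}{l}$ and $(j-l+1)\binom{j}{l-1}$ both equal $j!/((l-1)!\,(j-l)!)$; the case $l=0$ is trivial.

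The step I expect to need most care is the bookkeeping of the summation bounds in the induction: in passing from $\sum_{l=0}^{\min(i,j)}$ to $\sum_{l=0}^{\min(i,j+1)}$, the reindexing produces an extra index $l=\min(i,j)+1$, and one must check it behaves correctly — either $\binom{j}{l}=0$ and the surviving contribution already fits the new pattern, or $f_{j+1,i-l}=f_{j+1,-1}=0$ and the contribution vanishes. As a cross-check, and an alternative route, one can substitute $x\mapsto\lambda(y)$ and use $f_k(\lambda(y))=(-y)_k(y+u)_k/k!$ together with $f_{k,m}(\lambda(y))=(k-y)_m(y+u+k)_m/m!$ (compare (\ref{5.4.1})); since $\lambda$ is non-constant, any identity of polynomials verified after this substitution is valid before it, and (\ref{fj1})--(\ref{fj2}) then turn into Pochhammer identities that follow from Chu--Vandermonde summation.
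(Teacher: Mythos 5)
Your proof is correct. Note that the paper offers no argument for this lemma at all --- it is introduced with ``The following identities are simple exercises'' --- so there is no proof of record to compare against; what you supply is a complete verification. Identity (\ref{fj1}) is indeed immediate from the disjointness of the index blocks $\{0,\dots,j-1\}$ and $\{j,\dots,j+i-l-1\}$, and your route to (\ref{fj2}) --- the contiguous relation $f_{j,m}=f_{j+1,m}-(u+2j+m)f_{j+1,m-1}$ followed by induction on $j$ --- is sound: the relation follows from $\pi_j(x)=\pi_{j+m}(x)+m(u+2j+m)$ exactly as you say; the coefficient identity, after cancelling the common block $(b+j-l+1)_{l-1}=(b+j+1-l)_{l-1}$ and applying Pascal's rule, does reduce to $l\binom{j}{l}=(j-l+1)\binom{j}{l-1}$; and the boundary index behaves as you describe (at $l=\min(i,j)+1$ either $\binom{j}{l}=0$ while the reindexed term contributes $(-1)^{j+1}(u+i)_j(u+i+j)f_{j+1,i-j-1}=(-1)^{j+1}(u+i)_{j+1}f_{j+1,i-j-1}$, matching the level-$(j+1)$ formula, or else $f_{j+1,-1}=0$ kills it). One small correction to your cross-check: after the substitution $x\mapsto\lambda(y)$, identity (\ref{fj2}) becomes a terminating \emph{balanced} ${}_3F_2$ evaluated at $1$ (numerator parameters $-i,\,-j,\,1-u-i-j$ against denominators $1+y-i-j,\,1-y-u-i-j$), so the classical summation needed is Pfaff--Saalsch\"utz rather than Chu--Vandermonde, which only covers the ${}_2F_1$ case; since this substitution argument is offered only as an alternative route, the main induction argument is unaffected.
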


We then have
\begin{align*}
\mu_j^i&=\int \psi_i f_jd\mu=\int f_if_jd\mu \\
&=\sum_{l=0}^{\min (i,j)}(-1)^l\binom{j}{l}(u+i+j-l)_l\int f_if_{j,i-l}d\mu \\
&=\sum_{l=0}^{\min (i,j)}(-1)^l\binom{j}{l}\binom{j+i-l}{j}(u+i+j-l)_l\int f_{i+j-l}d\mu \\
&=\sum_{l=0}^{\min (i,j)}(-1)^l\binom{j}{l}\binom{j+i-l}{j}(u+i+j-l)_l\mu_{i+j-l}^0.
\end{align*}
This gives
\begin{align*}
q_k^i(x)&=\sum_{j=0}^k\mu _j^is_{k-j}(x)\\
&=\sum_{j=0}^k\sum_{l=0}^{\min (i,j)}(-1)^l\binom{j}{l}\binom{j+i-l}{j}(u+i+j-l)_l\mu_{i+j-l}^0(x)_{k-j}\binom{x+u+n+m-2}{k-j}\\
&=\sum_{g=0}^{k}\mu_{g+i}^0\sum_{j=0}^k(-1)^{j-g}\binom{j}{j-g}\binom{g+i}{j}(u+g+i)_{j-g}(x)_{k-j}\binom{x+u+n+m-2}{k-j}.
\end{align*}

Using (\ref{abc3}), we finally find
\begin{align*}
q_{n+m-1}^i(x)=\sum_{g=0}^{n+m-1}\mu_{g+i}^0\binom{g+i}{i}(x-i)_{n+m-1-g}\frac{(x+u+g+i)_{n+m-1-g}}{(n+m-1-g)!}.
\end{align*}

\bigskip

\subsubsection{Dual Hahn polynomials}
For $\alpha, c, N\in \RR $ we write $(\hat h_{n}^{*,\alpha,c,N})_n$ for the sequence of monic dual Hahn polynomials (see \cite{KLS}, pp, 208-213)
defined by
\begin{equation}\label{defdualhap}
\hat h_{n}^{*,\alpha,c,N}(x)=\sum_{j=0}^n(-n)_j(1-N+j)_{n-j}(c+j)_{n-j}f_{j}(x),
\end{equation}
where
$$
f_j(x)=\frac{(-1)^j}{j!}\prod_{i=0}^{j-1}(x-i(\alpha+c-N+i)),
$$
(that is, the polynomials defined by (\ref{deffj}) for $u=\alpha+c-N$).

With the family $(h_{n}^{*,\alpha ,c,N})_n$ we associate the second degree polynomial $\lambda(x)=\lambda^{\alpha ,c,N}(x)=x(x+\alpha +c-N)$.

Assume that $\alpha,  -c\not =0, 1, 2, \cdots $.
If, in addition, $N$ is not a positive integer, then the dual Hahn polynomials are always orthogonal with respect to
a (possibly signed) measure $\rho_{\alpha,c,N}$, which we normalize by taking
$$
\int d\rho_{\alpha,c,N}=\frac{\Gamma(\alpha+1-N)}{\Gamma(\alpha)}.
$$
When $N$ is a positive integer, the first $N$ dual Hahn polynomials are orthogonal
with respect to the dual Hahn measure
\begin{equation}\label{dhaw}
\rho_{\alpha,c,N}=\sum _{x=0}^{N-1} \frac{(2x+\alpha+c-N)(c)_x(N-x)_x}{(x+\alpha+c-N)_N(\alpha-N+1)_xx!}\delta _{\lambda (x)}
\end{equation}
(which it is positive when $\alpha>N-1$ and $c>0$).
The norm of the monic dual Hahn polynomials is given by
\begin{equation}\label{nmdh}
\Vert \hat h_n^{*,\alpha , c,N}\Vert ^2=\frac{n!(c)_n\Gamma(\alpha -N+1)\Gamma(N)}{\Gamma(a-n)\Gamma(N-n)}.
\end{equation}

A computation gives
$$
\mu^0_j=(-1)^j\frac{\Gamma(\alpha-N+1)\Gamma(N)(c)_j}{j!\Gamma(\alpha)\Gamma(N-j)}.
$$
Proceeding as before for the Charlier polynomials, we can prove
that the polynomials $q_{n+m-1}^i$, $0\le m-1$, are, up to a quadratic change of variable, again dual Hahn polynomials:
\begin{equation}\label{dhqni}
q_{n+m-1}^i(x)=\frac{\mu_i^0}{(n+m-1)!}
\hat h_{n+m-1}^{*,-\alpha+n+m,2-c-n-m-i,-N+n+m+i}(\lambda(-x+i)).
\end{equation}
Consider now the following normalization of the dual Hahn polynomials
\begin{equation}\label{dhno}
h_n^{*,\alpha,c,N}(x)=\frac{\hat h_n^{*,\alpha,c,N}(x)}{n!}, \quad n\ge 0,
\end{equation}
and the (normalized) Casorati dual Hahn determinant
$$
H_{n,m,x}^{*,\alpha,c,N}=\frac{\det\big( h_{m+j-1}^{*,\alpha,c,N}(\lambda^{\alpha,c,N}(x+i-1))\big)_{i,j=1}^n}
{\prod_{l=1}^{n-1}(2x+\alpha+c-N+l)_l}.
$$
We are now ready to prove the Conjecture (5.8) in \cite{du}.

\begin{corollary}
For $n,m\ge 0$, we have
\begin{equation}\label{dha.n}
H_{n,m,x}^{*,\alpha+n+m,c-n-m,N+n+m}=H_{m,n,-x}^{*,-\alpha,2-c,-N}.
\end{equation}
Moreover, Conjecture (5.8) in \cite{du}  holds for $n,k,m\ge 0$ and $\alpha+c-N\not =0,-1,-2\cdots $.
\end{corollary}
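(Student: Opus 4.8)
The plan is to run Theorem~\ref{mth2}, for the first order difference operator $\Delta$ and the quadratic polynomial, on the dual Hahn family, just as Corollary~\ref{meconjj} and the corollary for Krawtchouk polynomials were deduced from Theorem~\ref{mth}. The parameters to feed in are the shifted ones: take $(A,B,C)=(\alpha+n+m,\,c-n-m,\,N+n+m)$, so that $u=A+B-C=\alpha+c-N-n-m$; let $\langle p,q\rangle=\int pq\,d\rho_{A,B,C}$ with $\rho_{A,B,C}$ the dual Hahn measure; and take $\psi_j=f_j$, the polynomials of (\ref{deffj}). Then $\mu_j^i=\int f_if_j\,d\rho_{A,B,C}$, the pseudo orthogonal polynomials $p_k$ of (\ref{fdpo3}) are $p_k=\xi_k\hat h_k^{*,A,B,C}$, and by (\ref{dhqni}) the polynomials $q_{n+m-1}^{\,i}$ are, up to the scalar $\mu_i^0/(n+m-1)!$, the dual Hahn polynomials $\hat h_{n+m-1}^{*,-\alpha,\,2-c-i,\,-N+i}$ composed with the appropriate quadratic change of variable; crucially, the superscript $i$ now appears only in the $c$- and $N$-parameters, and $i=0$ gives exactly the target parameters $(-\alpha,2-c,-N)$. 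One proves the identity first under the genericity conditions on $(A,B,C)$ guaranteeing the existence of the orthogonal polynomials, and then extends it to all $\alpha,c,N$ with $\alpha+c-N\neq 0,-1,-2,\dots$ by analyticity, both sides of (\ref{dha.n}) being analytic in the parameters.

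For the left-hand side of (\ref{mrby2}): since $\Delta^{i-1}$ in $\Delta^{i-1}(p_{m+j-1}(\lambda(x)))$ acts on $x$, the $i$-th such column is a unit-triangular combination of the shifted columns $p_{m+j-1}(\lambda(x+l))$, $0\le l\le i-1$, so the determinant is unchanged if $\Delta^{i-1}$ is replaced by the shift $x\mapsto x+i-1$. Writing $p_k=\xi_k\hat h_k^{*,A,B,C}$ and $\hat h_k^{*}=k!\,h_k^{*}$ (see (\ref{dhno})), the left-hand side of (\ref{mrby2}) becomes $\Omega_{m-1}\big(\prod_{j=1}^n(m+j-1)!\,\xi_{m+j-1}\big)$ times $\det\big(h_{m+j-1}^{*,A,B,C}(\lambda^{A,B,C}(x+i-1))\big)_{i,j=1}^n$; this determinant equals $H_{n,m,x}^{*,A,B,C}$ times $\prod_{l=1}^{n-1}(2x+A+B-C+l)_l=\prod_{l=1}^{n-1}(2x+u+l)_l$, which is exactly the numerator $\prod_{j=1}^{n-1}(2x+u+j)_j$ of $D_{n,m}$, so it cancels, while $\Omega_{m-1}\prod_j(m+j-1)!\,\xi_{m+j-1}$ absorbs the factor $m!\,\xi_m\prod_{j=1}^{n-1}(m+j)!\,\xi_{m+j}$ of $D_{n,m}$ once $\Omega_{m-1}$ is computed via (\ref{oitnm}) and (\ref{nmdh}).

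For the right-hand side: substitute (\ref{dhqni}) into $\det\big(q^{j-1}_{n+m-1}(x-i+1)\big)_{i,j=1}^m$, pull out the column scalars $\mu_{j-1}^0/(n+m-1)!$, and apply contiguous relations for the normalized dual Hahn polynomials $h_k^{*}$ in the parameters $c,N$ and in the degree $k$ — the dual Hahn analogues of the Meixner and Krawtchouk identities used in the proofs of Corollary~\ref{meconjj} and of the Krawtchouk corollary — to perform column and row operations turning the $(i,j)$-entry, an $h_{n+m-1}^{*,-\alpha,2-c-(j-1),-N+(j-1)}$ evaluated at a point of the form $\lambda(-x+i+j-2)$, into $h_{n+j-1}^{*,-\alpha,2-c,-N}$ evaluated at $\lambda^{-\alpha,2-c,-N}(-x+i-1)$; the point that makes this clean is that all the required adjustments — the parameter shifts $c\mapsto c-(j-1)$, $N\mapsto N+(j-1)$, the degree drop $n+m-1\mapsto n+j-1$, and the extra $j$-dependent shift inside $\lambda$ — are governed by the column index $j$ alone. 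Up to transposition (which does not change the determinant) this yields $H_{m,n,-x}^{*,-\alpha,2-c,-N}$ times $\prod_{l=1}^{m-1}(2(-x)-\alpha+2-c+N+l)_l$, and after the reflection formula $(a)_j=(-1)^j(1-a-j)_j$ and the substitution $x\mapsto -x$ this last product is precisely the denominator $\prod_{j=1}^{m-1}(2x+u+j+n-m)_j$ of $D_{n,m}$. What then remains is to verify that the leftover constants — $\prod_j\mu_{j-1}^0/(n+m-1)!$, the powers of $2$, factorials and Pochhammer symbols generated by the contiguous relations, the explicit $\mu_j^0$ recorded just before (\ref{dhqni}), and $\Omega_{m-1}$ through (\ref{oitnm}) and (\ref{nmdh}) — cancel, so that the two sides of (\ref{dha.n}) come out equal with no residual constant (in particular no $(-1)^{nm}$, the $(-1)^{\binom{m}{2}}$ of $D_{n,m}$ being killed by the one in (\ref{oitnm})). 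For the ``Moreover'' part one uses the self-duality of the dual Hahn polynomials (equivalently their duality with the Hahn polynomials) to rewrite Conjecture~(5.8) of \cite{du} as the specialization of (\ref{dha.n}) to positive integer values of $x$, exactly as was done for Charlier, Meixner and Krawtchouk.

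The step I expect to be the main obstacle is the third one: making sure the column and row operations built from the dual Hahn contiguous relations really collapse $\det\big(q^{j-1}_{n+m-1}(x-i+1)\big)$ onto the normalized Casorati dual Hahn determinant with exactly the parameters $(-\alpha,2-c,-N)$, with the quadratic argument of (\ref{dhqni}) lined up with $\lambda^{-\alpha,2-c,-N}(-x+i-1)$, and then that the whole tower of multiplicative constants (from $D_{n,m}$, $\Omega_{m-1}$, the norms (\ref{nmdh}), the $\mu_j^0$, and the contiguous relations) telescopes away entirely. The bookkeeping is heavy because the parameter shifts, the degree drop and the shift inside $\lambda$ all have to be realized simultaneously by elementary operations and the accumulated Pochhammer symbols must conspire exactly.
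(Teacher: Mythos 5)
Your proposal follows essentially the same route as the paper's proof: apply the modified identity (\ref{mrby2}) to the dual Hahn measure (with the shifted parameters), identify the polynomials $q_{n+m-1}^{j-1}$ via (\ref{dhqni}) and the normalization (\ref{dhno}), collapse the right-hand determinant onto $H_{m,n,-x}^{*,-\alpha,2-c,-N}$ by column/row operations coming from a dual Hahn contiguous relation, extend to all admissible parameters by rationality, and use the dual Hahn--Hahn duality to deduce Conjecture (5.8) of \cite{du}. The single relation you need (and flag as the main obstacle) is exactly the one the paper records, namely $h_k^{*,\alpha,c,N}(\lambda^{\alpha,c,N}(x))=h_k^{*,\alpha,c-1,N+1}(\lambda^{\alpha,c-1,N+1}(x+1))-(k-\alpha)\,h_{k-1}^{*,\alpha,c,N}(\lambda^{\alpha,c,N}(x))$, which realizes simultaneously the $(c,N)$-shift, the degree drop and the argument shift that your plan requires.
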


\begin{proof}
Assume first $\alpha, -c\not =0,1,2,\cdots $, so that the polynomials $(h_n^{*,\alpha ,c,N})_n$ are orthogonal with respect to a measure.
The identity (\ref{dha.n}) then follows from the identity (\ref{mrby2}), (\ref{dhqni}), the definition (\ref{dhno}) of the polynomials $(h_n^{*,\alpha ,c,N})_n$ and the identity
\begin{align*}
h_k^{*,\alpha,c,N}(\lambda ^{\alpha , c, N}(x))&=h_k^{*,\alpha,c-1,N+1}(\lambda ^{\alpha , c-1, N+1}(x+1))\\
&\hspace{1cm}-(k-\alpha )h_{k-1}^{*,\alpha,c,N}(\lambda ^{\alpha , c, N}(x)).
\end{align*}
Since both sides of the identity (\ref{dha.n}) are rational functions in $c$ and $\alpha $, this identity also holds for $\alpha, -c =0,1,2,\cdots $.

Using the duality
$$
\frac{1}{(c)_k(1-N)_k}\hat h_k^{*,\alpha,c,N}(n(n+\alpha+c-N))=\frac{(n+\alpha +c-N)_n}{(c)_n(1-N)_n}\hat h_n^{\alpha,c,N}(k).
$$
we can  rewrite Conjecture (5.8) in \cite{du} as the particular case of identity (\ref{dha.n}) when $x$ is a positive integer. This proves Conjecture (5.8) in \cite{du}.
\end{proof}

\subsubsection{Racah and Wilson polynomials}\label{5.4.2rw}
For the sake of completeness, we include here without details
some symmetries for determinants of Racah and Wilson polynomials.

For $\alpha, \beta, \gamma, \delta\in \RR $, $\alpha+\beta\not=-2,-3,\cdots$, we write $(r_{n}^{\alpha,\beta,\gamma,\delta})_n$ for the sequence of Racah polynomials
defined by
$$
r_{n}^{\alpha,\beta,\gamma,\delta}(x)=\sum_{j=0}^n\frac{
(\alpha+1+j)_{n-j}(\beta+\delta+1+j)_{n-j}(\gamma+1+j)_{n-j}}{(-1)^j(n-j)!(n+\alpha+\beta+1+j)_{n-j}}f_{j}(x),
$$
(we have taken a slightly different normalization from the one used in \cite{KLS}, pp, 190-196) where
$$
f_j(x)=\frac{(-1)^j}{j!}\prod_{i=0}^{j-1}(x-i(\gamma+\delta+1+i)),
$$
(that is, the polynomials defined by (\ref{deffj}) for $u=\gamma+\delta+1$).

Consider now the following (normalized) Casorati Racah determinant
$$
R_{n,m,x}^{\alpha,\beta,\gamma,\delta}=\frac{\det\big( r_{m+j-1}^{\alpha,\beta,\gamma,\delta}(\lambda^{\gamma,\delta}(x+i-1))\big)_{i,j=1}^n}
{\prod_{l=1}^{n-1}(2x+\gamma+\delta+1+l)_l},
$$
where $\lambda^{\gamma,\delta}(x)=x(x+\gamma+\delta+1)$.

\begin{corollary}
For $n,m\ge 0$, we have
\begin{equation}\label{rac.n}
R_{n,m,x}^{\alpha-n-m,\beta-n-m,\gamma-n-m,\delta}=R_{m,n,-x}^{-\alpha,-\beta,-\gamma,-\delta}.
\end{equation}
\end{corollary}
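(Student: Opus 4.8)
The identity (\ref{rac.n}) is the Racah analogue of the dual Hahn symmetry (\ref{dha.n}), and the plan is to run exactly the same argument with the Racah weight in place of the dual Hahn one. Concretely, I would apply the modified Theorem \ref{mth2}, i.e. identity (\ref{mrby2}), with $T=\Delta$, with $\lambda$ the quadratic polynomial $\lambda^{\gamma,\delta}(x)=x(x+\gamma+\delta+1)$, with the choice $\psi_j=f_j$ (the polynomials of (\ref{deffj}) for $u=\gamma+\delta+1$), and with $(p_n)_n$ the monic Racah polynomials. Since both sides of (\ref{rac.n}) are rational functions of $\alpha,\beta,\gamma,\delta$, it suffices to prove it under the non-degeneracy hypotheses that turn the Racah moment functional $\rho_{\alpha,\beta,\gamma,\delta}$ into a bilinear form admitting a full sequence of orthogonal polynomials, and then extend by rational continuation in the parameters, exactly as was done in the Meixner and dual Hahn cases.

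The computation itself has three ingredients. First, one evaluates the generalized moments $\mu_j^0=\int f_j\,d\rho_{\alpha,\beta,\gamma,\delta}$; by the explicit form of the Racah weight these come out as a quotient of Pochhammer symbols, just as they did for the dual Hahn weight. Second, substituting this expression into the closed formula for $q_{n+m-1}^i$ established just before the dual Hahn subsection and carrying out the quadratic substitution $x\mapsto\lambda^{\gamma,\delta}(-x+i)$, one reorganizes the resulting terminating hypergeometric sum to obtain the Racah analogue of (\ref{dhqni}): $q_{n+m-1}^i(x)$ equals $\mu_i^0/(n+m-1)!$ times a degree-$(n+m-1)$ Racah polynomial in $\lambda^{\gamma,\delta}(-x+i)$ whose four parameters are explicit affine functions of $\alpha,\beta,\gamma,\delta$ and of $n,m,i$; starting from the parameters $(\alpha-n-m,\beta-n-m,\gamma-n-m,\delta)$ of the left-hand side of (\ref{rac.n}) these are the parameters $(-\alpha,-\beta,-\gamma,-\delta)$ up to shifts that still depend on $i$. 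Third, one applies (\ref{mrby2}) with the parameters $(\alpha-n-m,\beta-n-m,\gamma-n-m,\delta)$: on its left-hand side, using that a Casorati (shift) determinant equals the corresponding determinant of $\Delta$-powers (the unipotent row operation turning $p_{m+j-1}(\lambda(x+i-1))$ into $\Delta^{i-1}(p_{m+j-1}(\lambda(x)))$) and that $r_n^{\alpha,\beta,\gamma,\delta}$ is a constant multiple of the monic Racah polynomial, one recovers the numerator of $R_{n,m,x}^{\alpha-n-m,\beta-n-m,\gamma-n-m,\delta}$ up to an $x$-independent scalar --- the $x$-dependent denominator $\prod_{l=1}^{n-1}(2x+\gamma+\delta+1+l)_l$ built into the definition of $R$ being precisely what cancels the factor $\prod_{j=1}^{n-1}(2x+u+j)_j$ of the constant $D_{n,m}$ in (\ref{mrby2}).

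What remains is to identify the right-hand side of (\ref{mrby2}), the $m\times m$ determinant $\det(q_{n+m-1}^{j-1}(x-i+1))_{i,j=1}^m$, with the numerator of $R_{m,n,-x}^{-\alpha,-\beta,-\gamma,-\delta}$. Feeding in the formula from the second step, each entry is a scalar times a degree-$(n+m-1)$ Racah polynomial whose parameters and argument depend on the column index $j$; I would then iterate a suitable three-term contiguous relation for the Racah polynomials (which can be extracted from \cite{KLS}), performing the corresponding column operations. This is a triangular transformation, changing the determinant only by an explicit product of scalars, which lowers the degrees column by column and brings all four parameters to the common value $(-\alpha,-\beta,-\gamma,-\delta)$, so that the determinant becomes exactly the Casorati Racah determinant in the numerator of $R_{m,n,-x}^{-\alpha,-\beta,-\gamma,-\delta}$ (the remaining $x$-dependent factors being absorbed by the normalizing denominator built into that $R$ together with the residual $x$-dependence of $D_{n,m}$). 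Collecting all the scalar factors produced along the way --- $D_{n,m}$, $\Omega_{m-1}$, the monic normalizations, the $\mu_i^0/(n+m-1)!$ factors and the contiguous-relation coefficients --- and checking that they cancel completely yields (\ref{rac.n}). The statement for Wilson polynomials is proved in the same way, with the Wilson weight in place of the Racah one. The hard part will be this last step: locating the correct Racah contiguous relation and carrying out the four-parameter bookkeeping so that the arguments and all four parameters line up to produce exactly $R_{m,n,-x}^{-\alpha,-\beta,-\gamma,-\delta}$ with proportionality constant $1$; the moment evaluation and the application of (\ref{mrby2}) are routine transcriptions of the dual Hahn computation.
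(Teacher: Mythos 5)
Your plan is exactly the route the paper intends: Section 5.4.2 states this Racah symmetry ``without details,'' the implicit proof being the dual Hahn argument of the preceding subsection transplanted verbatim --- Theorem \ref{mth2} (identity (\ref{mrby2})) with $T=\Delta$, $u=\gamma+\delta+1$, $\psi_j=f_j$, the Racah moments $\mu_j^0$, identification of $q_{n+m-1}^i$ as a Racah polynomial in $\lambda^{\gamma,\delta}(-x+i)$ with reflected parameters, a contiguous relation to remove the $i$-dependent parameter shifts, and rational continuation in the parameters. Since the paper supplies no further detail, your proposal matches its approach essentially step for step; the unexecuted bookkeeping you flag (the explicit Racah analogue of (\ref{dhqni}) and the contiguous-relation column operations) is precisely what the paper also leaves to the reader.
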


\bigskip

For $a, b, c, d\in \RR $, $a+b+c+d\not =0,-1,-2,\cdots$, we write $(w_{n}^{a,b,c,d})_n$ for the sequence of Wilson polynomials
 defined by
$$
w_{n}^{\alpha,\beta,\gamma,\delta}(x)=\sum_{j=0}^n\frac{
(a+b+j)_{n-j}(a+c+j)_{n-j}(a+d+j)_{n-j}}{(n-j)!(n+a+b+c+d-1+j)_{n-j}}g_{j}(x),
$$
where
$$
g_j(x)=\frac{(-1)^j}{j!}\prod_{i=0}^{j-1}(x+(a+i)^2)
$$
(we have taken a slightly different normalization from the one used in  \cite{KLS}, pp, 185-190).

Consider now the following (normalized) Casorati Wilson determinant
$$
W_{n,m,x}^{a,b,c,d}=\frac{\det\big( w_{m+j-1}^{a,b,c,d}(-(x+i-1)^2)\big)_{i,j=1}^n}
{\prod_{l=1}^{n-1}(2x+l)_l}.
$$

\begin{corollary}
For $n,m\ge 0$, we have
\begin{equation}\label{wil.n}
W_{n,m,x+a-\frac{n}{2}-\frac{m}{2}}^{a-\frac{n}{2}-\frac{m}{2},b-\frac{n}{2}-\frac{m}{2},c-\frac{n}{2}-\frac{m}{2},d-\frac{n}{2}-\frac{m}{2}}=
W_{m,n,-x-a+1}^{-d+1,-b+1,-a+1,-c+1}.
\end{equation}
\end{corollary}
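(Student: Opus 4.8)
The plan is to reproduce, for the Wilson family, exactly the scheme that proved the dual Hahn corollary (\ref{dha.n}) from Theorem~\ref{mth2}. The first step is to fit $(w_n^{a,b,c,d})_n$ into the hypotheses of Theorem~\ref{mth2}. The polynomials $g_j(x)=\frac{(-1)^j}{j!}\prod_{i=0}^{j-1}(x+(a+i)^2)$ used in the definition of $w_n^{a,b,c,d}$ are not literally of the form (\ref{deffj}), but after completing the square one has $(a+i)^2-y^2=(a+i-y)(a+i+y)$, so that an affine change of the variable together with the choice $u=2a$ and an overall power of $-1$ turns $g_j$, composed with the quadratic map $y\mapsto -(y)^2$, into the polynomials $r_j$ of (\ref{defrj}) up to normalization. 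After this reduction the Wilson polynomials $w_n^{a,b,c,d}$ (in the quadratic variable) are the pseudo-orthogonal polynomials $p_n$ of (\ref{fdpo3}) for the bilinear form $\langle p,q\rangle=\int pq\,d\mu$ given by the Wilson orthogonality measure $\mu$, with $\psi_j=g_j$; note that the appearance of $u=2a$ is precisely what forces all the parameter shifts in (\ref{wil.n}) to be by $(n+m)/2$ rather than by $n+m$, since $u$ itself shifts by $n+m$ exactly as it does in the dual Hahn and Racah cases.

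The second step is to compute the generalized moments $\mu_j^0=\int g_j\,d\mu$ in closed form — as for dual Hahn these are ratios of Pochhammer symbols and Gamma functions in $a,b,c,d$ — and then to substitute them into the explicit formula for $q_{n+m-1}^i$ established just before the dual Hahn subsection, namely $q_{n+m-1}^i(x)=\sum_{g}\mu_{g+i}^0\binom{g+i}{i}(x-i)_{n+m-1-g}\,(x+u+g+i)_{n+m-1-g}/(n+m-1-g)!$. Recognising the resulting sum as a terminating balanced hypergeometric series of ${}_4F_3$-type, one shows that $q_{n+m-1}^i$ equals, up to a quadratic change of variable and a multiplicative constant, again a Wilson polynomial — with the reflected parameters $-d+1,-b+1,-a+1,-c+1$ — the exact analogue of (\ref{dhqni}). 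Feeding this identification into (\ref{mrby2}) then turns the left-hand side into the normalised Casorati Wilson determinant $W_{n,m,\cdot}$ with the shifted parameters $a-\tfrac n2-\tfrac m2$, etc. (the normalisation $\prod_{l=1}^{n-1}(2x+l)_l$ absorbing the corresponding factor of $C_{n,m}$), while the right-hand side, after one contiguity relation for Wilson polynomials — the analogue of the three-term identity for dual Hahn used in the proof of (\ref{dha.n}) — collapses to $W_{m,n,-x-a+1}$ with parameters $-d+1,-b+1,-a+1,-c+1$, giving (\ref{wil.n}). As in the dual Hahn proof, since both sides of (\ref{wil.n}) are rational functions of $a,b,c,d$, it suffices to establish the identity in the parameter range where $\mu$ exists and the $q_n$ are genuinely orthogonal, the general case following by analytic continuation.

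The main obstacle is the second step: proving that the explicit sum for $q_{n+m-1}^i$ is indeed a Wilson polynomial in the quadratic variable with the precise reflected parameters claimed. This is a genuine, somewhat delicate hypergeometric manipulation — rewriting the sum as a balanced ${}_4F_3$ at $1$ and matching it, via a Whipple--Sears type transformation together with the self-duality of the Wilson ${}_4F_3$, with the standard representation of $w_n^{-d+1,-b+1,-a+1,-c+1}$ — and it is exactly here that the pattern of the parameter permutation ($a\mapsto-d+1$, $b\mapsto-b+1$, $c\mapsto-a+1$, $d\mapsto-c+1$) and of the $x$-shift by $a-\tfrac n2-\tfrac m2$ gets pinned down. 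Everything else (the Racah case is entirely parallel, with $u=\gamma+\delta+1$ in place of $u=2a$ and no half-integer shifts, and the bookkeeping of the constants in $C_{n,m}$ and of the normalising denominators) is routine.
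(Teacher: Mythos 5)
Your outline is exactly the route the paper intends: Section 5.4.2 states this Wilson corollary ``without details'' as a direct analogue of the dual Hahn case, namely reduce to Theorem \ref{mth2} via the identification $g_j(-(x')^2)=(-1)^j r_j^{2a}(x'-a)$ (so $u=2a$, which is indeed what forces the half-integer parameter shifts, shifts $u$ by $n+m$ as in the dual Hahn case, and lets the normalizing product $\prod_{l=1}^{n-1}(2x+l)_l$ absorb the factors $(2x+u+j)_j$ from $C_{n,m}$), then identify $q_{n+m-1}^i$ as a reflected-parameter Wilson polynomial in the quadratic variable, pass through (\ref{mrby2}), remove the residual parameter dependence with a contiguity relation, and extend by rationality in $a,b,c,d$. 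The only caveat is that, exactly like the paper, you describe but do not execute the decisive hypergeometric step identifying $q_{n+m-1}^i$ (the analogue of (\ref{dhqni})), which is where the permutation $(a,b,c,d)\mapsto(-d+1,-b+1,-a+1,-c+1)$ and the subscript $-x-a+1$ are actually pinned down, so your proposal is a correct plan at the same level of detail as the paper rather than a full verification.
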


\subsection{The operator $T_\mu (p_n)=p_{n-1}$}\label{ss5.3}
Let $(p_n)_n$ be a sequence of orthogonal polynomials with respect to a positive measure $\mu$, and consider the linear operator
$T_\mu$ defined by linearity from $T_\mu(p_n)=p_{n-1}$. Notice that the determinant in the left hand side of (\ref{mrby}) has now a Toeplitz structure.

We take
$$
r_n(x)=p_n(x),\quad \psi_i(x)=p_i(x).
$$
Define finally the polynomials $s_n$, $n\ge 0$, by (\ref{defsn}).

We then get
$$
\mu^i_j=\int \psi_ir_jd\mu=\int p_ip_jd\mu=\begin{cases} \Vert p_i\Vert ^2, &\mbox{if $j=i$},\\ 0,&\mbox{if $j\not =i$.}\end{cases}
$$
This gives
\begin{equation}\label{qnitm}
q_n^i=\sum_{j=0}^n\mu^i_js_{n-j}=\Vert p_i\Vert ^2 s_{n-i}.
\end{equation}
Hence the determinant in the right hand side of (\ref{mrby}) has the Toeplitz structure
$$
\left| \begin{matrix}s_n(x)&s_{n-1}(x)&s_{n-2}(x)&\cdots &s_{n-m+2}(x)&s_{n-m+1}(x)\\
s_{n+1}(x)&s_{n}(x)&s_{n-1}(x)&\cdots &s_{n-m+3}(x)&s_{n-m+2}(x)\\
\vdots&\vdots&\vdots & &\vdots &\vdots\\
s_{n+m-1}(x)&s_{n+m-2}(x)&s_{n+m-3}(x)&\cdots &s_{n+1}(x)&s_{n}(x)\\
\end{matrix}\right| .
$$
\bigskip

For some families of orthogonal polynomials, the polynomials
$q_n^i$ (\ref{qnitm}), $n,i\ge 0$, are, up to
change of variable, actually polynomials of the same family (but with different parameters). Among these families are Laguerre,  ultraspherical, Charlier and Meixner polynomials.

\subsubsection{Charlier polynomials}
Since $\Delta(\hat  c_n^a(x))=n\hat c_{n-1}^a(x)$, the determinants in (\ref{mrby}) for the operator $T_\mu$ are essentially the same that for the first order operator $\Delta$ (see Section \ref{ss5.2.1}).

\subsubsection{Laguerre polynomials}
The generating function for the Laguerre polynomials (\ref{deflap}) is
$$
\sum_{n=0}^\infty L_n^\alpha (x)t^n=(1-t)^{-\alpha-1}e^{xt/(t-1)}.
$$
Using (\ref{gfdi}) we get that $s_n(x)=L^{-\alpha-2}_n(-x)$, and, hence, taking into account (\ref{qnitm}) we deduce that
also $q_n^i(x)=\Vert L_i^\alpha\Vert ^2L^{-\alpha-2}_{n-i}(-x)$.

\subsubsection{Ultraspherical polynomials}\label{ss5.3.3}
Consider the ultraspherical (o Gegenbauer) polynomials $(C_n^\lambda)_n$, $\lambda \not =0$, defined by the generating function
\begin{equation}\label{ufg}
(1-2xz+z^2)^{-\lambda}=\sum_{n=0}^\infty C_n^\lambda(x)z^n.
\end{equation}
If $\lambda =-k$, with $k$ a positive integer, we have $C_n^{-k}=0$, $n\ge 2k$. It turns out that the finite sequence of polynomials $C_n^{-k}$, $n=0,\cdots , k$, satisfies a three term recurrence relation and hence they are orthogonal with respect to a (signed) measure.

For $\lambda=0$, the ultraspherical polynomials reduce to the Chebyshev polynomials of the first kind defined by the generating function
$$
\frac{1-xz}{1-2xz+z^2}=\sum_{n=0}^\infty C_n^0(x)z^n.
$$
The ultraspherical family $(C_n^\lambda)_n$ is the particular case of Jacobi polynomials for $\alpha=\beta=\lambda-1/2$.

Using (\ref{gfdi}) we get that $s_n(x)=C^{-\lambda}_n(x)$, $\lambda \not=0$ and, hence, taking into account (\ref{qnitm}) we deduce that also $q_n^i=\Vert C_i^\lambda\Vert ^2C^{-\lambda}_{n-i}(x)$. In particular, for $\lambda=k$ with $k$ a positive integer, we get $s_n=0$, $n\ge 2k+1$.

For the Chebyshev polynomials of the first kind, it is easy to see that $s_0=1$, $s_1(x)=-x$ and $s_n(x)=x^{n-2}(1-x^2)$, $n\ge 2$. And for the  Chebyshev polynomials of the second kind, we have $s_0=1$, $s_1(x)=-2x$, $s_2(x)=1$ and $s_n(x)=0$, $n\ge 3$. As a consequence $\det \left( q_{n+i-1}^{j-1}\right) _{i,j=1}^m=0$ for $n\ge 3$ and $m\ge 2$. Theorem \ref{mti} then implies that for $(p_n)_n$ the Chebyshev polynomials of the first or second kind, also $\det \left( p_{m+j-i}(x))\right) _{i,j=1}^n=0$ for $n\ge 3$ and $m\ge 2$ (where we take $p_n=0$ for $n<0$). This
is, anyway, an easy consequence of the fact that Chebyshev polynomials of the first and second kind have, essentially, constant recurrence coefficients.

\subsubsection{Meixner polynomials}\label{5.m.4}
For $a\not = 0,1$, consider the Meixner polynomials defined by the generating function
$$
\sum_{n=0}^\infty m_n^{a,c} (x)t^n=\left(1-\frac{t}{a}\right)^{x}(1-t)^{-x-c}.
$$
A straightforward computation shows that
$$
m_n^{a,c}(x)=\frac{(a-1)^n}{a^nn!}\hat m_n^{a,c}(x),
$$
where $\hat m_n^{a,c}(x)$ is the $n$-th monic Meixner polynomial defined by (\ref{mpol}).

Using (\ref{gfdi}) we get that $s_n(x)=m^{a,-c}_n(-x)$, and, hence, taking into account (\ref{qnitm}) we deduce that
also $q_n^i=\Vert m_i^{a,c}\Vert ^2m^{a,-c}_{n-i}(-x)$.

\section{Selberg type integrals and sums}
Write $\xx$ for the multivariable $\xx =(x_1,\cdots ,x_m)$ and $\Lambda (\xx )$ for the Vandermonde determinant:
\begin{equation}\label{defVd2}
\Lambda (\xx )=\prod_{1\le i<j\le m}(x_i-x_j).
\end{equation}
Given a measure $\mu$, consider the multiple integral
\begin{equation}\label{sti}
\int \cdots \int s(\xx )\Lambda^{2\gamma}(\xx )d\mu(\xx ),
\end{equation}
where $d\mu(\xx )=d\mu(x_1)\cdots d\mu(x_m)$ and $s(\xx )$ is a symmetric polynomial in the variable $x_1,\cdots , x_m$.

When $\mu$ is the Jacobi  measure $d\mu =x^{\alpha -1}(1-x)^{\beta -1} dx$, $\alpha , \beta >0$, and $s(\xx )=1$, the integral (\ref{sti}) is the celebrated Selberg integral (\cite{Se}, see also \cite{FW}):
$$
\int _0^1 \cdots \int_0^1 \Lambda^{2\gamma}(\xx ) \prod_{j=1}^mx_j^{\alpha -1}(1-x_j)^{\beta -1}d\xx=\prod_{j=0}^{m-1}\frac{\Gamma(\alpha+j\gamma)\Gamma(\beta+j\gamma)
\Gamma(1+(j+1)\gamma)}{\Gamma(\alpha+\beta +(m+j-1)\gamma)\Gamma(1+\gamma)}.
$$
As we wrote in the Introduction, other choices of $s(\xx )$ give well-known extensions of the Selberg integral. For instance, $s(\xx )=\prod_{j=1}^m(x_j-u)$ is Aomoto's integral \cite{Ao}, and $s(\xx )=J_\lambda(x_1,\cdots, x_m,1/\gamma)$, where $\gamma \in \NN $ and
$J_\lambda$ is a Jack polynomial, is Kadell's integral \cite{Ka2}. For $s(\xx )=1$ and $\mu$ the Hermite measure $d\mu =e^{-x^2/2}dx$, the integral (\ref{sti}) is the also celebrated Mehta integral.

In this Section, we prove Theorem \ref{tise} in the Introduction. That is, we
show that for certain operators $T$ the determinant in the right hand side of (\ref{mrby}) can be rewritten as an integral of the form (\ref{sti}) for $\gamma=1$ and $s(\xx )=\prod_{j=1}^mr_n(x_j)$. Hence, the determinant in the left hand side of (\ref{mrby}) gives the value of this integral.

The main idea is to use a variant of the Heine's integral formula (\cite{Hei}) for a Hankel matrix: If $\mu$ is a measure on the real line and we write
$\mu_j=\int x^jd\mu $, $j\ge 0$, for its moments, then
$$
\det ((\mu_{i+j-2})_{i,j=1}^n)=\int \cdots \int \Lambda^{2}(\xx )d\mu(\xx ).
$$
We need the following lemma
\begin{lemma}\label{llhi} Let $\mu$ be a measure in the real line and consider a sequence of polynomials $(\psi_n)_n$ with $\psi_n$ of degree $n$ and leading coefficient equal to $\upsilon_n$. Then, for any symmetric polynomial $s(\xx )$ we have
\begin{equation}\label{lhi}
\idotsint s(\xx )\Lambda(\xx )\prod_{j=1}^m\psi_{j-1}(x_j)d\mu(\xx )=\frac{(-1)^{\binom{m}{2}}\prod_{j=0}^{m-1}\upsilon_j}{m!}\idotsint s(\xx )\Lambda^{2}(\xx )d\mu(\xx ).
\end{equation}
\end{lemma}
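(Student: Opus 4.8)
The plan is to expand the Vandermonde determinant $\Lambda(\xx)$ in one of the two factors as an alternating sum over permutations and use symmetry to recognize the result. Concretely, write
$$
\Lambda(\xx) = \sum_{\sigma \in S_m} \sign(\sigma)\, \prod_{j=1}^m x_{\sigma(j)}^{m-j},
$$
and similarly expand the product $\prod_{j=1}^m \psi_{j-1}(x_j)$. The first step is to observe that since $\psi_{j-1}$ has degree $j-1$ and leading coefficient $\upsilon_{j-1}$, we may replace $\psi_{j-1}(x_j)$ by $\upsilon_{j-1} x_j^{j-1}$ up to lower-order monomials; more precisely, I would argue that the matrix $(\psi_{j-1}(x_i))_{i,j=1}^m$ has determinant $\bigl(\prod_{j=0}^{m-1}\upsilon_j\bigr)\Lambda(\xx)$ up to sign, because column operations reduce $(\psi_{j-1}(x_i))$ to the Vandermonde matrix $(x_i^{j-1})$ without changing the determinant except by the product of leading coefficients. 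Since I only have the \emph{product} $\prod_j \psi_{j-1}(x_j)$ and not the determinant, I instead symmetrize: the integrand on the left is not symmetric, but after multiplying by $s(\xx)\Lambda(\xx)$ and integrating against the symmetric measure $d\mu(\xx)$, I may replace $\prod_j \psi_{j-1}(x_j)$ by its average over $S_m$ of $\prod_j \psi_{j-1}(x_{\tau(j)})$, and that average, multiplied by $\Lambda(\xx)$, equals $\frac{1}{m!}\det(\psi_{j-1}(x_i))_{i,j=1}^m \cdot \Lambda(\xx)/\Lambda(\xx)$ — wait, more carefully: $\sum_{\tau}\sign(\tau)\prod_j \psi_{j-1}(x_{\tau(j)}) = \det(\psi_{j-1}(x_i))_{i,j}$, and pairing the $\sign(\tau)$ with the sign picked up by $\Lambda(\xx)$ under relabeling is exactly what produces the second power of the Vandermonde.

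So the key steps, in order, are: (1) Fix the identity $\det(\psi_{j-1}(x_i))_{i,j=1}^m = (-1)^{\binom{m}{2}}\bigl(\prod_{j=0}^{m-1}\upsilon_j\bigr)\Lambda(\xx)$ by column-reducing to the Vandermonde matrix (the sign $(-1)^{\binom{m}{2}}$ comes from the ordering convention for $\Lambda$ in \eqref{defVd2}, where $\Lambda(\xx) = \prod_{i<j}(x_i-x_j)$ rather than $\prod_{i<j}(x_j-x_i)$). (2) In the left-hand integral of \eqref{lhi}, use that $d\mu(\xx)$ and $s(\xx)$ are invariant under any permutation of the variables to write, for each $\tau \in S_m$,
$$
\idotsint s(\xx)\Lambda(\xx)\prod_{j=1}^m\psi_{j-1}(x_j)\,d\mu(\xx) = \sign(\tau)\idotsint s(\xx)\Lambda(\xx)\prod_{j=1}^m\psi_{j-1}(x_{\tau(j)})\,d\mu(\xx),
$$
where the factor $\sign(\tau)$ appears because relabeling $x_i \mapsto x_{\tau(i)}$ multiplies $\Lambda(\xx)$ by $\sign(\tau)$ while leaving $s$ and $d\mu$ fixed. (3) Average this identity over all $\tau \in S_m$ and divide by $m!$, obtaining
$$
\idotsint s(\xx)\Lambda(\xx)\prod_{j=1}^m\psi_{j-1}(x_j)\,d\mu(\xx) = \frac{1}{m!}\idotsint s(\xx)\Lambda(\xx)\det\bigl(\psi_{j-1}(x_i)\bigr)_{i,j=1}^m\,d\mu(\xx).
$$
(4) Substitute the identity from step (1) into the determinant, which turns $\Lambda(\xx)\det(\psi_{j-1}(x_i))$ into $(-1)^{\binom{m}{2}}\bigl(\prod_{j=0}^{m-1}\upsilon_j\bigr)\Lambda^2(\xx)$, and pull the constant out of the integral to arrive at \eqref{lhi}.

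The step I expect to require the most care is step (2) — getting the sign bookkeeping right. One must be careful that the change of variables in the integral (a genuine relabeling of integration variables, legitimate since $d\mu(\xx)$ is a product measure) produces exactly $\sign(\tau)$ from $\Lambda$ and nothing else, and that this $\sign(\tau)$ then combines with the sum defining the determinant with the correct orientation, rather than cancelling. The only other point worth checking explicitly is that the column reduction in step (1) genuinely leaves the determinant unchanged (each column $j$ of $(\psi_{j-1}(x_i))$ is $\upsilon_{j-1}$ times the $j$-th Vandermonde column plus a combination of earlier columns, so subtracting off those earlier columns is a unimodular operation), and that the resulting $\det(x_i^{j-1})_{i,j=1}^m$ equals $(-1)^{\binom{m}{2}}\Lambda(\xx)$ under the convention \eqref{defVd2}. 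None of this is deep; it is purely a matter of tracking conventions consistently. Note also that \eqref{lhi} is stated with $d\mu$ on the left but the genuine Heine-type formula $\det((\mu_{i+j-2})_{i,j=1}^n)=\idotsint \Lambda^2(\xx)\,d\mu(\xx)$ quoted just above is the special case $s \equiv 1$, $\psi_{j-1}(x)=x^{j-1}$, which serves as a consistency check on the constant $(-1)^{\binom{m}{2}}\prod_{j=0}^{m-1}\upsilon_j/m!$ (there $\upsilon_j=1$, so the constant is $(-1)^{\binom{m}{2}}/m!$, matching the symmetrization of a Hankel determinant).
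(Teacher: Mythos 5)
Your proposal is correct and follows essentially the same route as the paper: symmetrize the left-hand integral by relabeling the variables over all permutations (picking up $\sign(\tau)$ from $\Lambda$), recognize the resulting alternating sum as $\det(\psi_{j-1}(x_i))_{i,j=1}^m$, and reduce that determinant to $(-1)^{\binom{m}{2}}\bigl(\prod_{j=0}^{m-1}\upsilon_j\bigr)\Lambda(\xx)$ by column operations. The sign and constant bookkeeping you outline matches the paper's argument exactly.
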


\begin{proof}
For each permutation $\sigma$ of $\{1,\cdots ,m\}$, we perform in the integral in the left hand side of (\ref{lhi}) the change of variable $y_i=x_{\sigma(i)}$. Since $s$ is a symmetric polynomial and $\Lambda (\yy )=(-1)^{\epsilon(\sigma)}\Lambda(\xx )$, where $\epsilon (\sigma)$  denotes the signature of the permutation $\sigma$, we get
$$
\idotsint s(\xx )\Lambda(\xx )\prod_{j=1}^m\psi_{j-1}(x_j)d\mu(\xx )=
(-1)^{\epsilon(\sigma)} \idotsint s(\xx )\Lambda(\xx )\prod_{j=1}^m\psi_{j-1}(x_{\sigma (j)})d\mu(\xx ).
$$
Summing up over all permutations we find
\begin{align*}
\idotsint s(\xx )\Lambda(\xx )\prod_{j=1}^m\psi_{j-1}(x_j)d\mu(\xx )&=\frac{1}{m!}
\idotsint s(\xx )\Lambda(\xx )\sum_{\sigma}(-1)^{\epsilon(\sigma)}\prod_{j=1}^m\psi_{j-1}(x_{\sigma (j)})d\mu(\xx )\\
&=\frac{1}{m!}\idotsint s(\xx )\Lambda(\xx )\det((\psi_{j-1}(x_i))_{i,j=1}^m)d\mu(\xx ).
\end{align*}
The latter determinant can be
easily transformed into a Vandermonde determinant by elementary column
operations (see \cite{Kra}, Prop. 1) to get
$\det((\psi_{j-1}(x_i))_{i,j=1}^m)=(-1)^{\binom{m}{2}}\left( \prod_{j=0}^{m-1}\upsilon_j\right) \Lambda (\xx)$.
The proof is now finished.
\end{proof}

Given a number $u$ and an operator $T$, we associate to $T$ the polynomials $r_n,s_n$, $n\ge 0$, as in (\ref{defrn}), (\ref{defsn}). In this case we take $r_n(u)=0$, $n\ge 1$ (these conditions define $r_n$, $n\ge 0$, uniquely). We write $r_n=r_{n,u}$ to stress the dependence of $r_n$ on $u$. As an easy consequence we have that also $s_n(u)=0$, $n\ge 1$, (see (\ref{defsn})). This gives that $q_n^i(u)=\mu ^i_n=\int\bar \psi_ir_nd\mu$, where $q_n^i$ are the polynomials given by (\ref{defqny}). Hence the determinant in the right hand side of (\ref{mrby}) evaluated at $x=u$ can be rewritten as
\begin{align}\nonumber
\det\left( q^{j-1}_{n+i-1}(u)\right)_{i,j=1}^m&=\det\left( \int \bar\psi_{j-1}(x_j)r_{n+i-1,u}(x_j)d\mu(x_j)\right)_{i,j=1}^m \\ \label{hfpx}&=
\int \cdots \int \left(\prod_{j=1}^m\bar\psi_{j-1}(x_j)\right)
\det\left( r_{n+i-1,u}(x_j)\right)_{i,j=1}^m d\mu(\xx ).
\end{align}

We are now ready to prove Theorem \ref{tise} in the Introduction.

\begin{proof}
We first prove that
\begin{equation}\label{hfp}
\det\left( q^{j-1}_{n+i-1}(u)\right)_{i,j=1}^m=\int \cdots \int \frac{\Lambda ^2(\xx)}{m!}\prod_{j=1}^{m}\frac{\bar\upsilon_{j-1}\sigma_{n+j-1}}{\sigma_n}r_{n,u}(x_j)
d\mu(\xx ),
\end{equation}
where $\upsilon_n, \sigma_n$ are the leading coefficients of the polynomials $\psi_n$ and $r_n$, respectively.
Since $r_{n,u}(x)=\sigma_n\prod_{j=1}^n(x-a_j)$, we get
\begin{equation}\label{hfpxx}
\det\left( r_{n+i-1}(x_j)\right)_{i,j=1}^m=\det\left(h_{i-1}(x_j)\right)_{i,j=1}^m\prod_{j=1}^{m}\frac{\sigma_{n+j-1}}{\sigma_n}r_{n,u}(x_j)
\end{equation}
where $h_i(x)=\prod_{j=n+1}^{n+i}(x-a_j)$. The determinant in the right hand side of (\ref{hfpxx}) can be
easily transformed into a Vandermonde determinant by elementary column
operations (see \cite{Kra}, Prop. 1) to get
$\det\left(h_{i-1}(x_j)\right)_{i,j=1}^m=(-1)^{\binom{m}{2}}\Lambda (\xx)$.
Hence (\ref{hfpx}) and (\ref{hfpxx}) give
$$
\det\left( q^{j-1}_{n+i-1}(u)\right)_{i,j=1}^m=(-1)^{\binom{m}{2}}\int \cdots \int \left(\prod_{j=1}^{m}\frac{\sigma_{n+j-1}}{\sigma_n}r_{n,u}(x_j)\right)
\Lambda (\xx)\prod_{j=1}^m\bar\psi_{j-1}(x_j) d\mu(\xx ).
$$
(\ref{hfp}) now follows easily using Lemma \ref{llhi}.
The proof of the Theorem can be finished by using Theorem \ref{mti} and the formula (\ref{oitnm}) for $\Omega _n$ in terms of the norm of the monic orthogonal polynomials.
\end{proof}

The  relevant operators $T=d/dx$ and $T=\Delta$ satisfies the hypothesis of Theorem \ref{tise} with
$$
r_{n,u}(x)=\frac{(x-u)^n}{n!},\quad \mbox{and $\displaystyle r_{n,u}(x)=\binom{x-u}{n}$,}
$$
respectively.
Hence, we have

\begin{corollary}
Let $\mu$ be a measure and write $(\hat p_n)_n$ for the sequence of monic orthogonal polynomials with respect to $\mu$. Then
\begin{align*}\label{stfdu1}
\int \cdots \int \Lambda ^2(\xx )\prod_{j=1}^m(x_j-u)^n d\mu(\xx )&=C_{n,m}\det\left( \hat p_{m+j-1}^{(i-1)}(u)\right)_{i,j=1}^n,\\
\int \cdots \int \Lambda ^2(\xx )\prod_{j=1}^m(x_j-u-n+1)_n d\mu(\xx )&=C_{n,m}\det\left( \Delta^{i-1}(\hat p_{m+j-1}(u))\right)_{i,j=1}^n,
\end{align*}
where
$$
C_{n,m}=\frac{(-1)^{mn}m!\prod_{j=0}^{m-1}\Vert \hat p_j\Vert^2}{\prod_{j=0}^{n-1}j!}.
$$
\end{corollary}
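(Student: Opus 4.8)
The plan is to deduce both identities as immediate specializations of Theorem~\ref{tise}, applied with $T=d/dx$ for the first identity and with $T=\Delta$ for the second, followed by a short simplification of the constant $C_{T,n,m}$.

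First I would record that the two choices of $r_{n,u}$ indicated just above satisfy the hypothesis of Theorem~\ref{tise}. For $T=d/dx$, the polynomials $r_{n,u}(x)=(x-u)^n/n!$ obey $r_0=1$, $\dgr(r_n)=n$, $(d/dx)(r_n)=r_{n-1}$ and $r_{n,u}(u)=0$ for $n\ge1$, so they are the sequence $(r_{n,u})_n$ of the theorem; moreover $r_{n,u}(x)=\sigma_n\prod_{j=1}^n(x-a_j)$ with leading coefficient $\sigma_n=1/n!$ and $a_j=u$ for all $j$, in particular $a_1=u$. For $T=\Delta$, the polynomials $r_{n,u}(x)=\binom{x-u}{n}$ obey $\Delta(r_n)=\binom{x-u}{n-1}=r_{n-1}$ by Pascal's rule and $r_{n,u}(u)=\binom{0}{n}=0$ for $n\ge1$; writing $\binom{x-u}{n}=\frac1{n!}(x-u)(x-u-1)\cdots(x-u-n+1)$ one reads off $\sigma_n=1/n!$ and $a_j=u+j-1$, so again $a_1=u$. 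Thus Theorem~\ref{tise} applies in both cases, with $T^{i-1}(\hat p_{m+j-1})$ equal to $\hat p_{m+j-1}^{(i-1)}$ when $T=d/dx$ and to $\Delta^{i-1}(\hat p_{m+j-1})$ when $T=\Delta$.

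Next I would substitute these data into identity~(\ref{hfpi}). When $T=d/dx$ one has $\prod_{j=1}^m r_{n,u}(x_j)=(n!)^{-m}\prod_{j=1}^m(x_j-u)^n$, and when $T=\Delta$ one has $\prod_{j=1}^m r_{n,u}(x_j)=(n!)^{-m}\prod_{j=1}^m(x_j-u-n+1)_n$, because the $n$ linear factors of $\binom{x-u}{n}$ are $x-u,x-u-1,\dots,x-u-n+1$. Multiplying both sides of~(\ref{hfpi}) by $(n!)^m$ therefore yields exactly the two displayed formulas, with common constant $(n!)^m C_{T,n,m}$. Since $\sigma_j=1/j!$ for every $j\ge0$, we have $\sigma_n^m=(n!)^{-m}$ and $\prod_{j=0}^{n-1}\sigma_j=\prod_{j=0}^{n-1}(j!)^{-1}$, whence
\[
(n!)^m C_{T,n,m}=(n!)^m(-1)^{mn}m!\,(n!)^{-m}\prod_{j=0}^{m-1}\Vert\hat p_j\Vert^2\prod_{j=0}^{n-1}\frac1{j!}=\frac{(-1)^{mn}m!\prod_{j=0}^{m-1}\Vert\hat p_j\Vert^2}{\prod_{j=0}^{n-1}j!},
\]
which is the asserted $C_{n,m}$.

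I expect no real obstacle: the corollary is a pure specialization of Theorem~\ref{tise}. The only points requiring mild care are pinning down the leading coefficients $\sigma_n$ for each operator, confirming that the zero sequence $(a_j)_j$ of each $r_{n,u}$ begins with $a_1=u$ as Theorem~\ref{tise} demands, and keeping track of the scalar $(n!)^m$ that the normalization of $r_{n,u}$ introduces. A quick sanity check is that the case $n=1$ of either identity recovers the classical evaluation mentioned after the statement of Theorem~\ref{tise}.
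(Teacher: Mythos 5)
Your proposal is correct and follows exactly the paper's route: the paper also obtains this corollary by noting that $T=d/dx$ and $T=\Delta$ satisfy the hypotheses of Theorem~\ref{tise} with $r_{n,u}(x)=(x-u)^n/n!$ and $r_{n,u}(x)=\binom{x-u}{n}$, and then specializing (\ref{hfpi}); your bookkeeping of $\sigma_j=1/j!$, of $a_1=u$, and of the factor $(n!)^m$ matches the constant $C_{n,m}$ as stated.
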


When $\mu$ is one of the the classical weights of Jacobi, Laguerre or Hermite, and $u=0$, the determinant in the right hand side of the first identity of the previous Corollary can be easily computed using standard determinant techniques. In particular, for $\mu$ the Jacobi weight and $u=0$, we recover the Selberg integral for $\gamma=1$. For $\mu$ the Jacobi weight and $n=1$ we recover Aomoto's result for (\ref{sti}) when $\gamma=1$ (\cite{Ao}).
In a similar way, for the classical discrete polynomials we get a deduction for Askey conjectures 5, 6 and 7 in \cite{Ask} when $\gamma=1$.

\bigskip

Theorem \ref{tise} can be rewritten only in terms of the polynomials $r_n$.

\begin{corollary}\label{corf} Let $(a_n)_n, (\sigma _n)_n$ be two sequences of numbers with $\sigma _0=1$, $\sigma_n\not =0$, $n\ge 1$. Write $r_{n}$, $n\ge 0$, for the polynomials of degree $n$ defined by $r_0=1$, $r_{n}(x)=\sigma_n\prod_{j=1}^n(x-a_j)$, $n\ge 1$. Consider a  measure $\mu$ and its associated sequence $(\hat p_n)_n$ of monic orthogonal polynomials. For each $n\ge 0$, we can write $\hat p_n(x)=\sum_{j=0}^n \theta^n_jr_j(x)$. Then
\begin{equation}\label{hfpif}
\int \cdots \int \Lambda ^2(\xx)\prod_{j=1}^{m}r_{n}(x_j)
d\mu(\xx )=C_{T,n,m}\det\left( \theta ^{m+j-1}_{i-1}\right)_{i,j=1}^n,
\end{equation}
where
$$
C_{T,n,m}=(-1)^{mn}m!\sigma_n^m\prod_{j=0}^{m-1}\Vert \hat p_j\Vert^2\prod_{j=0}^{n-1}\sigma_{j}.
$$
\end{corollary}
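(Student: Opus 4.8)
The plan is to derive (\ref{hfpif}) directly from Theorem \ref{tise} by identifying the two determinants that appear on the right-hand sides. Put $u=a_1$. Since $\dgr(r_n)=n$ and $r_0=1$, the sequence $(r_n)_n$ is a basis of $\PP$, so there is a unique linear operator $T$ on $\PP$ determined by $T(r_n)=r_{n-1}$ for $n\ge1$ and $T(1)=0$; writing an arbitrary polynomial in the basis $(r_n)_n$ one checks at once that $\dgr(T(p))=\dgr(p)-1$ for every $p$. Because $a_1=u$ we have $r_n(u)=\sigma_n\prod_{j=1}^n(u-a_j)=0$ for $n\ge1$, so $(r_n)_n$ coincides (by the uniqueness statement recalled before Theorem \ref{tise}) with the sequence $(r_{n,u})_n$ attached to this $T$, and the hypothesis $r_{n,u}(x)=\sigma_n\prod_{j=1}^n(x-a_j)$ of Theorem \ref{tise} holds verbatim.

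Next I would evaluate the entries $T^{i-1}(\hat p_{m+j-1}(u))$. Expanding $\hat p_k=\sum_{l=0}^k\theta^k_lr_l$ and iterating $T(r_l)=r_{l-1}$ (together with $T(r_0)=T(1)=0$) gives $T^{i-1}(r_l)=r_{l-i+1}$ when $l\ge i-1$ and $T^{i-1}(r_l)=0$ otherwise, hence
\[
T^{i-1}(\hat p_k)(u)=\sum_{l=i-1}^{k}\theta^k_l\,r_{l-i+1}(u)=\theta^k_{i-1},
\]
the last equality because $r_0(u)=1$ while $r_s(u)=0$ for $s\ge1$. Taking $k=m+j-1$ yields $T^{i-1}(\hat p_{m+j-1}(u))=\theta^{m+j-1}_{i-1}$, so $\det\left(T^{i-1}(\hat p_{m+j-1}(u))\right)_{i,j=1}^n=\det\left(\theta^{m+j-1}_{i-1}\right)_{i,j=1}^n$ with no transposition or sign issue.

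Finally I would invoke Theorem \ref{tise} for this $T$ and this $u$, which gives
\[
\int\cdots\int\Lambda^2(\xx)\prod_{j=1}^m r_n(x_j)\,d\mu(\xx)=C_{T,n,m}\det\left(T^{i-1}(\hat p_{m+j-1}(u))\right)_{i,j=1}^n,
\]
with $C_{T,n,m}=(-1)^{mn}m!\sigma_n^m\prod_{j=0}^{m-1}\Vert\hat p_j\Vert^2\prod_{j=0}^{n-1}\sigma_j$, and substituting the identification of the determinant from the previous paragraph yields (\ref{hfpif}) with exactly the stated constant. The argument is essentially bookkeeping; the only point needing a little care is the collapse of $T^{i-1}(\hat p_k)(u)$ to the single term $\theta^k_{i-1}$, which rests on the two facts $T(1)=0$ and $r_s(u)=0$ for $s\ge1$ — both immediate once $u=a_1$ is fixed — so I do not anticipate a genuine obstacle.
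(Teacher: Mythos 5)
Your proposal is correct and follows essentially the same route as the paper: define $T$ by linearity via $T(r_n)=r_{n-1}$, $T(1)=0$, observe that $r_s(a_1)=0$ for $s\ge 1$ forces $T^{i-1}(\hat p_{m+j-1})(a_1)=\theta^{m+j-1}_{i-1}$, and then apply Theorem \ref{tise} with $u=a_1$. The only difference is that you spell out the verification that $\dgr(T(p))=\dgr(p)-1$ and the identification of $(r_n)_n$ with $(r_{n,u})_n$, which the paper leaves implicit.
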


\begin{proof}
Define the operator $T$ by linearity from $T(r_n)=r_{n-1}$, $n\ge 1$, and $T(1)=0$.
Then $T^i(\hat p_n)(x)=\sum_{j=0}^n\theta ^n_jr_{j-i}(x)$, and since $r_{n}(a_1)=0$, $n\ge 1$, we get $T^i(p_n)(a_1)=\theta ^n_i$. It is enough now to apply Theorem \ref{tise}.
\end{proof}

Here it is an illustrative example of how to use this Corollary to get a Selberg type sum for Racah polynomials.

\subsection{Racah polynomials}\label{Racsec}
Let $N,\beta,\gamma, \delta$ be real numbers satisfying that $N$ is a positive integer and $\delta ,\gamma ,-\beta+\gamma \not = -1,\cdots ,-N$,
$\beta +\delta \not =-1,\cdots , -2N-1$. Define the numbers $\sigma_n$ and the polynomials $\tau_n, r_n$, $0\le n \le N$, by
\begin{align*}
\sigma_n&=\frac{(-1)^n}{(-N)_n(\beta+\delta+1)_n(\gamma+1)_nn!},\\
\tau_n(x)&=(-1)^n\prod_{i=0}^{n-1}(x-i(\gamma +\delta +1+i)),\\
r_n(x)&=\sigma_n(-1)^n\tau_n(x).
\end{align*}
Write $\lambda(x)=x(x+\gamma+\delta+1)$ and for $0\le n\le N$ consider the monic Racah polynomials (see \cite{KLS}, pp. 190-196) defined by
\begin{equation}\label{defrc}
\hat R_n^{N,\beta,\gamma,\delta}(\lambda(x))=\frac{(-1)^n}{n!\sigma_n(\beta+n-N)_n}\pFq{4}{3}{-n,n+\beta-N,-x,x+\gamma+\delta+1}
{-N,\beta+\gamma+1,\gamma+1}{1}.
\end{equation}

The relationship of the polynomials  $(\hat R_n^{N,\beta,\gamma,\delta})_n$ with the Racah polynomials $(r_n^{\alpha ,\beta,\gamma,\delta})_n$ defined in Section
\ref{5.4.2rw} is
$$
\hat R_n^{N,\beta,\gamma,\delta}(x)=n!r_n^{N,\beta,\gamma,\delta}(x).
$$
The polynomials $(\hat R_n^{N,\beta,\gamma,\delta})_n$ are orthogonal with respect to the discrete measure
$$
\sum_{x=0}^N\frac{(-N)_x(\beta+\delta+1)_x(\gamma+1)_x(\gamma+\delta+1)_x((\gamma+\delta+3)/2)_x}
{N +\gamma+\delta+2)_x(-\beta+\gamma+1)_x((\gamma+\delta+1)/2)_x(\delta+1)_xx!}\delta_{\lambda(x)}.
$$
The norm of the $n$-th monic Racah polynomial is
$$
\Vert \hat R_n^{N,\beta,\gamma,\delta}\Vert ^2=\frac{(-1)^nM(\beta-\gamma -N)_n(-\delta-N)_n(\beta+1)_n}{\sigma_n(\beta-N+1)_{2n}(\beta-N+n )_n},
$$
where $M=\frac{(-\beta)_N(\gamma+\delta+2)_N}{(-\beta+\gamma+1)_N(\delta +1)_N}$.

A simple computation shows that $\tau_n(\lambda(x))=(-x)_n(x+\gamma+\delta+1)_n$. Hence, (\ref{defrc}) can then be rewritten as
$$
\hat R_n^{N,\beta,\gamma,\delta}(\lambda(x))=\frac{(-1)^n}{n!\sigma_n(\beta+n-N)_n}\sum_{j=0}^n(-n)_j(n+\beta-N)_jr_j(x)
$$
With the notation of Corollary \ref{corf}, we have
$$
\theta^n_j=\frac{(-1)^n(-n)_j(n+\beta-N)_j}{n!\sigma _n(\beta+n-N)_n}.
$$
Using standard determinant techniques we get
$$
\det \left( \theta ^{m+j-1}_{i-1}\right)_{i,j=1}^n=(-1)^{mn}\prod_{j=0}^{n-1}\frac{j!}{(m+j)!\sigma_{m+j}(\beta+m+j-N)_{m}}.
$$
Hence Corollary \ref{corf} gives the identity
\begin{align*}
\sum_{x1,\cdots, x_m=n}^N&\Lambda ^2(\xx _\lambda )\prod_{j=1}^m\frac{(-N,\beta+\delta+1,\gamma+1,\gamma+\delta+3)/2)_{x_j}(\gamma+\delta+1)_{x_j+n}}
{(N +\gamma+\delta+2,-\beta+\gamma+1,(\gamma+\delta+1)/2,\delta+1)_{x_j}(x_j-n)!}\\&=
(-1)^{mn}M^m\prod_{j=0}^{m-1}\frac{(j+1)!(\beta -\gamma-N,-\delta -N,\beta+1)_j(-N,\beta+\delta+1,\gamma+1)_{n+j}}
{(\beta -N+1)_{2j}(\beta-N+j)_j(\beta-N+m+j)_n},
\end{align*}
where $\xx _\lambda=(\lambda(x_1),\cdots, \lambda(x_m))$, and $(u_1,\cdots ,u_k)_j=(u_1)_j\cdots (u_k)_j$.

Similar versions can be get for the other Racah weights (see \cite{KLS}, pp. 190).

The above identity for Racah polynomials is a particular case of Theorem 3.1 of \cite{Schl}
(which it is itself a special case of a sum that was originally conjectured by Ole Warnaar \cite{Waa}
and subsequently proved by Hjalmar Rosengren \cite{HRos}) which gives an extension of Frenkel and Turaev's ${}_{10}V_9$ summation formula:
it is the specialization $p\to 0$, followed by $d\to\infty$, and then $m\mapsto N-n$, $a\mapsto q^{n+(\gamma+\delta+1)/2}$,
$b\mapsto \beta+\delta+1$, $c\mapsto \gamma+1$, and finally $q\to 1$.

\section{Constant term identities}
In this Section, we show that for certain operators $T$ and certain families of orthogonal polynomials $(p_n)_n$, one (or both) of the determinants in (\ref{mrby}) can be rewritten as the constant term of certain multivariate Laurent expansions.
Some of them are generalizations of Morris identity (\ref{mocti}) for $k=1$. We next consider illustrative examples of this for Meixner, Charlier and ultraspherical polynomials.

\subsection{Meixner polynomials}
For $a\not = 0,1$, consider as in Section \ref{5.m.4} the Meixner polynomials $(m_n^{a,c})_n$ defined by the generating function
\begin{equation}\label{mfg}
\left( 1-\frac{z}{a}\right) ^x(1-z)^{-x-c}=\sum_{n=0}^\infty m_n^{a,c}(x)z^n.
\end{equation}
Summing up rows we have
\begin{equation}\label{mds7.2}
\det\left(\Delta^{i-1}(m_{m+j-1}^{a,c}(x))\right)_{i,j=1}^n=
\det\left(m_{m+j-1}^{a,c}(x+i-1)\right)_{i,j=1}^n.
\end{equation}
Using Theorem \ref{mti} and taking into account the computations in Section \ref{ss5.2.2} we get
\begin{equation}\label{midu}
\det\left(\Delta^{i-1}(m_{m+j-1}^{a,c}(x))\right)_{i,j=1}^n=C_{n,m}\det\left(m_{n+i-1}^{a,-c-n-i-j+3}(-x+j-1)
\right)_{i,j=1}^m,
\end{equation}
where
$$
C_{n,m}=\frac{(-1)^{nm+\binom{m}{2}+\binom{n}{2}}(1-a)^{\binom{n}{2}}}{a^{\binom{n}{2}-\binom{m}{2}}}.
$$
The generating function (\ref{mfg}) gives for the Meixner polynomials the integral representation
\begin{equation}\label{mieup}
m_n^{a,c}(x)=\frac{1}{2\pi \imath }\int_C\frac{\left( 1-\frac{z}{a}\right) ^x(1-z)^{-x-c}}{z^{n+1}}dz,
\end{equation}
where the function $f_x(z)=\left( 1-\frac{z}{a}\right) ^x(1-z)^{-x-c}$ is taken so that it is analytic at $z=0$ and $f_x(0)=1$, and $C$ is a circle with center at $z=0$, positively oriented and such that $f_x(z)$ is analytic in a neighborhood of $C$ and in its interior.
Using this integral representation, we can rewrite the determinant in the left hand side of (\ref{midu}) in terms of the constant term in the Laurent expansion of certain multivariate analytic function.

Indeed, using (\ref{mieup}) the determinant in the right hand side of (\ref{mds7.2}) can be rewritten as
$$
\frac{1}{(2\pi \imath)^n}\int _C \cdots \int _C\det\left(\frac{\left( 1-\frac{z_j}{a}\right) ^{x+i-1}(1-z_j)^{-x-i-c+1}}{z_j^{m+j}}\right)_{i,j=1}^n d\zz ,
$$
where $d\zz =dz_1\cdots dz_n$. This multiple integral can be computed as follows
\begin{align*}
\int _C \cdots \int _C&\det\left(\frac{\left( 1-\frac{z_j}{a}\right) ^{x+i-1}(1-z_j)^{-x-i-c+1}}{z_j^{m+j}}\right)_{i,j=1}^n d\zz \\&=
\int _C \cdots \int _C\prod_{j=1}^n\frac{\left( 1-\frac{z_j}{a}\right)^{x}(1-z_j)^{-x-c}}{z_j^{m+j}}\det\left(\frac{\left( 1-\frac{z_j}{a}\right)^{i-1}}{(1-z_j)^{i-1}}\right)_{i,j=1}^n d\zz \\&=(-1)^{\binom{n}{2}}\int _C \cdots \int _C\prod_{j=1}^n\frac{\left( 1-\frac{z_j}{a}\right)^{x}}{(1-z_j)^{x+c}z_j^{m+j}}\prod_{1\le i<j\le n}\frac{(1-a)(z_i-z_j)}{a(1-z_i)(1-z_j)} d\zz \\&=
\frac{(a-1)^{\binom{n}{2}}}{n!a^{\binom{n}{2}}}\int _C \cdots \int _C\prod_{j=1}^n\frac{\left( 1-\frac{z_j}{a}\right)^{x}}{(1-z_j)^{x+c+n-1}z_j^{m+n}}\prod_{1\le i<j\le n}(z_i-z_j)^2 d\zz,
\end{align*}
where we have used the version of Lemma \ref{llhi} for a contour integral.

The Residue Theorem then gives that the last integral is equal to
$$
(2\pi\imath)^n\mbox{C.T.$\vert _{\zz =0}$} \prod_{j=1}^n\frac{\left( 1-\frac{z_j}{a}\right)^{x}}{(1-z_j)^{x+c+n-1}z_j^{m+n-1}}\prod_{1\le i<j\le n}(z_i-z_j)^2,
$$
where if $F(\zz )$ is a multivariate meromorphic function $F(\zz )$ at $z_1=\cdots=z_n=0$, we denote by $\mbox{C.T.$\vert _{\zz =0}$}F(\zz )$ the constant term in the Laurent expansion at $z_1=\cdots=z_n=0$ of $F(\zz )$.

Using (\ref{midu}), we finally get
\begin{align*}
\mbox{C.T.$\vert _{\zz =0}$} \prod_{j=1}^n&\frac{\left( 1-\frac{z_j}{a}\right)^{x}}{(1-z_j)^{x+c+n-1}z_j^{m+n-1}}\prod_{1\le i<j\le n}(z_i-z_j)^2\\ &=(-1)^{\binom{m}{2}+\binom{n}{2}+nm}n!a^{\binom{m}{2}} \det\left(m_{n+i-1}^{a,-c-n-i-j+3}(-x+j-1) \right)_{i,j=1}^m.
\end{align*}

\subsection{Charlier polynomials}
Consider the Charlier polynomials $(c_n^{a})_n$ defined by the generating function
\begin{equation}\label{chfg}
( 1+z) ^xe^{-az}=\sum_{n=0}^\infty c_n^{a}(x)z^n.
\end{equation}
A straightforward computation shows that
$$
c_n^{a}(x)=\hat c_n^{a}(x)/n!,
$$
where $\hat c_n^{a}(x)$ is the $n$-th monic Charlier polynomial defined by (\ref{chpol}).

Proceeding as in the previous Section (using Theorem \ref{mti} and the computations in Section \ref{ss5.2.1}),
we get
\begin{equation}\label{chidu}
\det\left(c_{m+j-1}^{a}(x+i-1)\right)_{i,j=1}^n=(-1)^{nm}\det\left(c_{n+i-1}^{-a}(-x+j-1)
\right)_{i,j=1}^m.
\end{equation}
The generating function (\ref{chfg}) gives for the Charlier polynomials the integral representation
\begin{equation}\label{chieup}
c_n^{a}(x)=\frac{1}{2\pi \imath }\int_C\frac{(1+z)^xe^{-az}}{z^{n+1}}dz,
\end{equation}
where the function $f_x(z)=(1+z)^xe^{-az}$ is taken so that it is analytic at $z=0$ and $f_x(0)=1$, and
$C$ is a circle with center at $z=0$, positively oriented and such that $f_x(z)$ is analytic in a neighborhood of $C$ and its interior.

Using this integral representation and proceeding as in the previous Section, we can rewrite the determinant in the left hand side of (\ref{chidu}) in terms of the constant term in the Laurent expansion of certain multivariate analytic function. In doing that, we get
$$
\mbox{C.T.$\vert _{\zz =0}$} \prod_{j=1}^ne^{-az_j}\frac{(1+z_j)^{x}}{z_j^{m+n-1}}\prod_{1\le i<j\le n}(z_i-z_j)^2=
(-1)^{\binom{n}{2}+nm}n!\det\left(c_{n+i-1}^{-a}(-x+j-1)\right)_{i,j=1}^m.
$$

\subsection{Ultraspherical polynomials I}
Consider, as in Section \ref{ss5.3.3}, the ultraspherical (o Gegenbauer) polynomials $(C_n^\lambda)_n$, $\lambda \not =0,-1,-2,\cdots $, defined by the generating function
\begin{equation}\label{ufg2}
(1-2xz+z^2)^{-\lambda}=\sum_{n=0}^\infty C_n^\lambda(x)z^n.
\end{equation}
Consider the operator $T (C_n^\lambda)=C_{n-1}^\lambda$. Using Theorem \ref{mti} and the computations in Section \ref{ss5.3}
and \ref{ss5.3.3} we get
\begin{equation}\label{idu}
\det(C_{m+j-i}^\lambda(x))_{i,j=1}^n=(-1)^{nm}\det(C_{n+i-j}^{-\lambda}(x))_{i,j=1}^m.
\end{equation}
The generating function (\ref{ufg2}) gives for the ultraspherical polynomials ($\lambda \not =0$) the integral representation
\begin{equation}\label{ieup}
C_n^\lambda(x)=\frac{1}{2\pi \imath }\int_C\frac{(1-2xz+z^2)^{-\lambda}}{z^{n+1}}dz,
\end{equation}
where the function $f_x(z)=(1-xz+z^2)^{-\lambda}$ is taken so that it is analytic at $z=0$ and $f_x(0)=1$, and
$C$ is a circle with center at $z=0$, positively oriented and such that $f_x(z)$ is analytic in a neighborhood of $C$ and in its interior. Notice that if we write $C_n^\lambda(x)=0$ for $n<0$, the integral representation (\ref{ieup}) also holds for $n<0$.

Using this integral representation and proceeding as in the previous Section, we can rewrite  the determinant in the left hand side of (\ref{idu}) in terms of the constant term in the Laurent expansion of certain multivariate analytic function. In doing that, we get for $\lambda\not =0,-1,-2, \cdots $
\begin{equation}\label{umf}
\mbox{C.T.$\vert _{\zz =0}$} \prod_{j=1}^n\frac{(1-2xz_j+z_j^2)^{-\lambda}}{z_j^{m+n-1}}\prod_{1\le i<j\le n}(z_i-z_j)^2=
(-1)^{\binom{n}{2}+nm}n!\det(C_{n+j-1}^{-\lambda}(x))_{i,j=1}^m.
\end{equation}
Actually, this formula holds for $\lambda\not =0$. Indeed, for $\lambda =-k$, with $k$ a positive integer, the formula (\ref{ufg2}) allows to
define the polynomials $C_n^{-k}$, $n\ge 0$. For $n\ge 2k$ we have $C_n^{-k}=0$. It is not difficult to see that for fixed $n,m$, the determinant in the right hand side of (\ref{umf}) is a polynomial in $\lambda$ vanishing at $\lambda =0$. Since the left hand side of (\ref{umf}) is a continuous function in $\lambda$ and they are equal for $\lambda\not =0,-1,-2, \cdots $, they are actually equal for all values of $\lambda \not =0$. For $\lambda =0$, the left hand side of (\ref{umf}) is then 0 (this happens because the Chebyshev polynomials of the first kind $C_n^0$ do not satisfy the generating function (\ref{ufg2})).

For $x=0$, the determinant in the right hand side of (\ref{umf}) can be explicitly computed. In doing that, one has
\begin{align*}
\mbox{C.T.$\vert _{\zz =0}$}& \prod_{j=1}^n\frac{(1+z_j^2)^{-\lambda}}{z_j^{m+n-1}}\prod_{1\le i<j\le n}(z_i-z_j)^2
\\ & = (-1)^{\binom{n}{2}+nm}n!
\begin{cases}
0,& \mbox{if $nm$ is odd,}\\ \omega (n,m,\lambda ),&\mbox{if $nm$ is even and $n\le m$,}
\\ \omega (m,n,-\lambda ),&\mbox{if $nm$ is even and $m\le n$,}
\end{cases}
\end{align*}
where for $n\le m$ we define
$$
\omega (n,m,\lambda )=(-1)^{n\left[ \frac{m+1}{2}\right] }\prod_{j=0}^{n-1}\frac{[j/2]!}{[\frac{m+j}{2}]!}\prod_{j=0}^{[\frac{m+1}{2}]-1}(\lambda +j)^{\min (n,m-2j)}(\lambda -j-1)^{\max (n-2j-2,0)}.
$$

\subsection{Ultraspherical polynomials II}
If we consider the derivative instead of the operator $T$ considered in the previous Section,
the integral representation (\ref{ieup}) leads to other interesting constant term identity.

For ultraspherical polynomials we have the formula
$$
(C_n^\lambda)'(x)=2\lambda C_{n-1}^{\lambda+1}(x).
$$
This gives
\begin{equation}\label{ds7.2}
\det\left((C_{m+j-1}^\lambda (x))^{(i-1)}\right)_{i,j=1}^n=2^{\binom{n}{2}}\left(\prod_{i=1}^n(\lambda)_{i-1}\right)\det\left(C_{m+j-i}^{\lambda +i-1}(x)\right)_{i,j=1}^n.
\end{equation}
Using Theorem \ref{mti} and the computations in Section \ref{ss5.1} we get for $\lambda \not = -1/2,-1,-3/2,-2,\cdots $,
\begin{equation}\label{idu2}
\det\left((C_{m+j-1}^\lambda (x))^{(i-1)}\right)_{i,j=1}^n=C_{n,m}\det\left(\frac{(1-x)^{n+i-1}P_{n+i-1}^{2\lambda+j-1,-n-i-\lambda+1/2}\left(\frac{x+3}{x-1}\right)}
{(2\lambda+2j-1)_{n+i-j}}\right)_{i,j=1}^m,
\end{equation}
where
$$
C_{n,m}=(-1)^{nm+\binom{m}{2}}2^{\binom{n+m}{2}}\frac{\prod_{j=1}^{n+m-1}(\lambda)_j}{\prod_{j=1}^{m-1}(2\lambda)_j}.
$$
Using the integral representation (\ref{ieup}), and proceeding as in the previous Section we can write the determinant in the left hand side of (\ref{idu2}) in terms of the constant term in the Laurent expansion of certain multivariate analytic function. In doing that, we get for $\lambda \not = -1/2,-1,-3/2,-2,\cdots $,
\begin{align*}
&\mbox{C.T.$\vert _{\zz =0}$} \prod_{j=1}^n\frac{(1-2xz_j+z_j^2)^{-\lambda-n+1}}{z_j^{m}}\prod_{1\le i<j\le n}(z_i-z_j)^2\left(1-\frac{1}{z_iz_j}\right)\\ &\quad =
(-1)^{nm+\binom{m}{2}}n!2^{nm+\binom{m}{2}}\left(\prod_{j=0}^{m-1}\frac{(\lambda)_{n+j}}{(2\lambda)_j}\right)
\det\left(\frac{(1-x)^{n+i-1}P_{n+i-1}^{2\lambda+j-1,-n-i-\lambda+1/2}\left(\frac{x+3}{x-1}\right)}
{(2\lambda+2j-1)_{n+i-j}}\right)_{i,j=1}^m.
\end{align*}
For $x=1$, the determinant in the right hand side of the above identity can be explicitly computed to get
\begin{align*}
\mbox{C.T.$\vert _{\zz =0}$}& \prod_{j=1}^n\frac{(1-z_j)^{-2\lambda-2n+2}}{z_j^{m}}\prod_{1\le i<j\le n}(z_i-z_j)^2\left(1-\frac{1}{z_iz_j}\right)\\ &=
\frac{n!2^{2nm+\binom{m}{2}}\left(\prod_{j=1}^{m-1}(2\lambda+2j-1)^{m-j}\right)}{\left(\prod_{j=1}^{[m/2]}(2\lambda+n+2j-1)_{[\frac{m-1}{2}]+
[\frac{m+1}{2}]}\right)}
\prod_{j=0}^{m-1}\frac{j!(\lambda)_{n+j}(\lambda +j+1/2)_n}{(n+j)!(2\lambda)_j(2\lambda+2j+1)_n}.
\end{align*}

\bigskip
\noindent
\textit{Acknowledgement} The author would like to thank Doron Zeilberger for pointing him out  that the conjecture
\ref{c1.1} was related to certain Selberg and Morris type identities, and to M. Schlosser for pointing him out the connection of the Racah identity
in Section \ref{Racsec} to an extension of Frenkel and Turaev's ${}_10V9$ summation formula.

     \end{document}